\newtheorem{thm}{Theorem}[section]
\newtheorem{prop}[thm]{Proposition}
\newtheorem{lemma}[thm]{Lemma}
\newtheorem{cor}[thm]{Corollary}
\newtheorem{defn}[thm]{Definition}
\newtheorem{example}[thm]{Example}
\newtheorem{remark}[thm]{Remark}
\newtheorem{conjecture}[thm]{Conjecture}
\newtheorem{prop-defn}[thm]{Proposition/Definition}
 \newcommand{\ba}{\begin{eqnarray}}
   \newcommand{\na}{\end{eqnarray}}
   \newcommand{\ban}{\begin{eqnarray*}}
   \newcommand{\nan}{\end{eqnarray*}}
\newcommand{\bC}{{\mathbb C}}
\newcommand{\bP}{{\mathbb P}}
\newcommand{\bR}{{\mathbb R}}
\newcommand{\bZ}{{\mathbb Z}}
\newcommand{\cO}{{\mathcal O}}
\newcommand{\tspec}{\textrm{Spec}}
  \newcommand{\<}{\langle}
  \renewcommand{\>}{\rangle}
\newcommand{\suml}{\sum\limits}
\newcommand{\prodl}{\prod\limits}
\newtheorem*{conjO}{Conjecture $\cO$}
\newtheorem*{GammaI}{Gamma conjecture I}
\begin{document}{\allowdisplaybreaks[4]

\title{Gamma conjecture I for del Pezzo surfaces}

\author{Jianxun Hu }
\address{School of Mathematics, Sun Yat-sen University, Guangzhou 510275, P.R. China}

\email{stsjxhu@mail.sysu.edu.cn}
\thanks{ 
 }

\author{Huazhong Ke}
\address{School of Mathematics, Sun Yat-sen University, Guangzhou 510275, P.R. China}
\email{kehuazh@mail.sysu.edu.cn}
\thanks{ 
 }

\author{Changzheng Li}
 \address{School of Mathematics, Sun Yat-sen University, Guangzhou 510275, P.R. China}
\email{lichangzh@mail.sysu.edu.cn}

\author{Tuo Yang }
\address{Carrikk School of Management, Boston College, 140 Commonwealth Avenue, Chestnut Hill, MA 02467}
\email{yanganj@bc.edu}
\thanks{ 
 }

\date{
      }


 \keywords{Del Pezzo surfaces. Conjecture $\mathcal{O}$. Gamma conjecture I. Fano manifolds. Generalized Perron-Frobenius theorem. Quantum Lefschetz principle.}


\begin{abstract}
    Gamma conjecture I and the underlying Conjecture $\mathcal{O}$   for Fano manifolds were proposed by Galkin, Golyshev and Iritani recently.   We show that both conjectures  hold for all two-dimensional Fano manifolds. We prove Conjecture $\mathcal{O}$  by deriving a generalized Perron-Frobenius theorem on eigenvalues of real matrices and   a vanishing result of certain Gromov-Witten invariants for del Pezzo surfaces. We prove Gamma conjecture I by applying mirror techniques proposed by Galkin-Iritani together with the study of Gamma conjecture I for weighted projective spaces.
  \end{abstract}

\maketitle

\section{Introduction}


{
Let $X$ be a Fano manifold, namely a compact manifold whose anti-canonical line bundle is ample. Denote by $QH^*(X)$ the small quantum cohomology of $X$ and $J_X(t)$  Givental's small $J$-function in Gromov-Witten theory of $X$. In \cite{GGI}, Galkin, Golyshev and Iritani proposed the so-called  Gamma conjectures and the underlying conjecture $\cO$  for any Fano manifold $X$. Conjecture  $\mathcal{O}$ concerns with eigenvalues of a linear operator on the small quantum cohomology ring $QH^*(X)$. Gamma conjecture consists of Gamma conjecture I and II. Gamma conjecture I cares about the asymptotic expansion of Givental's small $J$-function $J_X(t)$ by assuming  conjecture $\mathcal{O}$ first. Gamma conjecture II was also proposed in \cite{GGI} as a refinement of a part of the original Dubrovin conjecture \cite{Du1998}, independent of the new formulation in \cite{Du2013, CDG}. In this paper, we will focus on Conjecture  $\mathcal{O}$ and Gamma conjecture I.

}

 To be more precise, the quantum cohomology ring $QH^*(X)$ is a  deformation
 of the classical cohomology ring $H^*(X)=H^*(X,\mathbb{Q})$ by incorporating genus zero, three-point Gromov-Witten invariants of $X$. As vector spaces,
 we have  $QH^*(X)=H^*(X)\otimes \mathbb{Q}[q_1,\cdots, q_m]$, where  $m$ is the second Betti number of $X$ and $q_i$'s are quantum variables parameterizing a basis of effective curve classes in $H_2(X, \mathbb{Z})$.
 The quantum multiplication by the first Chern class $c_1(X)$ of $X$ induces a linear operator $\hat c_1=c_1(X)\star_{\mathbf{q}=\mathbf{1}}$ on the even part  $H^\bullet(X):=H^{\rm ev}(X)=QH^{\rm ev}(X)|_{\mathbf{q}=(1,\cdots, 1)}$, which is a vector space of finite dimension.
 The  so-called \textit{Property $\mathcal{O}$}, introduced for general Fano manifolds,    consists of two parts. The first half says that the spectral radius $\rho(\hat c_1)=\max\{|\lambda|: \lambda\mbox{ is an eigenvalue of } \hat c_1\}$ itself is a eigenvalue of $\hat c_1$ of multiplicity one. {
 The second half says that $|\lambda|\neq \rho(\hat c_1)$ whenever $\lambda\neq \rho(\hat c_1)$ is an eigenvalue of $\hat c_1$ in the case of   Fano manifolds  of Fano index one (see section 2.1 for  general cases).}
Galkin, Golyshev and Iritani \cite{GGI} made the following conjecture with the name $\mathcal{O}$ indicating a deep relation with homological mirror symmetry.
\begin{conjO}
   Every Fano manifold satisfies   Property $\mathcal{O}$.
\end{conjO}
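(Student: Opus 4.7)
The statement asserts Property $\mathcal{O}$ for \emph{every} Fano manifold, and is open in full generality; my plan is to outline a strategy that reduces the problem, as far as possible, to a Perron--Frobenius type argument and to identify the essential new geometric input required at each stage. The overall framework is: produce a basis (or a closed convex cone) of $H^\bullet(X;\mathbb{R})$ on which $\hat c_1 = c_1(X)\star_{\mathbf{q}=\mathbf{1}}$ acts by a nonnegative (respectively cone-preserving) operator, verify the irreducibility and aperiodicity hypotheses, and then extract the two halves of Property $\mathcal{O}$ from the corresponding Perron--Frobenius type theorem.

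\emph{Case A: Fanos with an algebraic cellular decomposition.} The first sub-plan is to dispose of Fano manifolds admitting a Bia{\l}ynicki--Birula decomposition, in particular toric Fanos and rational homogeneous spaces $G/P$. Here $H^\bullet(X)$ carries a canonical basis (torus-fixed classes, Schubert classes) and the quantum Chevalley--Pieri formula forces the structure constants of $\hat c_1$ to be nonnegative. The classical Perron--Frobenius theorem then supplies a real nonnegative eigenvalue $\rho(\hat c_1)$ with a nonnegative eigenvector. Irreducibility follows because iterated quantum multiplication by $c_1$ connects every basis element to every other, and the aperiodicity required in the Fano index one half follows from a degree-shift argument. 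This reproduces the toric case (Galkin) and the homogeneous cases (Cheong--Li, Leung--Li, and others).

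\emph{Case B: general Fano manifolds.} When no such basis exists, the plan is to invoke a \emph{generalized} Perron--Frobenius theorem of Birkhoff--Vandergraft type: it suffices to construct a closed convex cone $\mathcal{K}\subset H^\bullet(X;\mathbb{R})$ with nonempty interior preserved by $\hat c_1$, together with primitivity of some iterate $\hat c_1^N$ on $\mathcal{K}$. This is the approach developed in the present paper in the two-dimensional case. For general $X$, the plan to produce $\mathcal{K}$ is to pass to the Galkin--Iritani mirror Landau--Ginzburg model: eigenvalues of $\hat c_1$ correspond to critical values of a Laurent superpotential $W$, and Property $\mathcal{O}$ becomes the assertion that $W$ has a unique critical point whose critical value is real, positive, and of largest modulus. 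One would construct $\mathcal{K}$ explicitly on a toric degeneration of $X$ (when such a degeneration is available), transport it to the generic fiber using flatness of the Dubrovin connection, and handle smooth Fano complete intersections in toric or homogeneous targets via the quantum Lefschetz principle.

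\emph{Main obstacle.} The decisive difficulty lies in Case B: for a Fano manifold with no toric or homogeneous model and no explicit mirror, there is no structural reason to expect positivity in any natural basis, and verifying cone-preservation and aperiodicity reduces to controlling signs of infinitely many genus-zero Gromov--Witten invariants. This is exactly why Conjecture $\mathcal{O}$ is open in general, and it forces a dimension-by-dimension approach: the two-dimensional case treated in the present paper rests on a sharp vanishing result for certain GW invariants of del Pezzo surfaces, and every higher-dimensional class of examples would demand an analogous vanishing theorem tailored to the cone used at that dimension. Completing the conjecture therefore reduces, within this plan, to an ambitious positivity/vanishing program in Gromov--Witten theory, which I do not see how to carry out uniformly; the realistic deliverable is a verification of Property $\mathcal{O}$ across all known classes of Fano manifolds, leaving a general proof contingent on future advances in GW positivity.
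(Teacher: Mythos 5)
There is a basic mismatch here: the statement you were asked about is Conjecture $\mathcal{O}$ itself, which is an open conjecture of Galkin--Golyshev--Iritani. The paper does not prove it and does not claim to; it proves only the two-dimensional case (Theorem \ref{thmconjO}, Property $\mathcal{O}$ for del Pezzo surfaces). Your proposal, by your own admission, is a research program rather than a proof, so as an argument for the stated assertion it has an irreparable gap --- exactly the gap that makes this a conjecture. That said, your outline should be measured against what the paper actually does in the case it can handle, and there the routes diverge. The paper does not construct an invariant convex cone, does not invoke a Birkhoff--Vandergraft type theorem, and does not use the mirror Landau--Ginzburg model at all in the proof of Property $\mathcal{O}$ (the mirror techniques enter only for Gamma conjecture I). Instead it writes the matrix $M_r$ of $\hat c_1$ in the explicit basis $\{\mathbbm{1}, c_1, E_1,\dots,E_r,[pt]\}$, proves a concrete generalized Perron--Frobenius theorem (Theorem \ref{genPFthm}: positive column sums plus some power $T^k$ irreducible and nonnegative force $\rho(T)$ to be a simple eigenvalue), and verifies these hypotheses after an elementary conjugation, the key geometric input being the vanishing theorem for descendant invariants with one primitive insertion (Theorem \ref{vanishingproperty}), which kills the off-diagonal entries in the primitive directions. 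The second half of Property $\mathcal{O}$ is handled by Proposition \ref{propuniquerho} (a row of positive entries in an irreducible nonnegative power rules out imprimitivity), not by a ``degree-shift argument.''

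Two further points in your sketch are overstated and would fail as written. First, in Case A you assert that the toric case of Conjecture $\mathcal{O}$ is settled by positivity in a torus-fixed basis and attribute it to Galkin; in fact Conjecture $\mathcal{O}$ for general toric Fano manifolds is not known, and Galkin's conifold-point result (and the Galkin--Iritani toric theorem quoted in this paper) concerns the B-model analogue of Property $\mathcal{O}$, which is an additional hypothesis, not a theorem. Indeed, the present paper shows that even for the toric surfaces $X_1,X_2,X_3$ no power of the naive matrix $\tilde M_r$ is nonnegative, which is precisely why the authors needed a new Perron--Frobenius statement and a good change of basis. Second, in Case B the identification of $\operatorname{Spec}(\hat c_1)$ with critical values of a Laurent superpotential, and the transport of a cone through a toric degeneration, presuppose mirror statements that are themselves unavailable for a general Fano manifold, so the reduction you describe is circular at the crucial step. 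Your closing diagnosis --- that everything hinges on sign/vanishing control of genus-zero invariants, supplied here by Theorem \ref{vanishingproperty} in dimension two --- is accurate, but it confirms that the general statement remains conjectural rather than proved.
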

 \noindent There have been complete classifications of Fano manifolds of small dimension.  Any one-dimensional Fano manifold is
isomorphic to the complex projective line $\bP^1$. Every two-dimensional Fano manifold, i.e.
a del Pezzo surface,   is either isomorphic to $\bP^1\times \bP^1$ or the blowup $X_r$ of $\bP^2$ at $r$ points in
general position $(0 \leq r\leq 8)$. As one main result of the present paper, we prove  conjecture $\mathcal{O}$ for $X_r (1\leq r\leq 8)$ (which are all of Fano index one), in addition to the known cases $\bP^2$ and $\bP^1\times \bP^1$. Namely, we show the following.
 \begin{thm}\label{thmconjO}
 Every del Pezzo surface satisfies Property  $\cO$.
\end{thm}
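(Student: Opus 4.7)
The plan is to prove Property $\cO$ for each blowup $X_r$ of $\bP^2$ at $r$ points in general position ($1 \leq r \leq 8$), since the cases $\bP^2$ and $\bP^1 \times \bP^1$ are already in the literature. I would analyze the matrix of the quantum multiplication operator $\hat c_1 = c_1(X_r)\star_{\mathbf{q}=\mathbf{1}}$ in a carefully chosen basis of the even cohomology $H^\bullet(X_r)$, which has dimension $r+3$. A natural first candidate is $\{1, H, E_1, \ldots, E_r, [\mathrm{pt}]\}$, where $H$ is pulled back from $\bP^2$ and the $E_i$ are exceptional classes; one may also prefer a basis of nef divisors or one adapted to the Mori cone so that the resulting matrix contains as few negative entries as possible. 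The classical piece is read off from $c_1(X_r) = 3H - \sum_i E_i$ and the intersection form on $X_r$; in particular, $c_1 \cup H = 3[\mathrm{pt}]$, $c_1 \cup E_i = [\mathrm{pt}]$, while $c_1 \cup 1 = c_1$ introduces negative entries, so the classical Perron--Frobenius theorem does not apply as is.

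Next I would decompose $\hat c_1$ into its classical part and its quantum correction, the latter being a sum over effective curve classes $d$ of three-point invariants $\langle c_1, \alpha, \beta \rangle^{X_r}_{0,d}$ for $\alpha, \beta$ basis elements. The key technical step is a vanishing result showing that a large family of these invariants is zero; such vanishings should follow from dimension counts, the divisor axiom, and genericity of the blowup points (for instance, the absence of rational curves in certain classes passing through the chosen configuration). Combined with the chosen basis, the resulting matrix of $\hat c_1$ should then satisfy the hypotheses of the generalized Perron--Frobenius theorem on real matrices proved earlier in the paper. That theorem would deliver both halves of Property $\cO$: simplicity of the maximal real eigenvalue $\rho(\hat c_1)$ and --- because each $X_r$ with $r \geq 1$ has Fano index one --- the strict inequality $|\lambda| < \rho(\hat c_1)$ for every other eigenvalue $\lambda$.

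The principal obstacle is establishing the second, strict-inequality half: for genuinely non-negative matrices the classical Perron--Frobenius theorem yields this via primitivity, but primitivity is unavailable here because the matrix contains negative entries coming from the self-intersections $E_i^2 = -1$. The generalized Perron--Frobenius theorem is designed to compensate for this, but only once the vanishing of the Gromov--Witten invariants tied to the exceptional directions pins down the correct sign pattern. A secondary, more routine difficulty is the growing complexity of the cases $r = 7, 8$: the Picard rank increases, the Mori cone acquires more $(-1)$-curves, and the set of potentially contributing curve classes expands, so the vanishing result must either be proved uniformly in $r$ or handled by a careful case analysis, even though the underlying mechanism remains the same.
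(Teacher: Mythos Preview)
Your high-level strategy matches the paper's: choose a good basis, control the sign pattern of the matrix of $\hat c_1$ via a vanishing result for Gromov--Witten invariants, and feed the result into the generalized Perron--Frobenius theorem. But two of your proposed mechanisms are off, and a third ingredient is missing.

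First, the vanishing result. The relevant statement is not that individual invariants $\langle c_1,\alpha,\beta\rangle_A$ vanish by dimension count or by absence of rational curves through generic points. What the paper proves and uses (Theorem~\ref{vanishingproperty}) is that \emph{sums} $\sum_A \langle \tau_a(\gamma)\prod_i\tau_{a_i}(\gamma_i)\rangle_A$ vanish whenever $\gamma$ lies in the primitive lattice $\mathrm{Prim}_r=c_1^\perp\subset H^2(X_r)$ and each $\gamma_i$ lies in the ambient part $\mathrm{Amb}_r=\langle \mathbbm1,c_1,[pt]\rangle$. The mechanism is the Weyl-group symmetry of del Pezzo surfaces: the Cremona involution and the transpositions of exceptional divisors preserve $c_1$ and the ambient insertions, so pairing a class $A$ with its image under the relevant involution kills the primitive pairing $\int_A\gamma$. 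Dimension counts and genericity alone will not produce this.

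Second, the generalized Perron--Frobenius theorem (Theorem~\ref{genPFthm}) only gives the \emph{first} half of Property~$\cO$: that $\rho$ is a simple eigenvalue. It says nothing about other eigenvalues on the circle $|\lambda|=\rho$. The paper obtains the second half from a separate primitivity argument (Proposition~\ref{propuniquerho}): once one shows that some power of the matrix has a row of strictly positive entries, the index of imprimitivity is forced to be $1$.

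Third, you should be aware that the naive basis $\{1,H,E_1,\dots,E_r,[pt]\}$ does \emph{not} work: the paper notes explicitly that no power of the resulting matrix is nonnegative even for $r=1$. For $r\ge 3$ the paper uses $\{1,c_1,E_1,\dots,E_r,[pt]\}$, and even then the matrix $M_r$ is not directly amenable: it is first conjugated by a carefully chosen near-identity matrix $P$ (depending on small positive parameters) so that the square of the conjugate has all entries strictly positive. This conjugation step, together with the explicit block shape of $M_r$ established in Proposition~\ref{propMatrixdelPezzo}, is what actually makes the generalized Perron--Frobenius theorem and the primitivity criterion applicable. Your outline does not yet contain this maneuver.
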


The formulation of Gamma conjecture I requires a bit more knowledge on the Gromov-Witten theory.
 One one hand, the restriction $J_X(t)$ of Givental's big $J$-function to the anti-canonical line is a (multi-valued) $H^*(X)$-valued function over $\mathbb{C}^*$ defined by incorporating genus zero, one-point Gromov-Witten invariants with gravitational   descendents.
 Under the assumption that $X$ satisfies conjecture $\mathcal{O}$, $J_X(t)$ admits an asymptotic expansion at the infinity, whose leading term $A_X \in H^\bullet(X)$ is unique up to a nonzero scalar and is called the \textit{principal asymptotic class} of $X$.
 On the other hand, the Gamma class $\hat \Gamma_X=\prod_{i=1}^{\dim X}\Gamma(1+x_i)$
    \cite{HKTY, Libg, Lu, Iri}  of $X$ is a  real characteristic class, defined by   the Chern root $x_1,\cdots, x_{\dim X}$ of the tangent bundle $TX$ of $X$ and
  Euler's $\Gamma$-function  $\Gamma(z)=\int_0^\infty e^tt^{z-1}dt$.
  It has the following expansion (see e.g. \cite{GGI}):
  $$\hat \Gamma_X =\exp\big(-C_{\rm eu}c_1(X)+\sum_{k=2}^\infty (-1)^k(k-1)!\zeta(k)\mbox{ch}_k(TX)\big)  \in H^\bullet (X, \mathbb{R})$$
where $C_{\rm eu}=0.5772156...$ is the Euler-Mascheroni   constant, $\zeta(k)=\sum_{n=1}^\infty{1\over n^k}$ is the value of Riemann zeta function at $k$, and  $\mbox{ch}_k$ denotes the $k$-th Chern character. 
Gamma conjecture I relates Gromov-Witten invariants (which are deformation invariants but not topological invariants) to the topology of $X$, in the way that the Gamma class of $X$ coincides with the principal asymptotic class of $X$ up to normalization by a scalar; namely
\begin{GammaI}
  Let$X$ be a Fano manifold   satisfying Property $\mathcal{O}$. Then there exists a constant $C\in \mathbb{C}$ such that $\hat\Gamma_X= C\cdot A_X$.
\end{GammaI}
\noindent We refer to \cite{GGI, GaIr} for the various equivalent formulations of the above conjecture.
As another main result of the present paper, we show the following.
\begin{thm}\label{thmGammconjI}
Gamma conjecture I holds for any del Pezzo surface.
\end{thm}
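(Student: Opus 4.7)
The plan is to establish Gamma conjecture I surface by surface. Theorem~\ref{thmconjO} guarantees that for every del Pezzo surface $X$ the principal asymptotic class $A_X\in H^\bullet(X)$ is well-defined, so the task reduces to identifying $A_X$, up to a nonzero scalar, with $\hat\Gamma_X$. The cases $X=\bP^2$ and $X=\bP^1\times\bP^1$ are already established in \cite{GGI} (the first by the direct hypergeometric computation of $J_{\bP^2}$, the second by the product structure), so the remaining task is the blowups $X_r$ for $1\le r\le 8$. For each such $X_r$ the strategy is to relate $X_r$ to a suitable weighted projective space $\bP(w)=\bP(w_0,\dots,w_n)$ and then to transport Gamma conjecture I from $\bP(w)$ to $X_r$ via the mirror and quantum Lefschetz techniques of \cite{GaIr}.

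The first step is to prove Gamma conjecture I for the weighted projective spaces $\bP(w)$ that will appear as ambient spaces. Since $J_{\bP(w)}(t)$ admits an explicit hypergeometric expression in terms of Pochhammer symbols, its asymptotic expansion at $t\to\infty$ can be extracted by Stirling's formula combined with standard $\Gamma$-function identities; the leading coefficient then matches $\hat\Gamma_{\bP(w)}$, interpreted in the orbifold sense where needed. This extends the corresponding argument for ordinary projective spaces in \cite{GGI} to the weighted setting and serves as the essential input.

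The second step is to realize each $X_r$ appropriately. For $r\le 3$ the surface $X_r$ is itself a toric Fano manifold, and a direct computation on the mirror Landau-Ginzburg model in the spirit of \cite{GaIr} applies. For $r\ge 4$ one realizes $X_r$ as a smooth Fano hypersurface (or complete intersection) in a suitable weighted projective space, e.g.\ $X_8$ as a sextic in $\bP(1,1,2,3)$ and $X_7$ as a quartic in $\bP(1,1,1,2)$, with similar realizations for the intermediate cases. The quantum Lefschetz principle then expresses $J_{X_r}(t)$ as a hypergeometric twist of $J_{\bP(w)}(t)$ by the $\Gamma$-factors of the divisors cutting out $X_r$. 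The key compatibility, observed by Galkin-Iritani, is that these $\Gamma$-factors are precisely the ones needed to convert $\hat\Gamma_{\bP(w)}$ into $\hat\Gamma_{X_r}$ via the adjunction formula, so the identification of the leading asymptotic term of $J_{X_r}$ with $\hat\Gamma_{X_r}$ descends from the corresponding identification on $\bP(w)$.

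The main obstacle will be ensuring that, after the quantum Lefschetz twist, the stationary-phase analysis continues to select the correct dominant eigenvalue $\rho(\hat c_1(X_r))$ rather than a subdominant one, with no accidental cancellations spoiling the leading coefficient. Property $\cO$ from Theorem~\ref{thmconjO}, specifically the strict inequality between $\rho(\hat c_1(X_r))$ and the moduli of the remaining eigenvalues in this Fano index one setting, is indispensable here: it guarantees that the contour of steepest descent picks out a unique dominant exponential, whose coefficient is unambiguously the principal asymptotic class $A_{X_r}$. A subsidiary technical point is matching the orbifold Gamma class of $\bP(w)$ to the smooth Gamma class of the smooth hypersurface $X_r$, which requires verifying that the twisted sectors of $\bP(w)$ do not contribute to the leading asymptotics in the cases at hand. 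Once these verifications are in place, the scalar $C$ in $\hat\Gamma_{X_r}=C\cdot A_{X_r}$ can be pinned down by a single evaluation (for instance of the top-degree component) and Theorem~\ref{thmGammconjI} follows.
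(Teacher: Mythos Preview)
Your overall architecture matches the paper's: handle the toric surfaces $X_1,X_2,X_3$ (and $\bP^2,\bP^1\times\bP^1$) via the mirror Landau--Ginzburg criterion of \cite{GaIr}, and handle the non-toric $X_r$ by realizing them as complete intersections in ambient spaces of Picard rank one and transporting Gamma conjecture I down via quantum Lefschetz. The paper also proves an orbifold version of Gamma conjecture I for the relevant weighted projective spaces (Theorem \ref{proportionforweightedprojectivespaces}) to treat $X_7,X_8$, as you suggest; for $X_4,X_5,X_6$ it uses $Gr(2,5)$, $\bP^4$, $\bP^3$ rather than weighted projective spaces, but this is a minor variation.

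There is, however, a genuine gap in your sketch. You write that ``the quantum Lefschetz principle then expresses $J_{X_r}(t)$ as a hypergeometric twist of $J_{\bP(w)}(t)$,'' but quantum Lefschetz only controls the \emph{ambient part} of $J_{X_r}$, i.e.\ the component lying in $\iota^*H^\bullet(\bP(w))$. The $J$-function of $X_r$ a priori also has a component in the primitive cohomology $\mbox{Prim}_r=\{\gamma\in H^2(X_r):c_1\cup\gamma=0\}$, and nothing you have said rules out this component contributing to the leading asymptotics (it could in principle produce a nonzero piece of $A_{X_r}$ orthogonal to $\hat\Gamma_{X_r}$, killing the proportionality). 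For hypersurfaces of dimension $\geq 3$ this vanishing is automatic (Lemma \ref{primitivevanishing}, via Lee--Pandharipande reconstruction), but $X_r$ is a surface, so that lemma does not apply at the final step. The paper closes this gap with Theorem \ref{vanishingproperty}, a nontrivial vanishing result
\[
\sum_{A}\big\langle \tau_a(\gamma)\,\tau_{a_1}(\gamma_1)\cdots\tau_{a_m}(\gamma_m)\big\rangle_A=0
\quad\text{for }\gamma\in\mbox{Prim}_r,\ \gamma_i\in\mbox{Amb}_r,
\]
proved using the Cremona and $S_r$ symmetries of the Gromov--Witten invariants of $X_r$ together with TRR and the divisor axiom. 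This is one of the two main technical inputs of the paper and is indispensable for the quantum Lefschetz step to yield the full $J_{X_r}$ rather than just its ambient part. Your proposal does not mention it, and your list of ``main obstacles'' (dominant eigenvalue selection, twisted-sector matching) does not cover it.
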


There have been lots of studies  on  conjecture $\mathcal{O}$ recently. It has been proved for flag varieties $G/P$ of arbitrary Lie type by Cheong and the third named author \cite{ChLi}, and was proved earlier in the special cases of  cominuscule Grassmannians of classical Lie types \cite{Riet, GaGo, Che}.
  Conjecture $\mathcal{O}$ has also been proved recently  for Fano 3-folds of Picard rank one \cite{GoZa}, Fano  complete intersections in projective spaces \cite{GaIr, SaSh, Ke},   horospherical varieties of Picard rank one \cite{LMS, BFSS} and  $3$-dimensional Bott-Samelson varieties \cite{With}. Gamma conjecture I was less studied, and so far has been proved for complex Grassmannians \cite{GGI}, Fano 3-fold of Picard rank one \cite{GoZa}, Fano complete intersections in projective spaces  \cite{GaIr, SaSh, Ke}, and
  toric Fano manifolds that satisfy $B$-model analogue of Property $\mathcal{O}$ \cite{GaIr}.

  The proof of conjecture $\mathcal{O}$ for flag varieties in \cite{ChLi} relied on the well-known Perron-Frobenius  theory  \cite{Perr,Frob} for non-negative real matrices. It requires at least the positivity of  the matrix of the linear operator $\hat c_1$   with respect to some good basis of $H^\bullet(X)$. However, this does not hold even for   the blowup $X_1$ of $\bP^2$ at one point. In the case of del Pezzo surfaces $X_r (1\leq r\leq 8)$, we are able to prove conjecture $\mathcal{O}$ by deriving  a generalization
    of Perron-Frobenius theorem in Theorem \ref{genPFthm} that allows part of the entries of a real matrix to be negative, and providing some vanishing properties of Gromov-Witten invariants in Theorem \ref{vanishingproperty}.
   We remark that the conjecture  could also be directly proved by analysing the characteristic polynomial of $\hat c_1$ as was studied in \cite{BaMa} due to the very well study of   the relevant Gromov-Witten invariants of $X_r$ \cite{GoPa}. However, we expect our method to have further applications for other Fano manifolds, since it will be sufficient to apply our generalized Perron-Frobenius theorem by the study of  a   part  of the Gromov-Witten invariants.
Del Pezzo surfaces are either toric Fano or complete intersections in nice ambient spaces of Picard rank one. This fact enables us to prove Gamma conjecture I by using the mirror techniques proposed by Galkin and Iritani \cite{GaIr}, where the quantum Lefschetz principle \cite{Lee, CoGi} is a key ingredient. Here we would like to point out that the proof for $X_7$ and $X_8$ requires the study of the orbifold version of Gamma conjecture I for certain weighted projective spaces as shown in Theorem \ref{proportionforweightedprojectivespaces}.
A more systemic study of these conjectures (with modification if necessary) for general orbifolds  will be of  independent and great interest.

The present paper is organized as follows. In section 2, we describe the precise statements of conjecture $\mathcal{O}$ and Gamma conjecture I, and review basic facts of del Pezzo surfaces. In section 3, we derive a generalized Perron-Frobenius  theorem, and provide
 vanishing properties of certain Gromov-Witten invariants  of $X_r$. In section 4, we prove conjecture $\mathcal{O}$ by analyzing the corresponding matrix of $\hat c_1$ with respect to a specified basis of $H^*(X)$. Finally in section 5, we prove Gamma conjecture I for del Pezzo surfaces as well as for certain weighted projective spaces.
\section{Preliminaries}
\subsection{Conjecture $\cO$ and Gamma conjecture I for Fano manifolds}
In this subsection, we review the precise statements of   conjecture $\cO$ and   Gamma conjecture I for Fano manifolds,  mainly following \cite{GGI, GaIr}.
\subsubsection{Quantum cohomology} We refer to \cite{CoKa} for more details.

Let $X$ be a Fano manifold, namely a compact complex manifold $X$ whose anticanocial line bundle is ample. Let $\overline{\mathcal{M}}_{0, k}(X, \mathbf{d})$ denote the moduli stack of $k$-pointed genus-zero stable maps $(f: C\to X; p_1, \cdots, p_k)$ of class $\mathbf{d}\in H_2(X,\mathbb{Z})$, which has a coarse moduli space $\overline{M}_{0, k}(X, \mathbf{d})$.
Let $[\overline{M}_{0, k}(X, \mathbf{d})]^{\rm virt}$ be the virtual fundamental class of $\overline{\mathcal{M}}_{0, k}(X, \mathbf{d})$, which is of complex degree $\dim X-3+\int_{\mathbf{d}} c_1(X)+k$ in the Chow group $A_*(\overline{\mathcal{M}}_{0, k}(X, \mathbf{d}))$.
Given classes $\gamma_1, \cdots, \gamma_k\in H^*(X)=H^*(X, \mathbb{Q})$ and nonnegative integers $a_i$ for $1\leq i\leq k$, we have the following  associated gravitational correlator
  $$\langle \tau_{a_1}\gamma_1,\cdots, \tau_{a_k}\gamma_k\rangle_{\mathbf{d}}:=\int_{[\overline{M}_{0, k}(X, \mathbf{d})]^{\rm virt}}\prod_{i=1}^k\big(c_1(\mathcal{L}_i)^{a_i}\cup ev_i^*(\gamma_i)\big).$$
  Here $\mathcal{L}_i$ denotes the line bundle on $\overline{\mathcal{M}}_{0, k}(X, \mathbf{d})$ whose fiber over the stable map  $(f: C\to X; p_1, \cdots, p_k)$ is the cotangent space $T^*_{p_i}C$, and $ev_i$ denotes the $i$-th  evaluation map. When $a_i=0$ for all $i$, the above gravitational correlator becomes an ordinary $k$-pointed Gromov-Witten invariant $\langle  \gamma_1,\cdots,  \gamma_k\rangle_{\mathbf{d}}$ of class $\mathbf{d}$.
The gravitational correlators satisfy a number of axioms and the topological recursion relations (\textbf{TRR}). For the precise statements, we refer to  \cite[section 10.1.2]{CoKa}  for \textbf{Degree Axiom}, \textbf{Divisor Axiom} and \textbf{Fundamental Class Axiom},
 and \cite[Lemma 10.2.2]{CoKa} for \textbf{TRR}, which will be used in the next two sections.

  The (small) quantum cohomogy ring $QH^*(X)=(H^*(X)\otimes_{\mathbb{Q}} \mathbb{Q}[q_1,\cdots, q_m], \star)$ is a deformation of the classical cohomology $H^*(X)$. Here $m=b_2(X)$ is the second Betti number of $X$, and the quantum product of  $\alpha, \beta \in H^*(X)$ is given by\footnote{In terms of the notation in \cite{GGI}, $q_i=e^{h_i}$; the quantum product $\alpha\star_{\tau=\mathbf{0}} \beta$ defined therein coincides with the product $\alpha\star\beta|_{\mathbf{q}=(1, \cdots, 1)}$ here. }
     $$\alpha\star \beta :=\sum_{\mathbf{d}\in H_2(X,\mathbb{Z})}\sum_{i=1}^N \langle \alpha,\beta,\phi_i\rangle_{\mathbf{d}}\phi^i q^{\mathbf{d}}.$$
     Here $\{\phi_i\}_{i=1}^N$ is a homogeneous basis of $H^*(X)$, $\{\phi^i\}$ is its dual basis in $H^*(X)$ with respect to the Poincar\'e pairing, $q^{\mathbf{d}}=\prod_{j=1}^mq_j^{d_j}$ for $\mathbf{d}=(d_1,\cdots, d_m)$ with a basis of effective curve classes of $H_2(X, \mathbb{Z})$ being fixed a prior. We notice that the Gromov-Witten invariant $\langle \alpha,\beta,\phi_i\rangle_{\mathbf{d}}$ vanishes unless $d_i\geq 0$ for all $i$ and   $\deg (\alpha\cup \beta\cup\phi_i)=2(\dim X+\int_{\mathbf{d}} c_1(X))$. In particular, the quantum product is a finite sum and is a polynomial in $\mathbf{q}$. Moreover, the quantum product is independent of choices of the basis $\{\phi_i\}$.

\subsubsection{Conjecture $\cO$} Consider the even part of the cohomology $H^\bullet(X):=H^{\rm even}(X)$ and the finite-dimensional $\mathbb{Q}$-algebra $QH^\bullet(X)=
   (H^\bullet(X), \bullet)$ with the product defined by $\alpha\bullet \beta:=(\alpha\star\beta)|_{\mathbf{q}=\mathbf{1}}$, namely by the evaluation of the quantum product at $\mathbf{1}:=(1,\cdots, 1)$. Let $\hat c_1$ denote the linear operator induced by the first Chern class:
      $$\hat c_1: QH^\bullet(X)\longrightarrow QH^\bullet(X); \, \beta\mapsto c_1(X)\star \beta|_{\mathbf{q}=\mathbf{1}},$$
  which is independent of the choices of bases of effective curve classes.
 \begin{defn}[Property $\cO$]\label{defnrho} For a Fano manifold $X$, we denote by $\rho=\rho(\hat c_1)$ the spectral radius of the linear operator $\hat c_1$, namely
    $$\rho:=\max\{|\lambda|~:~ \lambda\in \mbox{Spec}(\hat c_1)\}\quad\mbox{where}\quad \mbox{Spec}(\hat c_1):=\{\lambda~:~ \lambda\in \bC  \mbox{ is an eigenvalue of } \hat c_1\}.$$
 We say that $X$ satisfies \textbf{Property $\cO$} if the following two conditions are satisfied.
 \begin{enumerate}
   \item $\rho\in \mbox{Spec}(\hat c_1)$ and it is of multiplicity one.
   \item For any $\lambda\in \mbox{Spec}(\hat c_1)$ with $|\lambda|=\rho$, we have $\lambda^s=\rho^s$, where $s$ is the Fano index of $X$, namely $s=\max\{k\in \bZ~:~ {c_1(X)\over k}\in H^2(X,\mathbb{Z})\}$.
 \end{enumerate}
 \end{defn}
 \begin{conjO}[\cite{GGI}]Every Fano manifold satisfies Property $\cO$.
   \end{conjO}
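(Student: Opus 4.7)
The plan is to attack both conditions of Property $\cO$ simultaneously through a structural study of the matrix of $\hat c_1$ in an appropriately chosen basis of $H^\bullet(X)$. The natural candidate basis is one provided by the geometry of $X$---a Schubert-type basis for homogeneous spaces, a toric fixed-point basis for toric Fanos, an exceptional-collection-derived basis for manifolds with a full exceptional collection, and so on. In each case the hope is that the structure constants for quantum multiplication by $c_1(X)$ are either non-negative, or have a sign pattern amenable to a generalized Perron--Frobenius argument of the type developed in section 3 of this paper.

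Assuming such a basis is in hand, the first condition (that $\rho \in \mbox{Spec}(\hat c_1)$ with multiplicity one) would reduce to showing that the matrix of $\hat c_1$ is \emph{irreducible}, so that $\rho$ is a simple Perron root. Irreducibility would in turn follow from a Gromov--Witten connectivity statement: for every pair of basis elements $\phi_i,\phi_j$, some power $\hat c_1^N$ has a non-zero $(j,i)$-entry. The divisor axiom together with effectivity of curve classes provides the combinatorial engine, but verifying it requires genuine geometric input. For the second condition, I would exploit the grading by $c_1(X)$-degree modulo $s$: quantum multiplication by $c_1(X)$ permutes the graded pieces cyclically, so the characteristic polynomial of $\hat c_1$ is invariant under $\lambda \mapsto \zeta \lambda$ for any $s$-th root of unity $\zeta$; combined with the Perron simplicity from step one this would force every eigenvalue of modulus $\rho$ to be an $s$-th root of $\rho^s$.

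A complementary strategy, following Galkin--Iritani, is to pass to a mirror Landau--Ginzburg model and prove that its superpotential has a unique critical value of maximal modulus. For toric Fanos this is the $B$-analogue of Property $\cO$ conjectured by Galkin; for complete intersections in suitable ambient varieties one can then transport the $B$-side statement back via the quantum Lefschetz principle. One might hope to reduce an arbitrary Fano manifold to these two classes via birational constructions---blowups, projective bundle structures, toric degenerations---while verifying that Property $\cO$ is preserved under each operation; but such a reduction is itself a major open program.

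The principal obstacle is that Conjecture $\cO$ is at present open in full generality: there is no classification of Fano manifolds in arbitrary dimension, no uniform construction covering all of them, and no theorem guaranteeing the existence of a basis of $H^\bullet(X)$ with non-negative or even sign-controlled quantum structure constants. All existing proofs exploit extra structure---homogeneity, toricity, low dimension, Picard rank one, small Fano index---and a unified attack appears to require substantially new input, presumably from homological mirror symmetry or the theory of derived categories of Fano manifolds. Accordingly, in the remainder of this paper we settle the conjecture only in the case of two-dimensional Fano manifolds (Theorem \ref{thmconjO}), where the classification is explicit and the generalized Perron--Frobenius theorem of section 3 suffices.
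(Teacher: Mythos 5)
You have correctly identified that this statement is a conjecture quoted from \cite{GGI}, not a result the paper establishes: the paper offers no proof of Conjecture $\cO$ in general, and none is known, so there is nothing for a proof attempt to be checked against. Your proposal rightly declines to manufacture a general argument and instead accurately describes the two techniques that do appear in the paper for the special case it handles --- a sign-controlled (generalized Perron--Frobenius) analysis of the matrix of $\hat c_1$ in a well-chosen basis, supplemented by vanishing results for certain Gromov--Witten invariants, and, on the mirror side, the Galkin--Iritani approach via a Landau--Ginzburg superpotential and the quantum Lefschetz principle. Your conclusion that the conjecture is settled here only for del Pezzo surfaces (Theorem \ref{thmconjO}) matches the paper exactly; the one caveat is that your sketch of condition (2) via a cyclic grading argument is heuristic and is not how the paper proceeds even for surfaces --- there it follows from Proposition \ref{propuniquerho}, i.e.\ from primitivity of the relevant nonnegative power of the matrix, rather than from any root-of-unity symmetry of the spectrum.
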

As explained in \cite[Remark 3.1.5]{GGI}, the name ``$\cO$" indicates the structure sheaf $\cO_X$, which is expected to correspond to $\rho$ from the viewpoint of homological mirror symmetry.  Although the above statement concerns about the even part of the cohomology only, it is in fact equivalent to a statement for the whole of $H^*(X)$ \cite{SaSh, GaIr}, and is also equivalent to that for the smaller part $\bigoplus_p H^{p, p}(X)$ of cohomology of Hodge type \cite{GaIr}.

Conjecture $\cO$ for flag variety $G/P$ was proved by Cheong and the third named author \cite{ChLi} by using Perron-Frobenius theorem based on a remark due to Kaoru Ono.

\subsubsection{Gamma conjecture I}
On the trivial $H^\bullet(X)$-bundle over $\mathbb{P}^1$, there is a so-called quantum connection, given by
  $$\nabla_{z\partial_z}=z\partial_z-{1\over z}(c_1(X)\bullet )+\mu.$$
  Here $z$ is an inhomogeneous co-ordinate on $\mathbb{P}^1$ and $\mu$ is the Hodge grading operator defined by $\mu: H^\bullet(X)\to H^\bullet(X); \phi\in H^{2p}(X)\mapsto \mu(\phi)=(p-{\dim X\over 2})\phi$.
  The quantum connection is a meromorphic connection, which is logarithmic at $z=\infty$ and irregular at $z=0$. The space of flat sections can be identified with the cohomology group $H^\bullet(X)$ via the fundamental solution
    $S(z)z^{-\mu}z^{c_1(X)}$ with a unique holomorphic function $S: \mathbb{P}^1\setminus\{0\}\to \mbox{End}(H^\bullet(X))$ (see \cite[Proposition 2.3.1]{GGI} for detailed explanations). The Givental's $J$-function, defined by $J_X(t)=z^{\dim X}\big(S(z)z^{-\mu}z^{c_1(X)}\big)^{-1}1$ where $t={1\over z}$, has the following   expansion around $t=0$.
     \ban
J_X(t)=e^{c_1(X)\log t}\bigg(1+\suml_{i=1}^N\suml_{\mathbf{d}\in H_2(X,\bZ)\setminus\{\mathbf{0}\}}\<\frac{\phi_i}{1-c_1(\mathcal{L}_1)}\>_{0, \mathbf{d}}\phi^it^{\int_{\mathbf{d}}c_1(X)}\bigg).
\nan
\begin{remark}
  Strictly speaking,   Givental's big $J$-function is a formal function of $\tau\in H^\bullet(X)$ taking values in $H^\bullet(X)$,   given by
  \ban
J_X(\tau,\hbar;Q)=\hbar+\tau+\suml_{i=1}^N\phi^i\suml_{\mathbf{d}\in H_2(X,\bZ)}Q^\mathbf d\suml_{m=0}^\infty{1\over m!}\<\frac{\phi_i}{\hbar-c_1(\mathcal{L}_1)},\underbrace{\tau,\cdots,\tau}_m\>_{\mathbf d},
\nan
It can be reduced to the form  $J_X(\tau,\hbar;Q)=\hbar e^{\frac{\tau}{\hbar}}\Big(\mathbbm1+\suml_{i=1}^N\phi^i\suml_{\mathbf{d}\in H_2(X,\bZ)}\hspace{-0.2cm}Q^\mathbf de^{\int_\mathbf d\tau}\<\frac{\phi_i}{\hbar(\hbar-c_1(\mathcal{L}_1)}\>_\mathbf d\Big)$ for   $\tau\in H^0(X)\oplus H^2(X)$, after applying the Fundamental Class Axiom and the Divisor Axiom.
The Givental's $J$-function in this paper is obtained from the original one by setting
\ban
J_X(t)=J_X(\tau=c_1(X)\log t,\hbar=1;Q=\mathbf{1}).
\nan
\end{remark}

\begin{prop}[Proposition 3.8 of \cite{GaIr}]\label{propJexpansion}
  For any Fano manifold $X$ satisfying Property $\cO$,  Givental's $J$-function $J_X(t)$ has an asymptotic expansion of the form
   \begin{equation}\label{expandJ}
      J_X(t)=Ct^{-\dim X\over 2}e^{\rho t}(A_X+\alpha_1 t^{-1}+\alpha_2t^{-2}+\cdots)
   \end{equation}
  as $t\to +\infty$ on the positive real line, where $C$ is a non-zero constant and $\alpha_i\in H^\bullet(X)$. 
\end{prop}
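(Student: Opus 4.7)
The plan is to analyze the asymptotic behavior of $J_X(t)$ at $t\to +\infty$ by viewing it as a flat section of the quantum connection $\nabla_{z\partial_z}=z\partial_z-(1/z)(c_1(X)\bullet)+\mu$ near its irregular singularity at $z=0$, corresponding under $t=1/z$ to $t\to+\infty$. The starting point is the formula $J_X(t)=z^{\dim X}\bigl(S(z)z^{-\mu}z^{c_1(X)}\bigr)^{-1}\mathbbm 1$, so the required asymptotics of $J_X(t)$ are controlled by the behavior of the fundamental solution near the irregular singularity.

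First, I would apply the Hukuhara--Levelt--Turrittin theorem to obtain a formal fundamental solution near $z=0$ that decomposes as a direct sum indexed by the eigenvalues $\lambda\in\mbox{Spec}(\hat c_1)$ of $c_1(X)\bullet$, each block carrying an exponential factor $e^{\lambda/z}=e^{\lambda t}$ times a formal power series in $z=1/t$ and a power/logarithmic factor coming from $\mu$. Classical Hukuhara--Turrittin asymptotic theory then promotes these formal expansions to genuine asymptotic expansions of analytic flat sections on a suitable sector around the positive real axis. Writing $J_X(t)$ in the resulting basis yields a sectorial asymptotic of the form $\sum_\lambda e^{\lambda t}t^{c_\lambda}(a_{\lambda,0}+a_{\lambda,1}t^{-1}+\cdots)$ as $t\to+\infty$ along $\bR_{>0}$.

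Second, I would use Property $\cO$ to identify the dominant block. For any eigenvalue $\lambda\neq\rho$, one needs $\mathrm{Re}(\lambda)<\rho$ so that $|e^{\lambda t}/e^{\rho t}|\to 0$. This is automatic when $|\lambda|<\rho$, since $\mathrm{Re}(\lambda)\leq|\lambda|<\rho$. For $\lambda\neq\rho$ with $|\lambda|=\rho$, part (2) of Property $\cO$ forces $\lambda=\rho\zeta$ for some $s$-th root of unity $\zeta\neq 1$, giving $\mathrm{Re}(\lambda)=\rho\cos(2\pi k/s)<\rho$. Hence the $e^{\rho t}$ block strictly dominates on the positive real ray, and the leading coefficient $A_X$ lies in the $\rho$-eigenspace of $\hat c_1$; by part (1) of Property $\cO$ this eigenspace is one-dimensional, so $A_X$ is determined up to a nonzero scalar. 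The prefactor $Ct^{-\dim X/2}$ comes from the $z^{-\mu}$ factor in the fundamental solution combined with the $z^{\dim X}$ out front: on the component spanned by $\mathbbm 1\in H^0(X)$, the grading operator $\mu$ has eigenvalue $-\dim X/2$, and $z^{-\mu}\mathbbm 1=z^{\dim X/2}\mathbbm 1=t^{-\dim X/2}\mathbbm 1$.

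The main technical obstacle is the passage from formal to analytic asymptotics in the presence of Stokes phenomena: one must rule out the possibility that Stokes jumps near the positive real ray introduce hidden exponentially dominant contributions from other eigenvalues. Property $\cO$ is exactly the spectral input that makes this work, since it guarantees a unique eigenvalue $\rho$ of maximal real part along $\bR_{>0}$ with one-dimensional generalized eigenspace, so the $e^{\rho t}$ term is unambiguously the leading exponential and picks out the single vector $A_X$ up to scalar normalization.
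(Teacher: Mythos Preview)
The paper does not prove this proposition at all: it is quoted verbatim as Proposition 3.8 of \cite{GaIr}, and no argument is given. So there is no ``paper's own proof'' to compare against; what you have sketched is essentially the argument behind the cited result, which in turn rests on \cite[\S3]{GGI}. Your outline---formal decomposition at the irregular singularity $z=0$ via Hukuhara--Levelt--Turrittin, lifting to sectorial asymptotics, and then using Property~$\cO$ to isolate $e^{\rho t}$ as the unique dominant exponential along $\bR_{>0}$---is the standard route and is correct in broad strokes.

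There is, however, one genuine gap. You establish that $e^{\rho t}$ dominates all other exponentials on the positive real axis, but you never argue that the coefficient of the $\rho$-block in the expansion of $J_X(t)$ is \emph{nonzero}. The proposition explicitly asserts $C\neq 0$, and this is not automatic: a priori, $\mathbbm 1$ could project trivially onto the $\rho$-direction under the sectorial decomposition. In \cite{GGI} this nonvanishing is the content of Theorem 3.6.8 (equivalently $\langle[\mathrm{pt}],A_X\rangle\neq 0$), and it requires a separate argument using that $\mathbbm 1$ is the identity of the quantum ring and that the Poincar\'e pairing restricts nondegenerately to the $\rho$-eigenline. Without this step your expansion could in principle begin with a subdominant exponential.

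A smaller point: your explanation of the prefactor $t^{-\dim X/2}$ is not quite right. You attribute it to $z^{-\mu}$ acting on $\mathbbm 1$, but $\mathbbm 1$ sits on the \emph{source} side of $(S(z)z^{-\mu}z^{c_1})^{-1}$, whereas the power of $t$ in the asymptotic is governed by the formal monodromy exponent on the one-dimensional $\rho$-block in the \emph{target}. The correct computation uses that $\hat c_1$ is self-adjoint and $\mu$ skew-adjoint for the Poincar\'e pairing, forcing the exponent on the simple $\rho$-block to vanish; the $t^{-\dim X/2}$ then comes from the global factor $z^{\dim X}$ together with $z^{\mu}$ acting on the degree components of the series. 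This does not affect the validity of your overall strategy, but the justification you wrote is misplaced.
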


The Gamma class \cite{Libg, Lu, Iri}  is a real characteristic class defined for an almost complex manifold. It is defined by Chern roots $x_1,\cdots, x_n$ of the tangent bundle $TX$ of $X$ and
  Euler's $\Gamma$-function  $\Gamma(z)=\int_0^\infty e^tt^{z-1}dt$, and has the following expansion:
  $$\hat \Gamma_X:=\prod_{i=1}^n\Gamma(1+x_i)=\exp\big(-C_{\rm eu}c_1(X)+\sum_{k=2}^\infty (-1)^k(k-1)!\zeta(k)\mbox{ch}_k(TX)\big)  \in H^\bullet (X, \mathbb{R})$$
where $C_{\rm eu}$ is the  Euler-Mascheroni  constant, $\zeta(k)=\sum_{n=1}^\infty{1\over n^k}$ is the value of Riemann zeta function at $k$, and  $\mbox{ch}_k$ denotes the $k$-th Chern character. 
There are various equivalent ways to describe  Gamma conjecture I. Here we introduce the one given in \cite[Corollary 3.6.9 (3)]{GGI}. For $\alpha, \beta\in H^*(X)$, we say $\alpha \varpropto \beta$ if $\alpha= b\cdot \beta$ for some $b\in \mathbb{C}$.
\begin{GammaI}
Let $X$ be a Fano manifold satisfying Property $\cO$. Then
\ban
\hat\Gamma_X\varpropto\lim\limits_{t\rightarrow+\infty}t^{\frac{\dim X}2}e^{-\rho t}J_X(t).
\nan
\end{GammaI}

\if{
\subsubsection{Orbifold version of conjecture $\cO$ and Gamma conjecture I}

A Fano orbifold is a connected, compact, complex orbifold with positive anitcanonical bundle.

It was known that some Fano orbifolds with nontrivial orbifold fundamental groups do not satisfy part (1) of Property $\cO$ \cite{Gal,GGI}, and Galkin-Golyshev-Iritani made the following conjecture \cite[Remark 3.1.8]{GGI}.

\begin{conjecture}\label{orbifoldconjectureO}
For a Fano orbifold $\mathcal X$, $\rho\in \mbox{Spec}(\hat c_1)$ with multiplicity equal to the number of conjugacy classes of $\pi_1^{orb}(\mathcal X)$.
\end{conjecture}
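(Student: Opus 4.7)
The plan is to reduce the multiplicity count to a character-theoretic statement by passing to the orbifold universal cover. Let $G:=\pi_1^{\mathrm{orb}}(\mathcal X)$, which we first assume to be finite, and let $p:\widetilde{\mathcal X}\to\mathcal X$ be the universal orbifold cover, so that $\pi_1^{\mathrm{orb}}(\widetilde{\mathcal X})=1$ and $\mathcal X=[\widetilde{\mathcal X}/G]$. Assuming the $\pi_1^{\mathrm{orb}}$-trivial version of Property $\mathcal O$ for $\widetilde{\mathcal X}$, the operator $c_1(\widetilde{\mathcal X})\star$ on $QH^{\bullet}_{\mathrm{CR}}(\widetilde{\mathcal X})$ has a simple top eigenvalue $\widetilde\rho$ with eigenline $\mathbb C\widetilde v$.

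The first step is to show $\rho(\mathcal X)=\widetilde\rho$, using the functoriality of orbifold Gromov--Witten invariants under the \'etale cover $p$ to intertwine quantum multiplication by $c_1$ with $p^{*}$; since $\widetilde v$ must be $G$-stable up to a single character (the $G$-action on a one-dimensional space), it descends, after twisting by that character, to a nonzero $\widetilde\rho$-eigenvector on $\mathcal X$, so $\widetilde\rho\in\mathrm{Spec}(\hat c_1)$, and a generalised Perron--Frobenius argument along the lines of Theorem~\ref{genPFthm} rules out larger eigenvalues. The second step is to exhibit $|\mathrm{Conj}(G)|$ linearly independent generalised $\rho$-eigenvectors on $\mathcal X$: Chen--Ruan cohomology $QH^{\bullet}_{\mathrm{CR}}(\mathcal X)$ contains, in addition to the untwisted sector $H^{\bullet}(\mathcal X)\simeq H^{\bullet}(\widetilde{\mathcal X})^{G}$, twisted sectors indexed by conjugacy classes of isotropy elements coming from $G$, and one shows that for each conjugacy class $[g]\subset G$ the corresponding ``twisted copy'' of $\widetilde v$ is again an eigenvector of $c_1(\mathcal X)\star_{\mathrm{CR}}$ with eigenvalue $\rho$. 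Together, the untwisted contribution (one line) and the $|\mathrm{Conj}(G)|-1$ non-trivial twisted-sector contributions give the predicted multiplicity.

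The hard part is making the twisted-sector construction rigorous and proving both bounds on the multiplicity. The lower bound requires computing the action of $c_1(\mathcal X)\star_{\mathrm{CR}}$ on the twisted-sector ``copies'' of $\widetilde v$, which involves orbifold Gromov--Witten invariants with insertions from twisted sectors whose moduli spaces are considerably more intricate than in the smooth case; one wants to show that the mixing terms between sectors vanish in the appropriate limit, so that each twisted sector survives as an honest $\rho$-eigenvector. The matching upper bound ``multiplicity $\le|\mathrm{Conj}(G)|$'' is most naturally obtained from a $G$-equivariant Perron--Frobenius theorem on $QH^{\bullet}_{\mathrm{CR}}(\widetilde{\mathcal X})$ refining Theorem~\ref{genPFthm}: decompose the $\widetilde\rho$-generalised eigenspace into $G$-isotypic pieces, show each irreducible representation appears at most once, and match isotypic components with conjugacy classes via the standard identity $|\mathrm{Irr}(G)|=|\mathrm{Conj}(G)|$. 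A separate foundational obstacle is the infinite $\pi_1^{\mathrm{orb}}(\mathcal X)$ case, where ``number of conjugacy classes'' itself needs reinterpretation (likely via characters of a suitable pro-finite or unitary completion) and no finite-cover reduction is available; one presumably has to work directly with the irregular Stokes structure of the quantum D-module at $z=0$ and pair it against the class-function space of $\pi_1^{\mathrm{orb}}(\mathcal X)$.
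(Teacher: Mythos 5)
The statement you have attempted to prove is not a result of the paper at all: it is an open conjecture, attributed by the authors to Galkin--Golyshev--Iritani (Remark 3.1.8 of [GGI]), and it appears only in a portion of the source that the authors have commented out of the compiled paper. The paper proves Conjecture $\mathcal O$ and Gamma conjecture I for del Pezzo surfaces (smooth, simply connected manifolds) and offers no argument, conditional or otherwise, for the orbifold multiplicity statement. So there is no proof in the paper to compare with, and your text has to stand on its own as a proposed proof of an open conjecture --- which it does not.

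Concretely, the gaps are these. First, you assume Property $\mathcal O$ (the $\pi_1^{orb}$-trivial version) for the universal orbifold cover $\widetilde{\mathcal X}$, which is itself an open conjecture, so at best the argument is a conditional reduction. Second, the descent step is unjustified: quantum multiplication by $c_1$ on the Chen--Ruan quantum cohomology of $\widetilde{\mathcal X}$ is not intertwined with $p^*$ in any simple way, since $\mathcal X=[\widetilde{\mathcal X}/G]$ has different curve classes and additional twisted sectors; a $\widetilde\rho$-eigenvector upstairs does not descend merely by twisting by a character of $G$. Third, and most seriously, the combinatorial heart of your plan is false in general: the twisted sectors of $\mathcal X$ are indexed by conjugacy classes of local isotropy elements (elements of $G$ with nonempty fixed locus on $\widetilde{\mathcal X}$), not by conjugacy classes of $\pi_1^{orb}(\mathcal X)$, so the $|\mathrm{Conj}(G)|-1$ ``twisted copies'' of the Perron--Frobenius vector need not exist in $H^\bullet_{orb}(\mathcal X)$; the expected mechanism behind the conjecture is rather homological-mirror-symmetric (the eigenvalue $\rho$ corresponding to $\mathcal O_{\mathcal X}$ and its twists), not a sector-by-sector duplication of an eigenvector. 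Fourth, both multiplicity bounds --- the vanishing of mixing terms between sectors and the ``$G$-equivariant Perron--Frobenius theorem'' --- are exactly the content of the conjecture and are left as acknowledged ``hard parts''; note also that Theorem \ref{genPFthm} of the paper needs explicit sign conditions on a concrete matrix and yields simplicity of $\rho$, not a prescribed higher multiplicity. Finally, the infinite $\pi_1^{orb}$ case is admitted to be untouched. In short, this is a speculative outline with its central steps missing or incorrect, for a statement that remains a conjecture.
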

\begin{remark}
A Fano manifold is always simply connected. However, it is unknown that whether or not the orbifold fundamental group of a Fano orbifold is finite. We thank Zhiyu Tian for informing us this open question.
\end{remark}

The second named author has proved that Fano orbi-curves do not satisfy part (2) of Property $\cO$, and we make the following conjecture.

\begin{conjecture}
For a Fano orbifold $\mathcal X$, the rotational symmetry group of $\mbox{Spec}(\hat c_1)$ is a cyclic group with order equal to the Fano index of $\mathcal X$.
\end{conjecture}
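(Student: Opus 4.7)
The plan is to establish the two divisibilities $s\mid N$ and $N\mid s$, where $G=C_N$ denotes the rotational symmetry group of $\mbox{Spec}(\hat c_1)$; this group is automatically finite cyclic since $\mbox{Spec}(\hat c_1)$ is finite and $G$ is a closed subgroup of $U(1)$ preserving it, so pinning down the order reduces the conjecture to a matching problem between the spectral symmetry of $\hat c_1$ and the integral structure of $c_1(\mathcal X)$.

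For the inclusion $C_s\subseteq G$ I would run the standard $s$-twist argument in the orbifold setting. By definition of the Fano index, $c_1(\mathcal X)/s$ is an integral class, which equips $QH^\bullet_{\mathrm{orb}}(\mathcal X)|_{\mathbf q=\mathbf 1}$ with a $\bZ/s$-grading in which $\hat c_1$ shifts degree by one. Decomposing $H^\bullet_{\mathrm{CR}}(\mathcal X)=\bigoplus_{i=0}^{s-1}H_i$ accordingly, the twist $T_\omega\colon\sum v_i\mapsto\sum\omega^i v_i$ for $\omega=e^{2\pi i/s}$ satisfies $\hat c_1\circ T_\omega=\omega^{-1}T_\omega\circ \hat c_1$, so it carries the $\lambda$-generalized eigenspace of $\hat c_1$ to the $\omega^{-1}\lambda$-generalized eigenspace and preserves all Jordan data. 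Hence $\omega^{-1}\in G$ and $C_s\subseteq G$.

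The harder inclusion $N\mid s$ I would try to obtain by reversing this construction: from the assumed $C_N$-symmetry of the multiset $\mbox{Spec}(\hat c_1)$, produce a linear automorphism $T$ of $H^\bullet_{\mathrm{CR}}(\mathcal X)$ intertwining $\hat c_1$ with multiplication by $\omega_N^{-1}:=e^{-2\pi i/N}$. Its eigenspace decomposition would then furnish a $\bZ/N$-grading on $H^\bullet_{\mathrm{CR}}(\mathcal X)$ under which $\hat c_1$ shifts by one, and provided this grading is compatible with the full quantum product it refines the Fano index, lifting $c_1(\mathcal X)/N$ to an integral Chen-Ruan class and forcing $N\le s$ by maximality of $s$. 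The main obstacle is precisely producing such a $T$ and certifying the product-compatibility: the spectral $C_N$-symmetry concerns only the characteristic polynomial of $\hat c_1$, whereas integrality of $c_1/N$ concerns the whole quantum algebra, and bridging the two requires control of Jordan blocks across $C_N$-orbits together with higher-order quantum products. The difficulty is amplified by the preceding remark that Fano orbi-curves $\bP(a_0,a_1)$ already violate the naive Property $\cO$(2), so there can be eigenvalues $\lambda$ of absolute value $\rho$ with $\lambda^s\neq \rho^s$; this rules out the easier route of bounding $|G|$ via its free action on the eigenvalues of maximal modulus, which was the mechanism used in the smooth Fano manifold case.

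As a concrete test I would first verify the conjecture on $\bP(a_0,a_1)$ using its Landau-Ginzburg mirror, which yields the characteristic polynomial of $\hat c_1$ in closed form and permits a direct comparison of its roots with the prescribed $C_s$-symmetry. Success there would validate the lifting strategy in the sharpest case and suggest how to propagate it to higher-dimensional Fano toric orbifolds through Givental's formalism and an orbifold version of the quantum Lefschetz principle, in parallel with the del Pezzo approach developed in sections 4 and 5 of the present paper.
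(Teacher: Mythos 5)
There is nothing in the paper for your proposal to be measured against: the statement you were given is an open \emph{conjecture} of the authors (it sits in a speculative discussion of orbifold analogues, motivated by the observation that Fano orbi-curves violate part (2) of Property $\cO$), and the paper offers no proof of it. Your write-up is honest about this, but it is then not a proof either: the inclusion that would give ``order of the symmetry group divides the Fano index'' is, by your own account, left entirely open --- you would need to manufacture an operator $T$ intertwining $\hat c_1$ with multiplication by a root of unity from nothing more than a symmetry of the characteristic polynomial, and then show compatibility with the whole quantum product; no mechanism for either step is proposed, and the failure of Property $\cO$(2) for orbi-curves that you yourself cite removes the only known soft argument.

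More seriously, even the ``easy'' inclusion $C_s\subseteq G$ is not secured by your twist argument in the orbifold setting. The $\bZ/s$-grading you invoke comes from the degree axiom, which requires $\int_{\mathbf d}c_1(\mathcal X)\in s\bZ$ for all effective classes $\mathbf d$ contributing to the orbifold quantum product, together with control of the age shifts in the Chen--Ruan grading. For orbifolds an integral (orbifold Picard) class pairs only fractionally with curve classes: already for $\mathbb{P}(1,1,2)$ one has $c_1=4h$ with $h$ the positive generator, while the generating curve class $\ell$ satisfies $\int_\ell h=\tfrac12$, so $\int_\ell c_1=2\notin 4\bZ$; the twisted sector (a point of age $1$) further breaks the naive parity of the grading. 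Nevertheless the spectrum does have full $C_4$ symmetry --- the paper records, via the result of Coates--Corti--Lee--Tseng, that the characteristic polynomial of $c_1(\mathscr X)\bullet$ on $H^\bullet_{orb}(\bP(1,w_1,\dots,w_N))$ is $\lambda^{r_{\mathscr X}}-c^{r_{\mathscr X}}$ --- which shows the symmetry exists for reasons your grading argument does not capture. So both halves of your plan need genuinely new input; as it stands the proposal is a reasonable research outline (and the mirror check on $\bP(a_0,a_1)$ is a sensible first test), but not a proof of the conjecture.
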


We refer readers to \cite[(23)]{Iri} for the explicit definition of the Gamma class $\hat\Gamma_\mathcal X$ of an almost complex orbifold $\mathcal X$.
\begin{conjecture}
Let $\mathcal X$ be a Fano orbifold with $\pi_1^{orb}(\mathcal X)=\{1\}$. If $\mathcal X$ satisfies Conjecture $\cO$, then $$\langle [\mbox{pt}], A_\mathcal X\rangle\neq 0\quad\mbox{and}\quad\hat \Gamma_\mathcal X={A_\mathcal X\over \langle [\mbox{pt}], A_\mathcal X\rangle}$$

\end{conjecture}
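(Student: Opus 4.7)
The plan is to imitate the Galkin-Iritani proof of Gamma conjecture I and upgrade it to the orbifold setting, using $\pi_1^{\mathrm{orb}}(\mathcal X)=\{1\}$ to pin down the normalization. First I would set up the orbifold quantum $D$-module on Chen-Ruan cohomology $H^\bullet_{CR}(\mathcal X)$, define Givental's orbifold $J$-function $J_\mathcal X(t)$ via twisted-sector Gromov-Witten invariants, and, under Property $\cO$ for $\mathcal X$, derive the orbifold analogue of Proposition \ref{propJexpansion}: an asymptotic expansion of $J_\mathcal X(t)$ on the positive real axis whose leading coefficient $A_\mathcal X\in H^\bullet_{CR}(\mathcal X)$ is well-defined up to a nonzero scalar, thanks to the simplicity of the eigenvalue $\rho$.

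Next I would establish the proportionality $\hat\Gamma_\mathcal X\varpropto A_\mathcal X$. Following \cite{GaIr}, the route is to exhibit a mirror Landau-Ginzburg model $(Y,W_\mathcal X)$, represent $J_\mathcal X(t)$ as an oscillatory integral $\int_\Gamma e^{-tW_\mathcal X}\omega$ over a Lefschetz thimble associated with the critical value $-\rho$, and perform a stationary-phase expansion at the corresponding non-degenerate critical point; the Gaussian factors assemble into the product $\prod_i\Gamma(1+x_i)$ on each inertia component, which is precisely $\hat\Gamma_\mathcal X$ in Iritani's orbifold formulation (\cite{Iri}, (23)). The hypothesis $\pi_1^{\mathrm{orb}}(\mathcal X)=\{1\}$ enters here: when the orbifold fundamental group is non-trivial, several conjugacy classes contribute to the leading asymptotics and $A_\mathcal X$ becomes a character-twisted combination of Gamma classes, whereas the trivial case leaves exactly one Gamma-class contribution in the leading term.

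Once proportionality $\hat\Gamma_\mathcal X=C\cdot A_\mathcal X$ is in hand, pinning down the scalar is immediate: the degree-zero component of $\hat\Gamma_\mathcal X$ on the untwisted sector equals $\Gamma(1)=1$, so pairing with the point class via the Chen-Ruan Poincar\'e pairing gives $\langle[\mathrm{pt}],\hat\Gamma_\mathcal X\rangle=1$; combined with $\hat\Gamma_\mathcal X=C\cdot A_\mathcal X$, this forces $\langle[\mathrm{pt}],A_\mathcal X\rangle\neq 0$ and $C=1/\langle[\mathrm{pt}],A_\mathcal X\rangle$, which is the stated identity. The principal obstacle is the proportionality step: both a suitable mirror LG model and a rigorous stationary-phase analysis at twisted sectors are currently available only for restricted classes of Fano orbifolds (weighted projective spaces, toric Deligne-Mumford stacks, and complete intersections therein, as exploited in Theorem \ref{proportionforweightedprojectivespaces} for the $X_7,X_8$ cases). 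An intrinsic proof bypassing the mirror would demand an orbifold Hukuhara-Levelt-Turrittin decomposition of the quantum connection at $z=0$ together with an identification of $A_\mathcal X$ as the flat section of smallest norm at the irregular singularity, generalizing the analytic input of \cite{GGI,GaIr}; controlling the contributions of all twisted sectors in such an intrinsic framework, without a Landau-Ginzburg input, is the place where additional hypotheses beyond $\pi_1^{\mathrm{orb}}(\mathcal X)=\{1\}$ may well be required.
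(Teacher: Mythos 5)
The statement you were asked to prove is not a theorem of this paper: it appears there (in a suppressed subsection) as a \emph{conjecture} for general Fano orbifolds, and the paper offers no proof of it. What the paper actually establishes is the special case of weighted projective spaces $\mathscr X=\bP(1,w_1,\cdots,w_N)$: Proposition \ref{gammaconjIforweightedprojectivespaces} identifies $\hat\Gamma_{\mathscr X}$ with the principal asymptotic class by means of the explicit mirror Landau--Ginzburg potential $f=x_1+\cdots+x_N+\frac{1}{x_1^{w_1}\cdots x_N^{w_N}}$, Iritani's integral representation of flat sections, and a stationary-phase estimate at the conifold point; Theorem \ref{proportionforweightedprojectivespaces} then converts this into the $J$-function formulation via a duality argument for flat sections, and Proposition \ref{QLPinweightedprojectivespace} transports the result to hypersurfaces such as $X_7$ and $X_8$ by the orbifold quantum Lefschetz principle. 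So there is no paper proof against which your attempt can be measured, and your text, as you yourself concede, is a strategy rather than a proof.

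Within that caveat, your outline is consistent with how the known cases are handled. The normalization step is correct: the untwisted $H^0$-component of $\hat\Gamma_{\mathcal X}$ equals $1$, so once a proportionality $\hat\Gamma_{\mathcal X}=C\cdot A_{\mathcal X}$ is known, pairing with $[pt]$ gives $1=C\langle[pt],A_{\mathcal X}\rangle$, hence $\langle[pt],A_{\mathcal X}\rangle\neq0$ and the stated formula; and your use of $\pi_1^{orb}(\mathcal X)=\{1\}$ to obtain multiplicity one of $\rho$ matches the paper's (conjectural) orbifold refinement of Conjecture $\cO$. The genuine gap is precisely the step you flag: the proportionality $\hat\Gamma_{\mathcal X}\varpropto A_{\mathcal X}$ for an arbitrary Fano orbifold. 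Neither a mirror Landau--Ginzburg model with an integral representation of the central charge, nor an intrinsic substitute (an orbifold analogue of the analytic theory of the irregular singularity at $z=0$ identifying $A_{\mathcal X}$ as the smallest-norm flat section), is available in that generality; the paper has these tools only for weighted projective spaces and complete intersections reached from them. Your proposal should therefore be read as a plausible plan plus an honest statement of the open problem; the statement itself remains a conjecture.
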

}\fi

\subsection{Del Pezzo surfaces}
It is well-known that any one-dimensional Fano manifold is isomorphic to the complex projective line $\mathbb{P}^1$.   A Fano manifold of dimension $2$ is called a \textit{del Pezzo surface}. It is either isomorphic to $\mathbb{P}^1\times \mathbb{P}^1$, or the blowup $X_r$ of $\mathbb{P}^2$ at $r$ points in general position ($0\leq r\leq 8$). We will exclude  $\mathbb{P}^1\times \mathbb{P}^1$ and $\mathbb{P}^2$ in this subsection.
\subsubsection{Basic topology} Curves in a surface are  divisors.  For convenience,  we will use the same notation for anyone of a divisor of $X_r$, its divisor class in $H^2(X_r, \mathbb{Z})$ and its curve class in $H_2(X_r, \mathbb{Z})$,  whenever there is no confusion. For instance in $D\cdot D'=\int_XD\cup D'$, we can easily read off the left (resp. right) hand side as the intersection product (resp. Poincar\'e pairing) of the divisors (resp. divisor  classes) $D, D'$.

Let  $H$ denote the pullback to $X_r$ of the  hyperplane class of $\bP^2$, and let $E_1,\cdots, E_r$ be the exceptional divisors.  Together with the
Poincar\'e dual $\mathbbm{1}:=[X_r]\in H^0(X_r,\mathbb{Z})$ and  $[{pt}]\in H^4(X_r, \mathbb{Z})$   of the corresponding   homology classes, they form a $\mathbb{Z}$-basis:
 $$H^*(X_r,\mathbb{Z})=H^{\rm even}(X_r, \mathbb{Z})=\mathbb{Z}\mathbbm{1}\oplus \mathbb{Z}[pt]\oplus \mathbb{Z}H\oplus \mathbb{Z}E_1\oplus\cdots\oplus \mathbb{Z}E_r $$
 The first Chern class of $X_r$ is given by $c_1:=c_1(X_r)=3H-\sum_{i=1}^rE_i$.
  We have $$H\cdot E_i=E_i\cdot E_j=0,\quad H\cdot H=-E_i\cdot E_i=1,\quad \mbox{for all }i, j\in\{1,\cdots, r\} \mbox{ with }i\neq j.$$
It follows that $X_r$ is of degree $c_1\cdot c_1=9-r$,  and   its Fano index    equals $1$  as  $c_1\cdot E_1=1$.

\subsubsection{Geometric interpretations}\label{geometricinterpretation} There are various geometric descriptions for del Pezzo surfaces $X_r$, where $1\leq r\leq 8$. For instance, all of them can be realized as complete intersections in   nice spaces as follows.

\begin{itemize}
  \item [$X_1$:]    degree $(1,1)$ hypersurface in $\bP^1\times \bP^2$;
  \item [$X_2$:]   complete intersection of   divisors of  degree $(1, 0, 1)$ and $(0, 1, 1)$ in  $\bP^1\times \bP^1\times \bP^2$;
   \item [$X_3$:]   complete intersection of  two divisors of  degree $(1,  1)$   in  $\bP^2\times \bP^2$;

   \item [$X_4$:] complete intersection of four hyperplanes in  Grassmannian $Gr(2,5)\subset\bP^9$ (embedded by Pl\"ucker);

 \item [$X_5$:] complete intersection of two quadrics in $\bP^4$;
 \item [$X_6$:]   cubic surface in $\bP^3$;

 \item [$X_7$:] hypersurface of degree $4$ in the weighted projective space $\bP(1,1,1,2)$;

 \item [$X_8$:] hypersurface of degree $6$ in the weighted projective space  $\bP(1,1,2,3)$.

\end{itemize}
  We refer to \cite[Chapter 3.2]{IsPr} and \cite{Cord} for the above descriptions of  $X_r$ with $4\leq r\leq 8$. Toric del Pezzo surfaces are precisely those $X_r$ with $1\leq r\leq 3$, and the descriptions are also known to the experts. Here we provide a proof for $X_2$, which is relatively less studied. The method    works for $X_1$ and $X_3$ as well.

\begin{proof}[Proof of $X_2$ being a complete intersection]

The  complete intersection $Z$ of  two generic   divisors of  degree $(1, 0, 1)$ and $(0, 1, 1)$ in  $Y:=\bP^1\times \bP^1\times \bP^2$ is a smooth projective variety.
By the adjunction formula,  we obtain the  anti-canonical divisor $-K_Z=\mathcal{O}_{{Y}}(H_1+H_2+H_3)|_Z$, where $H_i$   is the natural pull-back of the hyperplane class of the $i$-th factor  to $Y$.
   Clearly, $-K_Z$ is ample, and hence $Z$ is a del Pezzo surface.

  The degree $(-K_Z)^2$ of $Z$ can be computed by the intersection product of divisors in $Y$ as follows.  Notice that $H_1^2=H_2^2=H_3^3=0$, $H_1H_2H_3^2=1$ and $H_iH_j=H_jH_i$, we have
   \begin{align*}
      (-K_Z)^2&=(H_1+H_2+H_3)^2(H_1+H_3)(H_2+H_3) \\
      &=(H_1^2+H_2^2+H_3^2+2H_1H_2+2H_1H_3+2H_2H_3)(H_1H_2+H_1H_3+H_2H_3+H_3^2)\\
      &=7H_1H_2H_3^2=7
   \end{align*}
   It follows that $Z$ is a del Pezzo surface of degree $7$.
\end{proof}
\subsubsection{Quantum cohomology}\label{QHdelPezzo}

There has been well  study of the quantum cohomology ring $QH^*(X_r)$ of $X_r$ in \cite{CrMi,GoPa}. Therein we can immediately read off or easily deduce the multiplication table of the quantum product for small $r$. We notice that $\mathbbm{1}$ is the identity element in $QH^*(X_r)$ for any $X_r$.

\begin{example}\label{matX1}
  A basis of  $H^*(X_1)$ is given by $\{\mathbbm{1}, H, E_1, [pt]\}$. In $QH^*(X_1)$,   we have
\ban
H\star H=[pt]+q^{H-E_1},\quad H\star E_1=q^{H-E_1},\quad  H\star[pt]=(H-E_1)q^{H-E_1}+q^H,\\
E_1\star E_1=-[pt]+E_1q^{E_1}+q^{H-E_1},\qquad\quad\quad  E_1\star [pt]=(H-E_1)q^{H-E_1}.
\nan
For conjecture $\cO$, we  concern about the operator $\hat c_1$ on $QH^\bullet(X_1)$ induced by the quantum multiplication by  $c_1=3H-E_1$ with evaluation of all quantum variables at $1$. We have
\ban
\hat c_1[\mathbbm1,H,E_1,[pt]]=[\mathbbm1,H,E_1,[pt]]\left[\begin{array}{cccc}0&2&2&3\\3&0&0&2\\-1&0&-1&-2\\0&3&1&0\end{array}\right].
\nan

\noindent We denote by $\tilde M_1$ the matrix above, while denote by $M_1$   the matrix with respect to another $\mathbb{Z}$-basis $[\mathbbm1, H-E_1,E_1,[pt]]$. It follws that
\ban
M=\left[\begin{array}{cccc}1&0&0&0\\0&1&0&0\\0&-1&1&0\\0&0&0&1\end{array}\right]^{-1}\cdot\tilde M\cdot \left[\begin{array}{cccc}1&0&0&0\\0&1&0&0\\0&-1&1&0\\0&0&0&1\end{array}\right]=\left[\begin{array}{cccc}0&0&2&3\\3&0&0&2\\2&1&-1&0\\0&2&1&0\end{array}\right].
\nan

\end{example}

\begin{example}\label{matX2} A basis of  $H^*(X_2)$ is given by $\{\mathbbm{1}, H, E_1, E_2, [pt]\}$. In $QH^*(X_2)$,   we have
\ban
H\star H&=&[pt]+(H-E_1-E_2)q^{H-E_1-E_2}+q^{H-E_1}+q^{H-E_2},\\
H\star E_1&=&(H-E_1-E_2)q^{H-E_1-E_2}+q^{H-E_1},\\
H\star[pt]&=&(H-E_1)q^{H-E_1}+(H-E_2)q^{H-E_2}+q^H,\\
E_1\star E_1&=&-[pt]+E_1q^{E_1}+(H-E_1-E_2)q^{H-E_1-E_2}+q^{H-E_1},\\
E_1\star E_2&=&(H-E_1-E_2)q^{H-E_1-E_2},\\
E_1\star[pt]&=&(H-E_1)q^{H-E_1}.
\nan
 Let $\tilde M_2$ and  $M_2$   denote  the matrices  with respect to the corresponding $\mathbb{Z}$-bases:
 \begin{align*}
    \hat c_1 [\mathbbm1,H,E_1,E_2,[pt]]&=[\mathbbm1,H,E_1,E_2,[pt]]\tilde M_2,\\
   \hat c_1 [\mathbbm1,2H-E_1-E_2,E_1,E_2,[pt]]&=[\mathbbm1,2H-E_1-E_2,E_1,E_2,[pt]] M_2.
 \end{align*}
Then $\tilde M_2$ is directly read off from the above  table (by setting all quantum variables as $1$), and $M_2$ is obtained after a simple base change from $\tilde M_2$. They are precisely given by
\ban
\tilde M_2=\left[\begin{array}{ccccc}0&4&2&2&3\\3&1&1&1&4\\-1&-1&-2&-1&-2\\-1&-1&-1&-2&-2\\0&3&1&1&0\end{array}\right],\qquad
M_2=\left[\begin{array}{ccccc}0&4&2&2&3\\{3\over 2}&0&{1\over 2}&{1\over 2}&2\\{1\over 2}&1&-{3\over 2}&-{1\over 2}&0\\{1\over 2}&1&-{1\over 2}&-{3\over 2}&0\\0&4&1&1&0\end{array}\right].
\nan

\end{example}

\begin{example}\label{matX3}
 The quantum multiplication table of  $QH^*(X_3)$ with respect to the  basis   $\{\mathbbm{1}, H, E_1, E_2, E_3, [pt]\}$ can be read off from the following together with a permutation symmetry among the exceptional divisor classes $E_i$.
  \ban
H\star H&=&[pt]+\suml_{1\leq i\leq3}(H-\suml_{j\neq i}E_j)q^{H-\suml_{j\neq i}E_j}+\suml_{1\leq i\leq3}q^{H-E_i},\\
H\star E_1&=&(H-E_1-E_2)q^{H-E_1-E_2}+(H-E_1-E_3)q^{H-E_1-E_3}+q^{H-E_1},\\
H\star [pt]&=&\suml_{1\leq i\leq3}(H-E_i)q^{H-E_i}+q^H+2q^{2H-E_1-E_2-E_3},\\
E_1\star E_1&=&-[pt]+E_1q^{E_1}+(H-E_1-E_2)q^{H-E_1-E_2}+(H-E_1-E_3)q^{H-E_1-E_3}+q^{H-E_1},\\
E_1\star E_2&=&(H-E_1-E_2)q^{H-E_1-E_2},\\
E_1\star [pt]&=&(H-E_1)q^{H-E_1}+q^{2H-E_1-E_2-E_3}.
\nan
\end{example}
We again denote by $\tilde M_3$ the matrix  of $\hat c_1$ with respect to the above basis, while denote by $M_3$ the matrix with respect to the following $\mathbb{Q}$-basis, in contrast to that for $X_1, X_2$.
\begin{align*}
   \hat c_1 [\mathbbm1, c_1, E_1, E_2, E_3, [pt]]&=[\mathbbm1, c_1, E_1, E_2, E_3, [pt]] M_3.
 \end{align*}

Since $c_1=3H-E_1-E_2-E_3$, the matrices $\tilde M_3$ and $M_3$ are respectively given by
\ban
\tilde M_3=\left[\begin{array}{cccccc}
0&6&2&2&2&6\\
3&3&2&2&2&6\\
-1&-2&-3&-1&-1&-2\\
-1&-2&-1&-3&-1&-2\\
-1&-2&-1&-1&-3&-2\\
0&3&1&1&1&0
\end{array}\right],\, M_3=\left[\begin{array}{cccccc}
0&12&2&2&2&6\\
1&1&2/3&2/3&2/3&2\\
0&0&-7/3&-1/3&-1/3&0\\
0&0&-1/3&-7/3&-1/3&0\\
0&0&-1/3&-1/3&-7/3&0\\
0&6&1&1&1&0
\end{array}\right].
\nan

\section{Main technical results}
\subsection{A generalization of Perron-Frobenius theorem} By a matrix in this section, we always mean a square real finite matrix. The spectral radius of a matrix $M$ is given by
$$\rho(M):=\max\{|\lambda|~|~ \lambda \mbox{ is an eigenvalue of }M\}.$$
The  matrix $M$ is called \textit{reducible}, if there exists a permutation matrix $P$ such that $P^tMP$ is of the  form
$\left(\begin{array}{cc}
A&B\\
0&D
\end{array}\right)$   where $A,D$ are square submatrices.
 The matrix $M$ is called  \textit{irreducible}  if it is not reducible.
 The well-known Perron-Frobenius theory  \cite{Perr,Frob} concerns about properties on eigenvalues and eigenvectors of nonnegative irreducible matrices including the following proposition. It plays an essential role in the proof of conjecture $\mathcal{O}$ for flag varieties in \cite{ChLi}.

\begin{prop}[Theorem 1.5   of \cite{Sene}]\label{Perron-Frob} Let $M$ be an irreducible  nonnegative matrix. Then the spectral $\rho(M)$ itself is an eigenvalue of $M$ with multiplicity one.
\end{prop}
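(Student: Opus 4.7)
The plan is to follow the standard three-stage argument for the classical Perron--Frobenius theorem, using only elementary linear algebra and compactness together with the structural consequence of irreducibility.

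\textbf{Stage 1 (Existence of a nonnegative Perron eigenvector).} I would work with the Collatz--Wielandt functional $r(x)=\min_{x_i>0}(Mx)_i/x_i$ on nonzero $x\in\mathbb{R}^n_{\geq 0}$, and set $\lambda^{*}=\sup\{r(x):x\geq 0,\ \|x\|_1=1\}$. A compactness argument on the nonnegative part of the unit simplex produces a maximizer $v\geq 0$. A standard perturbation argument (if $Mv-\lambda^{*}v$ had any strictly positive component, a small shift would strictly increase $r$) upgrades the inequality $Mv\geq\lambda^{*}v$ to equality.

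\textbf{Stage 2 (Positivity and identification with the spectral radius).} I would invoke the key positivity lemma: for an irreducible nonnegative matrix $M$, the matrix $(I+M)^{n-1}$ has all entries strictly positive, which is equivalent to the strong connectivity of the digraph attached to the support pattern of $M$. Applying this to $v$ gives $(1+\lambda^{*})^{n-1}v>0$ componentwise, hence $v>0$. For any complex eigenpair $Mw=\mu w$, the triangle inequality yields $M|w|\geq|\mu||w|$ entrywise, so $r(|w|)\geq|\mu|$, forcing $|\mu|\leq\lambda^{*}$. Combined with the trivial bound $\rho(M)\geq\lambda^{*}$, this gives $\rho(M)=\lambda^{*}\in\mathrm{Spec}(M)$.

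\textbf{Stage 3 (Multiplicity one -- the main obstacle).} For geometric simplicity, if $Mw=\rho(M)w$ with $w$ not a multiple of $v$, choose a scalar $t$ so that $v-tw$ is a nonzero real eigenvector with at least one vanishing entry; then $(I+M)^{n-1}(v-tw)=(1+\rho)^{n-1}(v-tw)$, and the componentwise positivity of $(I+M)^{n-1}$ forces every entry of $v-tw$ to be strictly positive, a contradiction. For algebraic simplicity, I would apply Stages 1--2 to $M^{T}$, which is again irreducible and nonnegative with the same spectral radius, to produce a strictly positive left eigenvector $u>0$ with $u^{T}M=\rho u^{T}$. If a Jordan block existed, one could find $w$ with $(M-\rho I)w=v$; pairing with $u$ gives $0=u^{T}(M-\rho I)w=u^{T}v$, contradicting $u,v>0$.

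The main obstacle is Stage 3: both the geometric and algebraic simplicity rely critically on the positivity of $(I+M)^{n-1}$ and on the existence of a strictly positive left Perron eigenvector. This is precisely where irreducibility is indispensable; once that positivity is established, the simplicity arguments are short, but for a merely reducible nonnegative matrix the conclusion genuinely fails (the spectral radius may be a repeated eigenvalue arising from distinct irreducible blocks).
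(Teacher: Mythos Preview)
The paper does not actually prove this proposition: it is quoted verbatim as Theorem~1.5 of \cite{Sene} and used as a black box. So there is no ``paper's own proof'' to compare against; your write-up supplies a proof where the authors simply invoke the literature.

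That said, your argument is the standard Collatz--Wielandt proof and is essentially correct. It is also very much in the spirit of what the paper does later: in the proof of Theorem~\ref{genPFthm} the authors use exactly the same ingredients you isolate (the functional $r(\mathbf{x})=\min_{x_j\neq 0}(\mathbf{x}^tT)_j/x_j$, compactness on the sphere, and the positivity of $(I_n+T^k)^m$ from Lemma~\ref{subinvlem}) to handle their generalized setting, and then appeal to Proposition~\ref{Perron-Frob} for the simplicity of $\rho(T^k)$. So your Stages~1--2 are effectively reproduced inside the paper's proof of the generalization, while your Stage~3 is precisely the content the paper outsources to \cite{Sene}.

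One small gap in your Stage~3: for geometric simplicity you write ``choose a scalar $t$ so that $v-tw$ is a nonzero real eigenvector with at least one vanishing entry,'' but to conclude $(I+M)^{n-1}(v-tw)>0$ from the entrywise positivity of $(I+M)^{n-1}$ you also need $v-tw\geq 0$. The fix is standard: since $M$ is real you may take $w$ real, and then choose $t$ maximal with $v-tw\geq 0$; this $t$ exists because $v>0$, and maximality forces some entry of $v-tw$ to vanish while $v-tw\neq 0$ (as $w\not\propto v$). With that adjustment your contradiction goes through.
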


There have been various  extensions of Perron-Frobenius theory (see \cite{ElSz} and references therein). Here we  give one more extension, which is new to our knowledge.

\begin{thm}[Generalized Perron-Frobenius Theorem]\label{genPFthm}
  Let $T=(t_{ij})$ be an $n\times n$ real matrix that satisfies the following:
  \begin{enumerate}
    \item $\sum_{i=1}^nt_{ij}>0$ for $j=1,\cdots, n$;

    \item $T^k$ is an irreducible nonnegative matrix for some positive integer $k$.
  \end{enumerate}
  Then   the spectral radius $\rho(T)$ itself is an eigenvalue of $T$ with multiplicity one.
\end{thm}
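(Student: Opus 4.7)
The plan is to reduce everything to the classical Perron--Frobenius statement (Proposition \ref{Perron-Frob}) applied to $T^k$, and then use condition (1) as a sign selector to transfer the conclusion back from $T^k$ to $T$.

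First I would apply Proposition \ref{Perron-Frob} to the irreducible nonnegative matrix $T^k$: its spectral radius $\rho(T^k)$ is a simple eigenvalue of $T^k$ with a strictly positive (Perron) eigenvector $v$, and the spectral mapping property gives $\rho(T^k)=\rho(T)^k$. Thus $T^k v=\rho(T)^k v$ with $v>0$ componentwise.

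Next I would show $v$ is already an eigenvector of $T$. Since $T$ commutes with $T^k$, the vector $Tv$ again satisfies $T^k(Tv)=\rho(T)^k(Tv)$, so it lies in the one-dimensional $\rho(T)^k$-eigenspace of $T^k$; hence $Tv=c\,v$ for some scalar $c$. Because $T$ and $v$ are real, $c$ is real, and $c^k=\rho(T)^k$ forces $c=\pm\rho(T)$ (only the minus sign is a real issue, and only when $k$ is even). To pin down $c=+\rho(T)$ I would invoke condition (1): contracting $Tv=cv$ with the all-ones row vector $\mathbf{1}^T=(1,\ldots,1)$ yields
\[
   c\,\sum_{j=1}^{n}v_j \;=\; \mathbf{1}^T T v \;=\; \sum_{j=1}^{n}\Big(\sum_{i=1}^{n}t_{ij}\Big) v_j,
\]
and the right-hand side is strictly positive since each column sum $\sum_i t_{ij}$ is positive and each $v_j$ is positive. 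Therefore $c>0$, so $c=\rho(T)$ and $\rho(T)\in\mbox{Spec}(T)$.

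For the multiplicity claim, I would invoke Jordan normal form: a Jordan block of size $s$ with nonzero eigenvalue $\lambda$ for $T$ produces a Jordan block of the same size $s$ with eigenvalue $\lambda^k$ for $T^k$. Hence the algebraic multiplicity of $\rho(T)^k$ in $T^k$ equals the sum of the algebraic multiplicities in $T$ of all $\mu$ with $\mu^k=\rho(T)^k$, and this sum equals one by the Perron--Frobenius simplicity applied to $T^k$. Since $\rho(T)$ is already one such $\mu$ by the previous step, its multiplicity in $T$ is exactly one. The main conceptual obstacle is ensuring that the Perron eigenvector of $T^k$ corresponds to the positive real $k$-th root $\rho(T)$ rather than to $-\rho(T)$ or to a complex root of unity times $\rho(T)$; condition (1) is precisely what resolves this, through the positivity of the pairing $\mathbf{1}^T T v$.
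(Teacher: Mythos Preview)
Your argument is correct, and it is genuinely different from the paper's proof. The paper constructs a positive left eigenvector of $T$ directly, via a Collatz--Wielandt variational argument: it maximizes $r(\mathbf{x})=\min_{j:x_j\neq 0}\frac{(\mathbf{x}^tT)_j}{x_j}$ over $\mathbf{x}\ge\mathbf{0}$, uses condition (1) to ensure $r(\mathbf{1})>0$, and uses condition (2) (through $(I+T^k)^m>0$) to show the maximizer is an honest eigenvector; only afterwards does it invoke $T^k$ to identify the resulting eigenvalue with $\rho(T)$ and to get simplicity. You instead take the Perron eigenvector $v>0$ of $T^k$ as a black box, observe from $[T,T^k]=0$ and one-dimensionality of the Perron eigenspace that $Tv=cv$ with $c\in\{\pm\rho(T)\}$, and use condition (1) via the pairing $\mathbf{1}^TTv>0$ to select $c=+\rho(T)$; the multiplicity step is then the same in spirit. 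Your route is shorter and more conceptual, leaning entirely on the classical statement for $T^k$; the paper's route is more self-contained, effectively reproving part of Perron--Frobenius for $T$ itself. Two small remarks: the strict positivity of $v$ that you need is provided in the paper's framework by Lemma \ref{subinvlem}(2)(a), and for the multiplicity step you only use that $\mathrm{mult}_{T^k}(\rho(T)^k)=\sum_{\mu^k=\rho(T)^k}\mathrm{mult}_T(\mu)$, which follows from the characteristic polynomial factorization without the full Jordan block statement.
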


\begin{remark}\label{rmkforPF}
  The condition (1)  can be replaced by $\sum_{j=1}^nt_{ij}>0$ for $i=1,\cdots, n$. Furthermore if the integer $k$ in condition (2) is odd, then condition (1) is abundant, and the statement is an easy consequence of the Perron-Frobenius Theorem.

   Condition (1) was discovered by  the fourth named author, who also showed the existence of a positive eigenvector corresponding to $\rho(T)$ in his  Bachelor Thesis \cite{Yang}.
\end{remark}

For column vectors $\mathbf{x}=(x_i)_{n\times 1}, \mathbf{y}=(y_i)_{n\times 1}$ in $\mathbb{R}^n$, we say $\mathbf{x}\leq \mathbf{y}$  (resp. $\mathbf{x}<\mathbf{y}$)
if $x_i\leq y_i$ (resp. $x_i<y_i$) for all $i\in\{1, \cdots, n\}$.

\begin{lemma}\label{subinvlem}
  Let $T$ be an $n\times n$  irreducible nonnegative matrix.
  \begin{enumerate}
    \item There exists $m\in \mathbb{Z}_{>0}$ such that $(I_n+T)^m$ is a matrix with all entries positive.
    \item Let $s\in \mathbb{R}_{>0}$,  and $\mathbf{y}\geq \mathbf{0}$ be a nonzero vector satisfying
          $T\mathbf{y}\leq  s \mathbf{y}$. Then \\
          {\upshape (a)} $\mathbf{y} > \mathbf{0}$; {\upshape (b)} $s\geq  \rho(T)$. Moveover, $s=\rho(T)$ if and only if $T\mathbf{y}=s\mathbf{y}$.
  \end{enumerate}
\end{lemma}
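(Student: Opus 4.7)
The plan is to handle (1) combinatorially and then bootstrap it to (2).

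For (1), irreducibility of $T$ means the directed graph on $\{1,\dots,n\}$ with edge $i\to j$ precisely when $T_{ij}>0$ is strongly connected, so for every ordered pair $(i,j)$ there is a directed walk from $i$ to $j$ of length $\ell\in\{0,1,\dots,n-1\}$ (with $\ell=0$ via the identity when $i=j$). This makes $(T^\ell)_{ij}>0$ for at least one such $\ell$, and expanding $(I_n+T)^{n-1}=\sum_{k=0}^{n-1}\binom{n-1}{k}T^k$ via the binomial theorem shows every entry of $(I_n+T)^{n-1}$ is strictly positive, so $m=n-1$ works.

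For (2)(a) I would add $\mathbf{y}$ to both sides of $T\mathbf{y}\leq s\mathbf{y}$ to obtain $(I_n+T)\mathbf{y}\leq (1+s)\mathbf{y}$; since $I_n+T\geq \mathbf{0}$, iterating preserves the order relation and yields $(I_n+T)^m\mathbf{y}\leq (1+s)^m\mathbf{y}$ for the $m$ from part (1). The right side is finite while the left side has all coordinates strictly positive (a positive matrix applied to a nonnegative nonzero vector), which forces $\mathbf{y}>\mathbf{0}$. For the inequality in (2)(b), I iterate $T\mathbf{y}\leq s\mathbf{y}$ to $T^k\mathbf{y}\leq s^k\mathbf{y}$; since $\mathbf{y}>\mathbf{0}$ by (a), setting $\epsilon:=\min_iy_i>0$ gives the coordinatewise bound $|\mathbf{v}|\leq(\|\mathbf{v}\|_\infty/\epsilon)\mathbf{y}$ for every $\mathbf{v}\in\mathbb{R}^n$, whence $|T^k\mathbf{v}|\leq T^k|\mathbf{v}|\leq (\|\mathbf{v}\|_\infty/\epsilon)s^k\mathbf{y}$, and hence $\|T^k\|_\infty\leq(\|\mathbf{y}\|_\infty/\epsilon)s^k$. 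Gelfand's formula $\rho(T)=\lim_{k\to\infty}\|T^k\|_\infty^{1/k}$ then delivers $\rho(T)\leq s$.

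For the equality criterion in (2)(b), the ``if'' direction is automatic, since $T\mathbf{y}=s\mathbf{y}$ makes $s$ an eigenvalue, so $s\leq\rho(T)$, combined with the bound just proved gives $s=\rho(T)$. For ``only if'', suppose $s=\rho(T)$ yet $\mathbf{w}:=s\mathbf{y}-T\mathbf{y}\neq\mathbf{0}$. I would apply the positive matrix $(I_n+T)^m$ from part (1), which commutes with $T$, to $\mathbf{w}\geq\mathbf{0}$; positivity then yields $(I_n+T)^m\mathbf{w}>\mathbf{0}$ coordinatewise, that is, $T\mathbf{y}'<s\mathbf{y}'$ strictly, where $\mathbf{y}':=(I_n+T)^m\mathbf{y}>\mathbf{0}$. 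Setting $s':=\max_i(T\mathbf{y}')_i/y'_i$, finiteness of the index set makes $s'<s$, and $T\mathbf{y}'\leq s'\mathbf{y}'$ feeds back into the bound already proved to give $\rho(T)\leq s'<s=\rho(T)$, a contradiction. The main obstacle I expect is exactly this ``strictness upgrade'': converting a coordinatewise strict inequality into one with a strictly smaller scalar via the smoothing $(I_n+T)^m$ is the characteristic Perron-Frobenius trick, while all remaining steps are standard manipulations with nonnegative matrices.
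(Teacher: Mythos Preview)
Your argument is correct and self-contained. The paper, by contrast, does not prove this lemma at all: it simply cites part (1) as \cite[Theorem~1.4]{Sene} and part (2) as the ``Sub-invariance Theorem'' \cite[Theorem~1.6]{Sene}, so there is no paper-proof to compare strategies against.

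Two very minor points. First, in (1) your choice $m=n-1$ gives $m=0$ when $n=1$, which lies outside $\mathbb{Z}_{>0}$; of course any $m\geq 1$ works in that degenerate case as well. Second, in the ``only if'' step of (2)(b) you feed $T\mathbf{y}'\leq s'\mathbf{y}'$ back into the bound $\rho(T)\leq s'$, but that bound was stated for $s'\in\mathbb{R}_{>0}$. This is harmless: since $s=\rho(T)>0$ forces $T\neq 0$, and $T\geq \mathbf{0}$ with $\mathbf{y}'>\mathbf{0}$ gives $T\mathbf{y}'$ a positive entry, one has $s'>0$; alternatively your Gelfand-formula estimate goes through verbatim for $s'=0$. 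Neither issue affects the substance of the proof.
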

\noindent Here the first statement is a well-known property for irreducible nonnegative matrices (see e.g.   \cite[Theorem 1.4]{Sene}); the second statement   is referred to as ``The Sub-invariance Theorem" (see e.g.  \cite[Theorem 1.6]{Sene}), which asserts that any irreducible nonnegative matrix has a unique nonnegative eigenvector up to a positive scalar. 
\bigskip

\begin{proof}[Proof of Theorem \ref{genPFthm}]
We first show that $T$ has a positive eigenvalue as follows. The argument  is similar to that for Theorem 1.1 (a) of   \cite{Sene}.

 Consider the upper-semicontinuous function defined by
     $$r: \mathbb{R}^n_{\geq 0}\setminus \{\mathbf{0}\}\to \mathbb{R};\quad \mathbf{x}=(x_1, \cdots, x_n)^t\mapsto r(\mathbf{x}):=\min_{1\leq j\leq n\atop x_j\neq 0}{\sum_i x_i t_{ij}\over x_j}.$$

    By assumption (1) and the definition of $r(\mathbf{x})$, $r(\mathbf{1})>0$ where $\mathbf{1}:=(1,\cdots, 1)^t$. Denote $K:=\max\limits_{1\leq i\leq n} \sum_j |t_{ij}|$. Since $x_j r(\mathbf{x})\leq \max\{0, \sum_{i} x_it_{ij}\} \leq \sum_{i}x_i|t_{ij}|$ for all $j$, we have
     $r(\mathbf{x})\sum_{j}x_j\leq \sum_j\sum_ix_i|t_{ij}|\leq \sum_{i}x_iK$ and hence
      $r(\mathbf{x})\leq K$. Therefore for
      $$\rho:= \sup_{\mathbf{x}\in \mathbb{R}^n_{\geq 0}\setminus \{\mathbf{0}\}}r(\mathbf{x})=\sup_{\mathbf{x}\geq \mathbf{0}\atop \mathbf{x}^t\mathbf{x}=1}r(\mathbf{x}),$$
     we have  $0<r(\mathbf{1})\leq \rho\leq K<+\infty.$
    Since $r(\mathbf{x})$ is upper-semicontinuous on the compact subset $\{\mathbf{x}~|~\mathbf{x}\geq 0, \mathbf{x}^t\mathbf{x}=1\}$,  the supremum $\rho$  is actually attained for some $\hat{\mathbf{x}}$ in this compact region. Consequently, we have
    $\sum_{i}\hat x_i t_{ij}\geq \hat x_j r(\hat{\mathbf{x}})=\rho \hat x_j$ for all $j$, namely $\mathbf{z}^t:=\hat{\mathbf{x}}^t T -\rho \hat{\mathbf{x}}^t\geq \mathbf{0}^t$.

  Assume
     $\mathbf{z}\neq \mathbf{0}.$ By Lemma \ref{subinvlem}, all entries of $(I_n+T^k)^m$ are positive   for some $m$. Thus
      $$\mathbf{0}^t<\mathbf{z}^t(I_n+T^k)^m=\hat{\mathbf{x}}^t (I_n+T^k)^mT -\rho \hat{\mathbf{x}}^t(I_n+T^k)^m \quad\mbox{and}\quad
      \tilde{\mathbf{x}}:=\hat{\mathbf{x}}^t(I_n+T^k)^m>\mathbf{0},$$
ahence
     ${\sum_{i}\tilde x_it_{ij}\over \tilde x_j}>\rho$ for all $1\leq j\leq n$. It follows that   $r(\hat{\mathbf{x}})>\rho$, which  is a contradiction to the definition of $\rho$. Hence, we have $\mathbf{z}=\mathbf{0}$, implying that $\rho$ is an eigenvalue of $T$.

     Now we have $\hat{\mathbf{x}}^t T=\rho \hat{\mathbf{x}}^t$. It follows that $(T^k)^{\rm{t}} \hat{\mathbf{x}}=\rho^k \hat{\mathbf{x}}$, where we notice $\rho^k>0$ and $\hat{\mathbf{x}}\geq \mathbf{0}$. Therefore by assumption (2) and Lemma \ref{subinvlem} (2),
     we have $\rho^k=\rho((T^k)^{\rm{t}})=\rho(T^k)$. For any eigenvalue $\lambda$ of $T$, $\lambda^k$ is an eigenvalue of $T^k$. It follows that
       $|\lambda^k|\leq \rho(T^k)=\rho^k$, and hence $|\lambda|\leq \rho$. That is, $\rho(T)=\rho$. Moreover, $\rho^k$ is an eigenvalue of $T^k$ of    multiplicity one by Proposition \ref{Perron-Frob}. Thus  $\rho$ is an eigenvalue of $T$ of multiplicity one.
\end{proof}
The next property is a well-known consequence of the Perron-Frobenius theory on nonnegative matrices. The result was due to Perron when all the entries  $m_{ij}$ are  positive.
\begin{prop}\label{propuniquerho}
  Let $M=\big(m_{ij}\big)$ be an irreducible nonnegative matrix. Suppose $M$ has exactly $h$ eigenvalues of modulus $\rho(M)$ with multiplicities counted. If there exists $i$ such that $m_{ij}>0$ for any $j$, then   $h=1$, namely  $M$ has a unique   eigenvalue  with modulus $\rho(M)$ given by the simple eigenvalue $\rho(M)$ itself.
\end{prop}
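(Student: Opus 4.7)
The plan is to reduce the claim to the standard Frobenius refinement of the Perron-Frobenius theorem. The key observation is that the hypothesis ``$m_{ij}>0$ for every $j$'', applied in particular with $j=i$, gives $m_{ii}>0$, so the directed graph associated to $M$ contains a self-loop at vertex $i$.

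First I would recall the classical theorem of Frobenius (see e.g.\ Seneta's monograph already cited in the paper): if $M$ is an irreducible nonnegative matrix with exactly $h$ eigenvalues of modulus $\rho(M)$, then $h$ equals the greatest common divisor of the lengths of the directed cycles in the digraph associated to $M$. Equivalently, $M$ is permutation-similar to an $h$-cyclic block matrix
\[
P^{t}MP=\begin{pmatrix} 0 & M_{12} & 0 & \cdots & 0 \\ 0 & 0 & M_{23} & \cdots & 0 \\ \vdots & & & \ddots & \vdots \\ 0 & 0 & 0 & \cdots & M_{h-1,h} \\ M_{h,1} & 0 & 0 & \cdots & 0 \end{pmatrix},
\]
whose diagonal blocks are all zero blocks; in particular every diagonal entry of $M$ must vanish as soon as $h\geq 2$.

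Combining the two steps is then immediate: the self-loop at vertex $i$ has length one, so the gcd of cycle lengths divides $1$ and therefore $h=1$. Equivalently, the positivity $m_{ii}>0$ directly contradicts the vanishing of diagonal entries imposed by the $h$-cyclic form when $h\geq 2$. Once $h=1$ is established, Proposition \ref{Perron-Frob} identifies the unique eigenvalue of modulus $\rho(M)$ as $\rho(M)$ itself and confirms that it is simple, completing the proof.

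No serious obstacle is anticipated. The only point that requires care is the precise formulation of the Frobenius cyclic normal form, together with the (classical) identification of $h$ as the period of the digraph; once these are quoted, the argument reduces to a single line using the self-loop at $i$ produced by $m_{ii}>0$.
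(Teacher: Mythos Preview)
Your proposal is correct and follows essentially the same route as the paper: both arguments quote the Frobenius $h$-cyclic normal form for an irreducible nonnegative matrix (the paper cites Minc, Chapter~3, Theorem~3.1) and derive an immediate contradiction when $h\geq 2$. The only cosmetic difference is that the paper uses the entire positive row to contradict the block structure, whereas you observe that already the single diagonal entry $m_{ii}>0$ suffices; this is a mild sharpening but not a different method.
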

\begin{proof} The number $h$ is traditionally called the {\it index of imprimitivity} (or {\it period}) of the irreducible matrix $M$.
  If  $h>1$, then  there exists a permutation matrix $P$ such that $PMP^t$ is of the following form   (see e.g.    Theorems 3.1 of Chapter 3 of \cite{Minc}):
$$
 \left[\begin{array}{cccccc}
0&M_{12}&0&\cdots&0&0\\
0&0&M_{2,3}&\cdots&0&0\\
\vdots&&&\ddots&0&\vdots\\
0&0&&\cdots&0&M_{h-1,h}\\
M_{h1}&0&&\cdots&&0
\end{array}\right].
$$
However, the hypothesis implies that $PMP^t$ always contains a row of positive numbers, which makes a contradiction.
\end{proof}

\begin{example}
  The matrix $M_1$ (resp. $M_2, M_3$)  in Example \ref{matX1} (resp. \ref{matX2}, \ref{matX3})   obviously satisfies the condition (1) in Proposition \ref{genPFthm}. By direct calculations, we have
  $$M_1^3=\left[\begin{array}{cccc}26&1&7&28\\28&26&1&8\\7&21&5&1\\1&7&21&26\end{array}\right],\quad M_2^2=\left[\begin{array}{ccccc}8&16&1&1&8\\{1\over 2}&15&4&4&{9\over 2}\\{1\over 2}&0&4&3&{7\over 2}\\{1\over 2}&0&3&4&{7\over 2}\\7&2&0&0&8\end{array}\right],$$
 $$\mbox{and}\hspace{2.5cm} M_3^2=\left[\begin{array}{cccccc} 12&48&8&8&8&24\\1&25&8/3&8/3&8/3&8\\0&0&17/3&5/3&5/3&0\\0&0&5/3&17/3&5/3&0\\0&0&5/3&5/3&17/3&0\\6&6&1&1&1&12\end{array}\right].$$
 Both $M_1^3$ and $M_2^2$
   are irreducible nonnegative matrices, whose first row   consist of  positive numbers. Thus for $i\in\{1,2\}$,   $\rho(M_i)$ is an eigenvalue of $M_i$ of multiplicity one  by Theorem \ref{genPFthm}, and it is the unique eigenvalue of $M_i$ with modulus $\rho(M_i)$ by Proposition \ref{propuniquerho}. Although the matrix $M_3^2$ is reducible, we can still conclude the same property for $\rho(M_3)$, by applying a consequence of Theorem \ref{genPFthm}  (see Proposition \ref{propMatrixdelPezzo}).
\end{example}

\begin{remark}
  The matrix $\tilde M_1$   in Example   \ref{matX1} or another matrix $\tilde M_1'$ with respect to the $\mathbb{Q}$-basis  $\{\mathbbm1,  c_1, E_1, [pt]\}$ of
   $H^*(X_1)$ both satisfy the condition (1) in Theorem \ref{genPFthm}. However,   there does not exist positive integer $k$ such that $\tilde M_1^k$ or $(\tilde M_1')^k$ is a nonnegative matrix. The situation for $X_2$ is the same. The condition  (1) in Theorem \ref{genPFthm} even fails for $\tilde M_3$.

\end{remark}

\subsection{Vanishing properties of Gromov-Witten invariants of del Pezzo surfaces}
In this subsection, we assume  $3\leq r\leq 8$ and simply denote  $c_1:=c_1(X_r)$.
We denote
$$\mbox{Amb}_r:=\mathbb{Q}\mathbbm1\oplus \mathbb{Q}c_1\oplus \mathbb{Q}[pt]\quad\mbox{and}\quad \mbox{Prim}_r:=\{\gamma\in H^2(X_r):c_1\cup\gamma=0\}.$$
Then we obtain $
H^*(X_r)=\mbox{Amb}_r\oplus\mbox{Prim}_r
$ as an orthogonal decomposition  of vector spaces  with respect to the Poincar\'e pairing. Moreover,  $\mbox{Amb}_r$
is the subalgebra of $H^*(X_r)$ generated by $c_1$.

The main result of this subsection is the following, which will be used to simplify part of the entries of the matrix corresponding to the operator $\hat c_1$ in section \ref{propMatrixdelPezzo},
and to   guarantee   the vanishing property of the primitive part of the $J$-function of $X_r$  in section \ref{GammaconjectureIfornontoricdelPezzosurfaces}.

\begin{thm}\label{vanishingproperty}
For any {\upshape $m, a, a_i\in \mathbb{Z}_{\geq 0},  \gamma\in \mbox{Prim}_r$}  and   {\upshape $\gamma_i\in\mbox{Amb}_r$}, $i=1,\cdots, m$, we have

\ba\label{vanishingpropertyformula}
\suml_{A\in H_2(X_r, \bZ)}\<\tau_a(\gamma)\prodl_{i=1}^{m}\tau_{a_i}(\gamma_i)\>_{A}=0.
\na
\end{thm}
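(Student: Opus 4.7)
\smallskip
\noindent\emph{Proof proposal.} The plan is to exploit the Weyl-group symmetry of the Picard lattice of $X_r$ together with the deformation invariance of Gromov--Witten invariants. For $3\leq r\leq 8$, the orthogonal complement $\mbox{Prim}_r$ of $\mathbb{Q}c_1$ in $H^2(X_r,\mathbb{Q})$ (with respect to the intersection pairing) is the root lattice of a simply-laced root system $R_r$, namely of type $A_1\oplus A_2,\,A_4,\,D_5,\,E_6,\,E_7,\,E_8$ for $r=3,\dots,8$ respectively. Its Weyl group $W_r$ acts on $H^*(X_r,\mathbb{Q})$ by orthogonal transformations fixing $\mathbbm{1}$, $c_1$ and $[pt]$ pointwise --- hence fixing $\mbox{Amb}_r$ pointwise --- while acting on $\mbox{Prim}_r$ via its reflection representation, which has no nonzero $W_r$-fixed vector.

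This $W_r$-action is realized geometrically. The reflections $s_{E_i-E_j}$ come from relabeling the blown-up points, which is a biholomorphism of $X_r$, and the reflections $s_{H-E_i-E_j-E_k}$ arise as monodromy of loops in the moduli of del Pezzo surfaces (geometrically, Cremona transformations centered at three of the blown-up points), and together these reflections generate $W_r$. Since Gromov--Witten invariants are deformation invariants --- in particular invariant under biholomorphisms and under monodromy around loops in any smooth projective family --- for every $w\in W_r$, every $A\in H_2(X_r,\mathbb{Z})$, and all insertions,
$$\left\langle \tau_a(\gamma)\prod_{i=1}^m\tau_{a_i}(\gamma_i)\right\rangle_A=\left\langle \tau_a(w\gamma)\prod_{i=1}^m\tau_{a_i}(w\gamma_i)\right\rangle_{w(A)}.$$

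Since $w\gamma_i=\gamma_i$ (as $\gamma_i\in\mbox{Amb}_r$) and $w$ permutes $H_2(X_r,\mathbb{Z})$, summing over $A$ gives
$$\sum_A\left\langle \tau_a(\gamma)\prod_i\tau_{a_i}(\gamma_i)\right\rangle_A=\sum_A\left\langle \tau_a(w\gamma)\prod_i\tau_{a_i}(\gamma_i)\right\rangle_A.$$
Averaging over $w\in W_r$ and using multilinearity of $\tau_a$ in its cohomology argument replaces $\gamma$ on the right by its $W_r$-average $\bar\gamma\in\mbox{Prim}_r$, which vanishes because $W_r$ has no nonzero fixed vector in $\mbox{Prim}_r$. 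Hence the sum equals $0$, as desired. The main obstacle is to rigorously realize the reflections $s_{H-E_i-E_j-E_k}$ via monodromy in a smooth family of del Pezzo surfaces --- the permutation reflections $s_{E_i-E_j}$ being immediate --- but this is classical (essentially due to Du Val and Manin); as a fallback one may instead produce the required identity on Gromov--Witten invariants directly using the WDVV equations together with the reconstruction results of \cite{GoPa}.
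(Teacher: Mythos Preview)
Your argument is correct and considerably more streamlined than the paper's. Both approaches rest on the same geometric input---the $W_r$-equivariance of Gromov--Witten invariants coming from the Cremona isomorphism together with deformation invariance (this is exactly what underlies the paper's Lemmas~\ref{symmetry1} and~\ref{symmetry2}, and the argument in \cite[Section 5.1]{GoPa} that the paper cites works for \emph{arbitrary} insertions, not just ambient ones). The difference is in how this symmetry is exploited. The paper states the symmetry only for correlators with all insertions in $\mbox{Amb}_r$, uses it via the Divisor Axiom to obtain the key identity $\sum_A(\int_A\gamma)\langle\cdots\rangle_A=0$ (Lemma~\ref{divisortakenout}), and from there must run a somewhat intricate double induction using TRR and the Fundamental Class Axiom (Propositions~\ref{initial}, \ref{positivedegree}, and the final proof) to bootstrap to general descendant insertions $\tau_a(\gamma)$. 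You instead apply the $W_r$-symmetry directly to the correlator with the primitive insertion $\tau_a(\gamma)$ already present, so that averaging over $W_r$ kills $\gamma$ in a single stroke---no TRR, no induction. Your route is shorter and more conceptual; the paper's route is more explicit about the GW axiomatics and stays closer to what is literally written in \cite{GoPa}. The ``obstacle'' you flag (realizing $s_{H-E_i-E_j-E_k}$ geometrically) is precisely the content of the paper's Lemma~\ref{symmetry1} and is no harder than what the paper already assumes, so there is no real gap.
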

\begin{remark} There are only finitely many nonzero terms in the above summation by \cite[Corollary 1.19]{KoMo} together with the observation that
   $\<\tau_a(\gamma)\prodl_{i=1}^{m}\tau_{a_i}(\gamma_i)\>_{A}$ is nonzero  only if $A$ is an effective class with
$
\int_Ac_1=(a+\frac12\deg\gamma)+\suml_{i=1}^m(a_i+\frac12\deg\gamma_i)-m.
$

\end{remark}

The rest of this subsection is devoted to a proof of Theorem \ref{vanishingproperty}.

\begin{lemma}\label{degreezero}
For any {\upshape $m, a, a_i\in \mathbb{Z}_{\geq 0},  \gamma\in \mbox{Prim}_r$}  and   {\upshape $\gamma_i\in\mbox{Amb}_r$}, $i=1,\cdots, m$, we have
\ban
\<\tau_a(\gamma)\prodl_{i=1}^{m}\tau_{a_{i}}(\gamma_i)\>_{0}=0
\nan
\end{lemma}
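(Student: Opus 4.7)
The plan is to reduce the degree-zero Gromov-Witten invariant to a product of a classical intersection number on $X_r$ and a $\psi$-class integral over the Deligne-Mumford space, then observe that the intersection-theoretic factor vanishes by the defining property of $\mathrm{Prim}_r$.

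First, I would handle the edge cases: the moduli space $\overline{M}_{0,k}(X_r,0)$ is empty when $k<3$, so we may assume $k := m+1 \geq 3$; otherwise the correlator is zero by convention. For $k\geq 3$ there is a natural isomorphism $\overline{M}_{0,k}(X_r,0)\cong \overline{M}_{0,k}\times X_r$ sending a constant stable map $(f:C\to X_r;p_1,\ldots,p_k)$ to $([C;p_1,\ldots,p_k],f(C))$. Under this identification, each evaluation $ev_i$ is projection to the $X_r$-factor while each cotangent line bundle $\mathcal{L}_i$ is pulled back from the $\overline{M}_{0,k}$-factor. Since $H^1(C,f^*TX_r)=0$ for every constant map from a genus-zero curve, the obstruction sheaf vanishes and $[\overline{M}_{0,k}(X_r,0)]^{\mathrm{virt}}$ is the usual fundamental class.

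Second, exploiting this product structure, the correlator factors as
\begin{align*}
\Big\langle \tau_a(\gamma)\prod_{i=1}^m \tau_{a_i}(\gamma_i)\Big\rangle_0 = \Big(\int_{\overline{M}_{0,m+1}} c_1(\mathcal{L}_1)^a \prod_{i=1}^m c_1(\mathcal{L}_{i+1})^{a_i}\Big)\cdot \int_{X_r} \gamma\cup \gamma_1\cup\cdots\cup \gamma_m.
\end{align*}

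Third, the main step is showing the classical intersection integral vanishes. Since $\deg\gamma=2$ and each $\gamma_i\in\mathrm{Amb}_r=\mathbb{Q}\mathbbm{1}\oplus\mathbb{Q}c_1\oplus\mathbb{Q}[pt]$ is homogeneous of degree $0$, $2$, or $4$, the cup product $\gamma\cup\prod_i\gamma_i$ can land in $H^4(X_r)$ only if exactly one $\gamma_i$ is a scalar multiple of $c_1$ and every other $\gamma_i$ is a scalar multiple of $\mathbbm{1}$ (any $[pt]$-factor, or two degree-$2$ factors, would annihilate the product for dimension reasons). In that sole surviving case, the product reduces to a scalar multiple of $\gamma\cup c_1$, and $\int_{X_r} \gamma\cup c_1 = c_1\cdot\gamma = 0$ by the very definition of $\mathrm{Prim}_r$. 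This establishes the lemma. No serious obstacle is anticipated: once the reduction to classical intersection theory is in place, the vanishing is a short case analysis.
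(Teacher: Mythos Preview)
Your proposal is correct and follows essentially the same approach as the paper: both use the product decomposition $\overline{M}_{0,m+1}(X_r,0)\cong X_r\times\overline{M}_{0,m+1}$ with trivial obstruction theory to reduce the correlator to a classical intersection on $X_r$, which then vanishes by the defining orthogonality $c_1\cdot\gamma=0$ for $\gamma\in\mathrm{Prim}_r$. Your write-up simply unpacks the paper's two-line argument in more detail, including the edge case $m+1<3$ and the explicit degree bookkeeping.
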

\begin{proof}
Note that the obstruction theory on $\overline M_{0,m+1}(X_r,0)=X_r\times\overline M_{0,m+1}$ is trivial. So the vanishing result follows from the definition of $\mbox{Prim}_r$.
\end{proof}

Gromov-Witten invariants of $X_r$ admit two types of symmetry. 
The classical Cremona transformation of $\bP^2$ induces an involution $\sigma$ on $H_2(X_r,\bZ)$,  given by
$$\sigma(dH-\suml_{k=1}^rd_kE_k)
=
(2d-d_1-d_2-d_3)H-\sum_{1\leq k\leq 3}(d-d_1-d_2-d_3+d_k)E_k-\suml_{4\leq k\leq r}d_kE_k.
$$
This leads to the first type of symmetry below,  by  the argument in \cite[Section 5.1]{GoPa}.
\begin{lemma}\label{symmetry1}
For any {\upshape $m, a, a_i\in \mathbb{Z}_{\geq 0}$}  and   {\upshape $\gamma_i\in\mbox{Amb}_r$}, $i=1,\cdots, m$, we have

\ban
\<\prodl_{i=1}^{m}\tau_{a_i}(\gamma_i)\>_{A}=\<\prodl_{i=1}^{m}\tau_{a_i}(\gamma_i)\>_{\sigma(A)}\quad\mbox{for any } A\in H_2(X_r, \mathbb{Z}).
\nan
\end{lemma}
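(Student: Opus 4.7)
\textbf{Proof plan for Lemma \ref{symmetry1}.} The plan is to realize $\sigma$ geometrically as a self-equivalence of $X_r$ and to invoke functoriality of descendant Gromov--Witten invariants under automorphisms. The classical Cremona transformation $\mathrm{Cr}\colon \bP^2\dashrightarrow\bP^2$ based at three of the blown-up points $p_1,p_2,p_3$ resolves, after those points are blown up, to a biregular automorphism of $X_3$ interchanging each exceptional divisor $E_i$ ($1\leq i\leq 3$) with the proper transform $H-E_j-E_k$ of the line through $p_j,p_k$. When the remaining points $p_4,\ldots,p_r$ are in sufficiently general position, this lifts to a (pseudo-)automorphism $\tilde\sigma\colon X_r\to X_r$ whose induced action on $H_2(X_r,\bZ)$ is exactly the map $\sigma$ described in the statement.

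Once $\sigma$ is realized geometrically, the next step is to observe that $\tilde\sigma^\ast$ fixes every element of $\mbox{Amb}_r$. Indeed, any biregular automorphism of $X_r$ preserves the unit $\mathbbm{1}$, the point class $[pt]$, and the anticanonical class $c_1=-K_{X_r}$; hence $\tilde\sigma^\ast\gamma_i=\gamma_i$ for each $\gamma_i\in\mbox{Amb}_r$.

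Functoriality of Gromov--Witten theory then produces the identity. The automorphism $\tilde\sigma$ induces an isomorphism $\overline{\mathcal{M}}_{0,m}(X_r,A)\xrightarrow{\sim}\overline{\mathcal{M}}_{0,m}(X_r,\sigma(A))$ that intertwines the virtual fundamental classes, the evaluation maps $ev_i$, and the cotangent line bundles $\mathcal{L}_i$. Pulling back the integrand and using that each $\gamma_i$ is fixed yields
\begin{equation*}
\Big\langle\prod_{i=1}^m\tau_{a_i}(\gamma_i)\Big\rangle_A=\Big\langle\prod_{i=1}^m\tau_{a_i}(\tilde\sigma^\ast\gamma_i)\Big\rangle_{\sigma(A)}=\Big\langle\prod_{i=1}^m\tau_{a_i}(\gamma_i)\Big\rangle_{\sigma(A)}.
\end{equation*}

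The main obstacle is that the lifted map $\tilde\sigma$ need not be biregular for arbitrary positions of the blown-up points; at special configurations it can contract curves and thus fail to be an honest automorphism. Since genus-zero GW invariants are invariant under complex deformations within the same diffeomorphism class of $X_r$, one may pass to a generic configuration for which $\tilde\sigma$ is biregular (equivalently, exploit symplectic deformation invariance together with a diffeomorphism realizing $\sigma$), exactly in the spirit of the argument used in \cite[Section 5.1]{GoPa}.
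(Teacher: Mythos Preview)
Your proposal is correct and follows the same approach the paper invokes, namely the argument of \cite[Section 5.1]{GoPa}: realize $\sigma$ via the lifted Cremona transformation, observe that it fixes every class in $\mbox{Amb}_r$, and combine functoriality of descendant invariants with deformation invariance. One small clarification worth making explicit: for $r\geq 4$ the lifted Cremona is generically an isomorphism $X_r\to \bar X_r$ onto a \emph{different} blowup (the images $\mathrm{Cr}(p_4),\ldots,\mathrm{Cr}(p_r)$ are not projectively equivalent to $p_4,\ldots,p_r$) rather than a self-map of $X_r$, so the deformation-invariance step is essential to the argument and not merely a fix for degenerate configurations---exactly as in the paper's parallel proof of Lemma~\ref{symmetry2}.
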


For each $j\in\{1,\cdots,r-1\}$, we define an involution $\sigma_j$ on $H_2(X_r,\bZ)$ by
\ban
\sigma_j(dH-\suml_{k=1}^rd_kE_k)=dH-\suml_{\substack{1\leq k\leq r\\k\neq j,j+1}}d_kE_k-d_{j+1}E_j-d_jE_{j+1}.
\nan
Inspired by \cite[Section 5.1]{GoPa}, we obtain the second type of symmetry as follows.
\begin{lemma}\label{symmetry2}
 For  {\upshape $1\leq j\leq r-1, m, a, a_i\in \mathbb{Z}_{\geq 0}$}  and   {\upshape $\gamma_i\in\mbox{Amb}_r$}, $i=1,\cdots, m$, we have

\ban
\<\prodl_{i=1}^{m}\tau_{a_i}(\gamma_i)\>_{A}=\<\prodl_{i=1}^{m}\tau_{a_i}(\gamma_i)\>_{\sigma_j(A)}\quad\mbox{for any } A\in H_2(X_r, \mathbb{Z}).
\nan
\end{lemma}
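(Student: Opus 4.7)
The plan is to realize the involution $\sigma_j$ as the action on $H_2(X_r,\mathbb{Z})$ of a self-diffeomorphism of $X_r$ coming from a deformation of the blowup points, and then invoke the deformation (equivalently, diffeomorphism) invariance of Gromov--Witten invariants. This is parallel to the Cremona argument used for Lemma \ref{symmetry1} (and inspired by \cite[Section 5.1]{GoPa}), except that here the symmetry comes from permuting two blowup points rather than from a birational self-map of $\mathbb{P}^2$.

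More precisely, realise $X_r=\mathrm{Bl}_{p_1,\ldots,p_r}(\mathbb{P}^2)$ with exceptional divisors $E_1,\ldots,E_r$. Since the configuration space of ordered $r$-tuples of points in general position in $\mathbb{P}^2$ is path-connected, I would choose a smooth path $t\mapsto (p_1(t),\ldots,p_r(t))$ in this space with $(p_1(0),\ldots,p_r(0))=(p_1,\ldots,p_r)$ and such that $p_k(1)=p_k$ for $k\notin\{j,j+1\}$, while $p_j(1)=p_{j+1}$ and $p_{j+1}(1)=p_j$. Blowing up along this moving tuple produces a smooth family $\pi\colon \mathcal{X}\to[0,1]$ of del Pezzo surfaces with $\mathcal{X}_0\simeq\mathcal{X}_1\simeq X_r$; parallel transport along $\pi$ then yields a self-diffeomorphism $\phi\colon X_r\to X_r$. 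By tracking $H$ (pulled back from $\mathbb{P}^2$) and the exceptional divisors along the family, one sees that $\phi_*H=H$, $\phi_*E_j=E_{j+1}$, $\phi_*E_{j+1}=E_j$ and $\phi_*E_k=E_k$ for $k\neq j,j+1$, so that $\phi_*$ acts on $H_2(X_r,\mathbb{Z})$ precisely as $\sigma_j$.

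Now $\mbox{Amb}_r=\mathbb{Q}\mathbbm{1}\oplus\mathbb{Q}c_1\oplus\mathbb{Q}[pt]$ is fixed pointwise by $\phi^*$, since each of $\mathbbm{1}$, $c_1=3H-\sum_k E_k$ and $[pt]$ is manifestly preserved. Naturality of the gravitational Gromov--Witten invariants under the diffeomorphism $\phi$ (a consequence of the deformation invariance applied to the family $\pi$) therefore gives
$$
\<\prod_{i=1}^{m}\tau_{a_i}(\gamma_i)\>_{A}
=\<\prod_{i=1}^{m}\tau_{a_i}(\phi^*\gamma_i)\>_{\phi^{-1}_*A}
=\<\prod_{i=1}^{m}\tau_{a_i}(\gamma_i)\>_{\sigma_j(A)},
$$
where the last equality uses $\phi^*\gamma_i=\gamma_i$ and $\phi^{-1}_*=\phi_*=\sigma_j$ (an involution).

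The main point to be checked is that parallel transport through $\pi$ indeed realises the claimed action on $H_2$; concretely, that the cycle $E_j$ fibered over the moving point $p_j(t)$ deforms continuously to the cycle labeled $E_{j+1}$ on $\mathcal{X}_1\simeq X_r$. This can be verified by localising the argument to disjoint small neighborhoods of the two moving points, outside of which the family of blowups is trivial; everything else is routine naturality of descendants under isomorphisms, and no further Gromov--Witten input is needed.
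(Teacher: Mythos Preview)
Your argument is correct and follows essentially the same strategy as the paper's proof: both exploit deformation invariance of Gromov--Witten invariants together with the fact that swapping the $j$th and $(j{+}1)$st blowup points induces the involution $\sigma_j$ on $H_2(X_r,\mathbb{Z})$ while fixing $\mathbbm{1}$, $c_1$ and $[pt]$. The only difference is packaging: the paper works with two labelled copies $X_r$ and $\bar X_r$ and combines a deformation-invariance identity with the explicit relabelling isomorphism $\varphi_r$, whereas you realise the same composite directly as monodromy of a one-parameter family over $[0,1]$.
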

\begin{proof}
Let $X_0$ and $\bar X_0$ be two copies of $\bP^2$, and we fix an isomorphism $\varphi_0:X_0\cong\bar X_0$. Suppose that $p_1,\cdots,p_r\in X_0$ and $\bar p_1,\cdots,\bar p_r\in\bar X_0$ are in general position, such that
\ban
\varphi_0(p_k)=\left\{\begin{array}{cl}
\bar p_k,&k\neq j,j+1,\\
\bar p_{j+1},&k=j,\\
\bar p_j,&k=j+1.
\end{array}\right.
\nan

Let $X_r$ (resp. $\bar X_r$) be the blow-up of $X_0$ at $p_k$ (resp.   $\bar X_0$ at $\bar p_k$), $k=1,\cdots, r$, with $H$ (resp. $\bar H$) being the pullback of a line in $X_0$ (resp. $\bar X_0$) in general position, and $E_k$ (resp. $\bar E_k$) being the exceptional divisor corresponding to $p_k$ (resp. $\bar p_k$).
 For $\gamma=x \mathbbm1_{X_r} +y c_1(X_r) +z [pt]_{X_r}\in \mbox{Amb}(X_r)$, we denote $\bar \gamma=x \mathbbm1_{\bar X_r} +y c_1(\bar X_r) +z [pt]_{\bar X_r}\in \mbox{Amb}(\bar X_r)$. On one hand, by deformation invariance of Gromov-Witten invariants, we have
\ba\label{symmetry21} \<\prodl_{i=1}^{m}\tau_{a_i}(\gamma_i)\>_{dH-\suml_{k=1}^rd_kE_k}^{X_r}=\<\prodl_{i=1}^{m}\tau_{a_i}(\bar \gamma_i)\>_{d\bar H-\suml_{k=1}^rd_k\bar E_k}^{\bar X_r}.
\na
On the other hand,  we have an isomorphism $\varphi_r:X_r\overset{\cong}{\longrightarrow}\bar X_r$ naturally induced from $\varphi_0$. The    induced isomorphisms $\varphi_r^*: H^*(\bar X_r,\mathbb{Z})\to H^*(X_r, \mathbb{Z})$  and  $(\varphi_r)_*: H_*(X_r,\mathbb{Z})\to H_*(\bar X_r, \mathbb{Z})$  satisfy the following properties:
$\varphi_r^*(\bar H)=H,\quad  (\varphi_r)_*(H)=\bar H$
$$\varphi_r^*(\bar E_i)=\left\{\begin{array}{cl}
E_i,&i\neq j,j+1,\\
E_{j+1},&i=j,\\
E_j,&i=j+1,
\end{array}\right.\quad\mbox{and}\quad  (\varphi_r)_*(E_i)=\left\{\begin{array}{cl}
\bar E_i,&i\neq j,j+1,\\
\bar E_{j+1},&i=j,\\
\bar E_j,&i=j+1.
\end{array}\right.
$$
Here we remind of our notation convention that divisors are  naturally treated as  (co)homology classes in the corresponding setting. Consequently, we have
$\varphi_r^*(c_1(\bar X_r))=c_1(X_r)$,   which implies $\varphi_r^*(\mathbbm1_{\bar X_r})=\mathbbm1_{X_r}$ and
$\varphi_r^*([pt]_{\bar X_r})=[pt]_{X_r}$. Therefore
\ba\label{symmetry22} \<\prodl_{i=1}^{m}\tau_{a_i}(\gamma_i)\>_{dH-\suml_{k=1}^rd_kE_k}^{X_r}&=&\<\prodl_{i=1}^{m}\tau_{a_i}((\varphi_r^*)^{-1}(\gamma_i))\>_{(\varphi_r)_*d(\bar H-\suml_{k=1}^rd_k\bar E_k)}^{\bar X_r}\\
&=&  \<\prodl_{i=1}^{m}\tau_{a_i}(\bar \gamma_i)\>_{d\bar H-\suml_{\substack{1\leq k\leq r\\k\neq j,j+1}}d_k\bar E_k-d_{j+1}\bar E_j-d_j\bar E_{j+1}}^{\bar X_r}.\nonumber
\na
Now the required result follows from \eqref{symmetry21} and \eqref{symmetry22}.
\end{proof}


\begin{lemma}\label{divisortakenout} For   {\upshape $m, a, a_i\in \mathbb{Z}_{\geq 0},  \gamma\in \mbox{Prim}_r$}  and   {\upshape $\gamma_i\in\mbox{Amb}_r$}, $i=1,\cdots, m$, we have
 \ban
\suml_{A\in H_2(X_r,\mathbb{Z})}(\int_A\gamma)\<\prodl_{i=1}^{m}\tau_{a_i}(\gamma_i)\>_{A}=0.
\nan
\end{lemma}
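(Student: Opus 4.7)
The plan is to use the symmetry Lemmas \ref{symmetry1} and \ref{symmetry2} to average the divisor factor $\int_A\gamma$ over a finite group, and then to observe that the averaged class is forced to vanish in $\mbox{Prim}_r$.

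Let $W$ be the subgroup of $\mathrm{GL}(H_2(X_r,\mathbb{Q}))$ generated by $\sigma$ and $\sigma_1,\ldots,\sigma_{r-1}$. Each generator is an involution that preserves the intersection pairing and fixes $c_1$, so $W$ acts compatibly on both $H_2$ and $H^2$ via the identification $\int_{wA}\gamma=\int_A w^{-1}\gamma$, and stabilises the orthogonal decomposition $H^2(X_r,\mathbb{Q})=\mathbb{Q}c_1\oplus\mbox{Prim}_r$ (which is honest since $c_1^2=9-r>0$). Since $\mbox{Prim}_r$ is negative definite, $W$ is finite. By Lemmas \ref{symmetry1} and \ref{symmetry2}, the invariant $\<\prodl_i\tau_{a_i}(\gamma_i)\>_A$ is constant on each $W$-orbit, so reindexing $A\mapsto wA$ gives
$$\suml_A(\int_A\gamma)\<\prodl_i\tau_{a_i}(\gamma_i)\>_A=\suml_A(\int_A w^{-1}\gamma)\<\prodl_i\tau_{a_i}(\gamma_i)\>_A$$
for every $w\in W$. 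Averaging over $W$ replaces $\gamma$ by $\bar\gamma:=|W|^{-1}\suml_{w\in W}w^{-1}\gamma$, so it suffices to show that $\bar\gamma=0$.

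The class $\bar\gamma$ is $W$-invariant and lies in $\mbox{Prim}_r$. The subgroup of $W$ generated by the $\sigma_j$'s is the full symmetric group $S_r$ permuting the exceptional classes, so every $S_r$-invariant element of $H^2(X_r,\mathbb{Q})$ has the form $dH+e\suml_k E_k$; the primitivity condition $3d+re=0$ cuts this down to the one-dimensional subspace spanned by $v:=rH-3\suml_{k=1}^r E_k$. A direct computation using the explicit formula for $\sigma$ yields $\sigma(v)-v=(r-9)(H-E_1-E_2-E_3)$, which is nonzero for $3\leq r\leq 8$, so no nonzero multiple of $v$ is $\sigma$-fixed; the space of $W$-invariants in $\mbox{Prim}_r$ is therefore trivial, giving $\bar\gamma=0$. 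The main obstacle is precisely this last Weyl-type computation: the factor $r-9$ makes clear why the argument hinges on the del Pezzo range $r\leq 8$, as for $r=9$ the class $v$ would be Cremona-fixed and the averaging argument would break down.
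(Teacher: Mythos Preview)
Your proof is correct but organized differently from the paper's. The paper works with the explicit basis $\{H-E_1-E_2-E_3,\ E_j-E_{j+1}:1\le j\le r-1\}$ of $\mbox{Prim}_r$ and, for each basis vector $\alpha$, averages only over the single reflection $s_\alpha$ (namely $\sigma$ for $H-E_1-E_2-E_3$ and $\sigma_j$ for $E_j-E_{j+1}$); since $s_\alpha(\alpha)=-\alpha$ one has $\int_{A+s_\alpha(A)}\alpha=0$, and the sum vanishes immediately for each basis element. You instead average over the full group $W=\langle\sigma,\sigma_1,\dots,\sigma_{r-1}\rangle$ and then prove $(\mbox{Prim}_r)^W=0$. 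The paper's approach is shorter and sidesteps both the finiteness of $W$ and the computation of $\sigma(v)$, because each simple root is already anti-invariant under its own reflection; your approach is more structural, casting the vanishing as the absence of a trivial summand in the Weyl-group representation on the root lattice $\mbox{Prim}_r$, and your closing remark about $r=9$ correctly identifies the affine boundary of the del Pezzo range where that representation acquires an invariant and the argument would break.
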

\begin{proof}  It suffices to show the statement for  $\gamma=H-E_1-E_2-E_3$ and $\gamma=E_j-E_{j+1}(1\leq j\leq r-1)$, since these classes form   a basis of $\mbox{Prim}_r$.

For $\gamma=H-E_1-E_2-E_3$, by Lemma \ref{symmetry1}, we have
\ban
&&\suml_{A}(\int_A\gamma)\<\prodl_{i=1}^{m}\tau_{a_i}(\gamma_i)\>_{A}\\
&=&\frac12\suml_{A}(\int_A\gamma)\<\prodl_{i=1}^{m}\tau_{a_i}(\gamma_i)\>_{A}+\frac12\suml_{A}(\int_{\sigma(A)}\gamma)\<\prodl_{i=1}^{m}\tau_{a_i}(\gamma_i)\>_{\sigma(A)}\\
&=&\suml_{A}\frac12(\int_{A+\sigma(A)}\gamma)\<\prodl_{i=1}^{m}\tau_{a_i}(\gamma_i)\>_{A}.
\nan
Now the required vanishing result follows from $\int_{A+\sigma(A)}(H-E_1-E_2-E_3)=0$.

For $\gamma=E_j-E_{j+1}(1\leq j\leq r-1)$, by Lemma \ref{symmetry2}, we have
\ban
&&\suml_{A}(\int_A\gamma)\<\prodl_{i=1}^{m}\tau_{a_i}(\gamma_i)\>_{A}\\
&=&\frac12\suml_{A}(\int_A\gamma)\<\prodl_{i=1}^{m}\tau_{a_i}(\gamma_i)\>_{A}+\frac12\suml_{A}(\int_{\sigma_j(A)}\gamma)\<\prodl_{i=1}^{m}\tau_{a_i}(\gamma_i)\>_{\sigma_j(A)}\\
&=&\suml_{A}\frac12(\int_{A+\sigma_j(A)}\gamma)\<\prodl_{i=1}^{m}\tau_{a_i}(\gamma_i)\>_{A}.
\nan
Now the required vanishing result follows from $\int_{A+\sigma_j(A)}(E_j-E_{j+1})=0$.
\end{proof}


\begin{prop}\label{initial}
Let   {\upshape $m, a, a_i\in \mathbb{Z}_{\geq 0},  \gamma\in \mbox{Prim}_r$}  and   {\upshape $\gamma_i\in\mbox{Amb}_r$}, $i=1,\cdots, m$.
If $\gamma_i\in H^{>0}(X_r)$ whenever $a_i\neq 0$, then we have
 \ban
\suml_{A\in H_2(X_r,\mathbb{Z})}\<\tau_0(\gamma)\prodl_{i=1}^{m}\tau_{a_i}(\gamma_i)\>_{A}=0.
\nan
\end{prop}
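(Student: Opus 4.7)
The plan is to reduce the identity to Lemma \ref{divisortakenout} via the Divisor Axiom. Since $\gamma \in \mbox{Prim}_r \subset H^2(X_r)$ and the insertion $\tau_0(\gamma)$ has no $\psi$-class, the Divisor Axiom (see \cite[section 10.1.2]{CoKa}) applied to the marked point carrying $\tau_0(\gamma)$ gives, for any $A\in H_2(X_r,\mathbb{Z})$ with $A\neq 0$ (or $m+1\geq 3$),
\begin{equation*}
\Big\langle \tau_0(\gamma)\prod_{i=1}^{m}\tau_{a_i}(\gamma_i)\Big\rangle_{A}
= \Big(\int_A \gamma\Big)\Big\langle \prod_{i=1}^{m}\tau_{a_i}(\gamma_i)\Big\rangle_{A}
+\sum_{\substack{1\leq i\leq m\\ a_i\geq 1}}\Big\langle \tau_{a_i-1}(\gamma\cup\gamma_i)\prod_{j\neq i}\tau_{a_j}(\gamma_j)\Big\rangle_{A}.
\end{equation*}

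The key observation is that the hypothesis forces every correction term to vanish. Indeed, for each index $i$ with $a_i\geq 1$, the hypothesis $\gamma_i\in H^{>0}(X_r)\cap \mbox{Amb}_r$ leaves only the two possibilities $\gamma_i\in\mathbb{Q}c_1$ or $\gamma_i\in\mathbb{Q}[pt]$. In the first case, $\gamma\cup\gamma_i=0$ by the very definition of $\mbox{Prim}_r$; in the second case, $\gamma\cup\gamma_i\in H^6(X_r)=0$ by dimension reasons. Hence only the first term on the right-hand side survives.

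Summing the resulting identity over $A\neq 0$ and invoking Lemma \ref{divisortakenout} (which applies since $\gamma\in\mbox{Prim}_r$ and each $\gamma_i\in\mbox{Amb}_r$) gives $\sum_{A\neq 0}\langle\tau_0(\gamma)\prod_i\tau_{a_i}(\gamma_i)\rangle_A=0$. The remaining contribution from $A=0$ vanishes by Lemma \ref{degreezero}, so the total sum is zero. The only non-routine check is the disappearance of the descendent correction terms, which is precisely where the primitive hypothesis on $\gamma$ and the degree constraint $\gamma_i\in H^{>0}$ are used in tandem; everything else is a mechanical combination of the Divisor Axiom and the two lemmas already established.
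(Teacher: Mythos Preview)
Your argument is correct and follows essentially the same route as the paper's own proof: both reduce to Lemma~\ref{divisortakenout} via the Divisor Axiom after observing that $\gamma\cup\gamma_i=0$ whenever $\gamma_i\in H^{>0}(X_r)\cap\mbox{Amb}_r$, and handle $A=0$ by Lemma~\ref{degreezero}. The only cosmetic difference is that the paper first strips off any insertions $\tau_0(\mathbbm{1})$ with the Fundamental Class Axiom (thereby reducing to the case where every $\gamma_i\in H^{>0}$), whereas you apply the descendant Divisor Axiom directly and note that the correction terms arise only for $a_i\geq 1$, where the hypothesis already forces $\gamma_i\in H^{>0}$; either way the correction terms vanish for the same reason.
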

\begin{proof} It suffices to show the vanishing of $\suml_{A\neq0}\<\tau_0(\gamma)\prodl_{i=1}^{m}\tau_{a_i}(\gamma_i)\>_{A}$ due to Lemma \ref{degreezero}. We first assume $\gamma_i\in H^{>0}(X_r)$ for all $i$, so that    $\gamma\cup \gamma_i=0$ always holds. Hence, we have
 \ban
\suml_{A\neq0}\<\tau_0(\gamma)\prodl_{i=1}^{m}\tau_{a_i}(\gamma_i)\>_{A}=\suml_{A\neq 0}(\int_A\gamma)\<\prodl_{i=1}^{m}\tau_{a_i}(\gamma_i)\>_{A}=0.
\nan
 by using the Divisor Axiom and Lemma \ref{divisortakenout}.
Then the general statement follows immediately from the Fundamental Class Axiom.
\end{proof}


\begin{prop}\label{positivedegree}Let   {\upshape $m, a, a_i\in \mathbb{Z}_{\geq 0},  \gamma\in \mbox{Prim}_r$}  and   {\upshape $\gamma_i\in\mbox{Amb}_r$}, $i=1,\cdots, m$.
If $\gamma_i\in H^{>0}(X_r)$ for all $i$, then we have
 \ban
\suml_{A\in H_2(X_r, \mathbb{Z})}\<\tau_a(\gamma)\prodl_{i=1}^{m}\tau_{a_i}(\gamma_i)\>_{A}=0.
\nan
\end{prop}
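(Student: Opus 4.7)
The plan is to induct on $a$ while proving a strengthened version of the statement in which $\gamma_i \in H^{>0}(X_r)$ is required only when $a_i \neq 0$ (so that $\tau_0(\mathbbm1)$-insertions become permissible). Proposition \ref{positivedegree} is then a special case, and the base $a=0$ is exactly Proposition \ref{initial}. The inductive step at $a \geq 1$ splits into three subcases according to $m$, because the Topological Recursion Relation requires at least three marked points.

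First, for $m \geq 2$, I apply TRR to lower the descendant on $\tau_a(\gamma)$, placing $\tau_{a_1}(\gamma_1)$ and $\tau_{a_2}(\gamma_2)$ on the two marked points that TRR separates. Summing over $A = A_1+A_2$ and choosing a homogeneous basis $\{T_k\}$ of $H^*(X_r)$ adapted to the orthogonal decomposition $\mbox{Amb}_r \oplus \mbox{Prim}_r$ so that $T^k$ lies in the same summand as $T_k$, each TRR-summand becomes a product of two marginal sums. When $T_k \in \mbox{Amb}_r$, the first marginal sum $\suml_{A_1}\<\tau_{a-1}(\gamma)\tau_0(T_k)\prodl_{i\in I}\tau_{a_i}(\gamma_i)\>_{A_1}$ has only $\mbox{Amb}_r$-insertions besides the primitive $\tau_{a-1}(\gamma)$, and $\tau_0(T_k)$ is harmless even if $T_k=\mathbbm1$ because its descendant is $0$; the inductive hypothesis therefore kills it. When $T_k \in \mbox{Prim}_r$, the second marginal sum $\suml_{A_2}\<\tau_0(T^k)\tau_{a_1}(\gamma_1)\tau_{a_2}(\gamma_2)\prodl_{j\in J}\tau_{a_j}(\gamma_j)\>_{A_2}$ has a primitive insertion at descendant $0$ with the rest in $\mbox{Amb}_r \cap H^{>0}$, and Proposition \ref{initial} applies.

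Next, for $m=1$ with $\gamma_1=\mathbbm1$ (so $a_1=0$), the Fundamental Class Axiom gives $\<\tau_a(\gamma)\tau_0(\mathbbm1)\>_A = \<\tau_{a-1}(\gamma)\>_A$ for $A \neq 0$, and summing reduces to the $(m=0)$ case at level $a-1$, covered by the inductive hypothesis. For $\gamma_1 \in H^{>0}$ and any target $a_1 \geq 0$, I instead run the $m \geq 2$ argument on $\suml_A \<\tau_a(\gamma)\tau_0(\mathbbm1)\tau_{a_1+1}(\gamma_1)\>_A$ (the strengthened hypothesis is preserved since $a_1+1 \geq 1$ and $\gamma_1 \in H^{>0}$), concluding this sum vanishes; then FCA expands it to
\[
0 = \suml_A \<\tau_{a-1}(\gamma)\tau_{a_1+1}(\gamma_1)\>_A + \suml_A \<\tau_a(\gamma)\tau_{a_1}(\gamma_1)\>_A,
\]
in which the first sum vanishes by induction at level $a-1$, yielding the desired vanishing of the second. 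Finally, for $m=0$, the Divisor Axiom applied to $c_1$ gives $\<\tau_0(c_1)\tau_a(\gamma)\>_A = (\int_A c_1)\<\tau_a(\gamma)\>_A + \<\tau_{a-1}(c_1 \cup \gamma)\>_A$; since $c_1 \cup \gamma = 0$ by the definition of $\mbox{Prim}_r$ and the virtual dimension constraint forces $\int_A c_1 = a+1$ on every contributing class, summing yields $\suml_A \<\tau_0(c_1)\tau_a(\gamma)\>_A = (a+1)\suml_A\<\tau_a(\gamma)\>_A$. The left-hand side is the $m=1$ case just established (with $\gamma_1 = c_1 \in H^{>0}$, $a_1=0$), hence $\suml_A\<\tau_a(\gamma)\>_A = 0$.

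The main obstacle is reconciling the TRR step, which needs three marked points, with the low-$m$ cases. Augmentation by a $\tau_0(\mathbbm1)$ bypasses this, but only once the statement is strengthened to admit such insertions; this is why Proposition \ref{initial} must be invoked in its full form (with $\gamma_i \in H^{>0}$ only when $a_i \neq 0$) rather than the narrower Proposition \ref{positivedegree}, and it is why I induct on the stronger form throughout.
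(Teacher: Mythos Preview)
Your proof is correct and follows the paper's overall strategy: induct on $a$, use TRR for $m\geq 2$, then reduce $m=1$ and $m=0$. The differences are technical rather than structural. The paper does not strengthen the inductive hypothesis; in the $m\geq 2$ TRR step it handles the node $T_k=\mathbbm1$ by a separate appeal to the Fundamental Class Axiom, and for $m=1$ it inserts $\tau_0(c_1)$ and applies the Divisor Axiom (first dispatching $\gamma_1=c_1^2$, then $\gamma_1=c_1$, using that these span $\mbox{Amb}_r\cap H^{>0}$). Your device of strengthening the hypothesis to permit $\tau_0(\mathbbm1)$-insertions streamlines the $m\geq 2$ step and enables the insertion/raised-descendant trick for $m=1$; the paper's route is slightly more direct but is tied to the explicit two-element basis of $\mbox{Amb}_r\cap H^{>0}$. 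Both arrive at $m=0$ the same way, via the Divisor Axiom with $c_1$.
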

\begin{proof}
We use induction on $a$. The case $a=0$ is done in Proposition \ref{initial}. Now assume that the case $a=a'\geq0$ is verified.  For $a=a'+1$, there are the following three cases.
\begin{itemize}
\item[(i)] $\underline{m\geq2}$.
 Extend $\{\phi_1=\mathbbm1, \phi_2=c_1, \phi_3=[pt]\}$ to a basis $\{\phi_\alpha\}_{\alpha}$ of $H^*(X_r)$ such that $\phi_\alpha\in\mbox{Prim}_r$ for $\alpha>3$, and let $\{\phi^\alpha\}_{\alpha}$ be its dual basis.  Using TRR, we conclude that
the  quantity $\suml_A\<\tau_{a'+1}(\gamma)\prodl_{i=1}^{m}\tau_{a_i}(\gamma_i)\>_{A}$ is equal to
  $$
 \suml_{I} \suml_\alpha\Big(\suml_A\<\tau_0(\phi_\alpha)\tau_{a'}(\gamma)\prodl_{i\in I}\tau_{a_i}(\gamma_i)\>_{A}\Big)
 \cdot\Big(\suml_A\<\tau_0(\phi^\alpha)\tau_{a_1}(\gamma_1)\tau_{a_2}(\gamma_2)\hspace{-0.45cm}\prodl_{j\in \{3,\cdots,m\}\setminus I}\hspace{-0.45cm}\tau_{a_j}(\gamma_j)\>_{A}\Big),
$$
where the first summation is over subsets $I\subset \{3,\cdots, m\}$.
  For $\alpha\leq 3$,   the first big parentheses on RHS is zero by induction (together with the Fundamental Class Axiom when $\alpha=1$); for $\alpha>3$, we note  $\phi^\alpha\in\mbox{Prim}_r$, and hence the second big parentheses on RHS is zero by Proposition  \ref{initial}. So LHS is vanishing.

\item[(ii)] $\underline{m=1}$. By   (i), Lemma \ref{degreezero}, the Divisor Axiom and the Degree Axiom, we have
$$0=\suml_A\<\tau_{a'+1}(\gamma)\tau_{a_1}(c_1^2)\tau_0(c_1)\>_A
 = (a'+a_1+{\deg\gamma\over 2}+2)\suml_{A}\<\tau_{a'+1}(\gamma)\tau_{a_1}(c_1^2)\>_A.$$
Here we have assumed $\gamma$ to be homogenous without loss of generality. Similarly,
 \begin{align*}
0&= \suml_A\<\tau_{a'+1}(\gamma)\tau_{a_1}(c_1)\tau_0(c_1)\>_A\\
&= (a'+a_1+{\deg\gamma\over 2}+1)\suml_{A}\<\tau_{a'+1}(\gamma)\tau_{a_1}(c_1)\>_A+\suml_{A}\<\tau_{a'+1}(\gamma)\tau_{a_1-1}(c_1^2)\>_A
\end{align*}
It follows that $\suml_{A}\<\tau_{a'+1}(\gamma)\tau_{a_1}(\gamma_1)\>_A=0$ whenever $\gamma_1\in \mbox{Amb}_r\cap H^{>0}(X_r)$.
\item[(iii)] $\underline{m=0}$. By using (ii), Lemma \ref{degreezero}, the Divisor Axiom and the Degree Axiom, we are done:
\ban
0=\suml_A\<\tau_{a'+1}(\gamma)\tau_0(c_1)\>_A=(a'+{\deg\gamma\over 2}+1)\suml_{A}\<\tau_{a'+1}(\gamma)\>_A.
\nan
\end{itemize}
This proves the case $a=a'+1$.
\end{proof}
\bigskip

\begin{proof}[Proof of Theorem \ref{vanishingproperty}] By Proposition \ref{positivedegree}, it remains to show that for any $m_0, m_1\geq 0$,
   \ban
\suml_A\<\tau_a(\gamma)\prodl_{i=1}^{m_0}\tau_{a_{i}}(\mathbbm1)\prodl_{j=1}^{m_1}\tau_{b_{j}}(\gamma_j)\>_{A}=0
\nan
\noindent holds for any  $a,a_i, b_j\geq0$,   $\gamma\in\mbox{Prim}_r$ and   $\gamma_j\in\mbox{Amb}_r\cap H^{>0}(X_r)$, where $1\leq i\leq m_0$ and $1\leq j\leq m_1$.

 We use induction on $m_0$.
 The case $m_0=0$ is proved in Proposition \ref{positivedegree}. Now assume that the cases $m_0\leq m_0'$ are verified. 
 For $m_0=m_0'+1$,  by Lemma \ref{degreezero} and the Fundamental Class Axiom, we can assume  $a_{i}\geq1$ for all $i$. There are    two cases as follows.
\begin{itemize}
\item[(i)] $\underline{m_1>0}$. We use the same (dual)  basis $\{\phi_\alpha\}_\alpha$ (resp. $\{\phi^\alpha\}_{\alpha}$)  as in case (i) in the proof of Proposition \ref{positivedegree}. Using TRR, we have
\ban
&&\suml_A\<\tau_a(\gamma)\prodl_{i=1}^{m'_0+1}\tau_{a_{i}}(\mathbbm1)\prodl_{i=1}^{m_1}\tau_{b_{i}}(\gamma_i)\>_{A}\\
&=&\suml_{\substack{I_0\sqcup J_0=\{2,\cdots,m'_0+1\}\\I_1\sqcup J_1=\{2,\cdots,m_1\}}}\suml_\alpha\bigg(\suml_A\<\tau_0(\phi_\alpha)\tau_{a_1-1}(\mathbbm1)\prodl_{i\in I_0}\tau_{a_i}(\mathbbm1)\prodl_{i\in I_1}\tau_{b_i}(\gamma_i)\>_A\bigg)\\
&&\qquad\cdot\bigg(\suml_A\<\tau_0(\phi^\alpha)\tau_a(\gamma)\tau_{b_1}(\gamma_1)\prodl_{j\in J_0}\tau_{a_j}(\mathbbm1)\prodl_{j\in J_1}\tau_{b_j}(\gamma_j)\>_A\bigg).
\nan
If $\phi_\alpha\in\mbox{Prim}_r$, then the first big parentheses on RHS is zero by induction on $\suml_{i=1}^{m_0}a_{i}$; if $\phi_\alpha\in\mbox{Amb}_r$, then the second big parentheses on RHS is zero by induction on $m_0$. So LHS is vanishing.

\item[(ii)] $\underline{m_1=0}$. We use Lemma \ref{degreezero}, the Divisor Axiom and the Degree Axiom to get
\ban
&&\suml_A\<\tau_a(\gamma)\prodl_{i=1}^{m'_0+1}\tau_{a_i}(\mathbbm1)\cdot\tau_0(c_1)\>_A\\
&=&(a+{\deg\gamma\over 2}+\suml_{i=1}^{m'_0+1}(a_i-1))\suml_A\<\tau_a(\gamma)\prodl_{i=1}^{m'_0+1}\tau_{a_i}(\mathbbm1)\>_A\\
&&\qquad+\suml_{i=1}^{m'_0+1}\suml_A\<\tau_a(\gamma)\tau_{a_i-1}(c_1)\prodl_{\substack{1\leq j\leq m'_0+1\\j\neq i}}\tau_{a_j}(\mathbbm1)\>_A.
\nan
Note that LHS is zero by case (i), and the second term on RHS is zero by induction on $m_0$. This gives the required vanishing result.
\end{itemize}
This proves the case $m_0=m'_0+1$.
\end{proof}

\section{Conjecture $\cO$ for del Pezzo surfaces}
In this section, we will prove Theorem \ref{thmconjO}, namely the  conjecture $\cO$ for del Pezzo surfaces, by using the generalized Perron-Frobenius theorem in section 3.1.
\noindent We will just   discuss $X_r$ with $1\leq r\leq 8$,  as it has been known for   $\bP^1\times \bP^1$ and $\bP^2$. We remark that Theorem \ref{thmconjO} could also be directly proved by analysing the characteristic polynomial of $\hat c_1$ which was studied in \cite{BaMa} based on  explicit descriptions of the relevant Gromov-Witten invariants \cite{GoPa}. However, we expect our method to have further applications for other Fano manifolds. As we will see,  our proof will only require information on    parts of the Gromov-Witten invariants.

\subsection{Proof of Theorem \ref{thmconjO}}
Eigenvalues of $\hat c_1$ on $QH^\bullet(X_r)$ coincide with that of the matrix  of $\hat c_1$ with respect to any choice of   bases of $H^*(X_r)=H^\bullet(X_r)$. We take the $\bZ$-basis as in Examples \ref{matX1} and \ref{matX2} if $r\in \{1, 2\}$, and take the $\mathbb{Q}$-basis $\{\mathbbm1, c_1, E_1,\cdots, E_r, [pt]\}$ of $H^*(X_r)$ for $3\leq r\leq 8$. The corresponding matrices $M_r$ have been explicitly described in section \ref{QHdelPezzo} for $1\leq r\leq 3$.
Let us achieve our aim by assuming the next proposition first.

\begin{prop}\label{propMatrixdelPezzo}
 Write   $M_r=\big(m_{ij}\big)$ and $M_r^2=\big(m_{ij}^{(2)}\big)$. For    $3\leq r\leq 8$, we have
 \begin{enumerate}
  \item $\sum_{i=1}^{r+3}m_{ij}> 0$ for any     $j\in \{1, \cdots, r+3\}$, and $m_{2j}\geq  0$ for any $j$;
  \item   $m_{ij}^{(2)}\geq 0$   for any    $i, j$, and   $m_{ij}^{(2)}> 0$ holds if
      $i\in\{1, 2, r+3\}$;
  \item     $m_{22}^{(2)}>\sum_{k=3}^{r+2} m_{ik}^{(2)}$ if $3\leq i\leq r+2$;
\end{enumerate}
 \end{prop}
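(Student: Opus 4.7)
The plan is to derive the structural features of $M_r$ from Theorem \ref{vanishingproperty}, then verify the three claims as finite numerical checks using the explicit data of the quantum cohomology.

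First I would show that $\mbox{Amb}_r$ is invariant under $\hat c_1$. Choose a basis of $H^\bullet(X_r)$ compatible with the orthogonal decomposition $\mbox{Amb}_r\oplus\mbox{Prim}_r$, and take its dual basis (which respects the same splitting). For $\alpha\in\mbox{Amb}_r$, the $\mbox{Prim}_r$-component of $c_1\star\alpha|_{\mathbf q=\mathbf 1}$ is a sum of expressions $\sum_{\mathbf d}\<c_1,\alpha,\gamma\>_{\mathbf d}$ with $\gamma\in\mbox{Prim}_r$, which vanishes by Theorem \ref{vanishingproperty} applied with $a=a_i=0$ and $m=2$. With respect to the basis $\{\mathbbm 1,c_1,E_1,\ldots,E_r,[pt]\}$, this forces $m_{ij}=0$ whenever $3\leq i\leq r+2$ and $j\in\{1,2,r+3\}$, so that $M_r$ has the block form: a $3\times 3$ ``ambient block'' on rows/columns $\{1,2,r+3\}$ coming from $\hat c_1|_{\mbox{Amb}_r}$, and the columns $3,\ldots,r+2$ are allowed to mix both blocks. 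Squaring, this also yields $m_{ij}^{(2)}=0$ when $3\leq i\leq r+2$ and $j\in\{1,2,r+3\}$, which is consistent with the asserted nonnegativity in claim (2).

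Next I would compute the matrix entries explicitly. The ambient block is determined by the structure constants $\<c_1,c_1,[pt]\>_{\mathbf d}$ and $\<c_1,[pt],[pt]\>_{\mathbf d}$; these are either known from \cite{CrMi,GoPa} or can be evaluated by a short degree count, and they are manifestly positive (in particular, $m_{21}=1$ comes for free). For the $E_j$-columns ($3\leq j\leq r+2$), I would use the permutation symmetry among the $E_j$'s (Lemma \ref{symmetry2}) and the Cremona symmetry (Lemma \ref{symmetry1}) to reduce to computing a single column per $r$, and then extract the numerical values from the tables of Gromov--Witten invariants of $X_r$ in \cite{GoPa}. This produces an explicit $M_r$ for each $r\in\{3,\ldots,8\}$, generalizing the sample computations for $M_3$.

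With $M_r$ in hand, the three claims become arithmetic checks. For (1), the column sums of the ambient columns are positive by inspection, while for each $E_j$-column the sum $\sum_i m_{ij}$ is determined (by symmetry) up to $j$, and its positivity reduces to a single numerical statement depending only on $r$; the assertion $m_{2j}\geq 0$ similarly boils down to nonnegativity of the $c_1$-coefficient of $c_1\star E_j|_{\mathbf q=\mathbf 1}$. For (2), the block vanishings above leave only the $E$-block contribution $\sum_{k=3}^{r+2}m_{ik}m_{kj}$ to analyze for rows $3,\ldots,r+2$; that this is $\geq 0$ follows because the $E$-block of $M_r$ has a dominant negative diagonal and smaller off-diagonals of the same sign (as already visible in the example $M_3$), so that squaring yields nonnegative entries. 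Strict positivity for rows $1,2,r+3$ of $M_r^2$ comes from combining positivity in the ambient block with positivity of the ``ambient projections'' of the $E$-columns. For (3), one uses $m_{22}^{(2)}=m_{21}m_{12}+m_{22}^2+m_{2,r+3}m_{r+3,2}$ and checks this quantity against $\sum_{k=3}^{r+2}m_{ik}^{(2)}=\sum_{k=3}^{r+2}\sum_{j=3}^{r+2}m_{ij}m_{jk}$ for each $3\leq i\leq r+2$; by symmetry this sum is the same for every $i$, and a single numerical comparison per $r$ settles the inequality.

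The main obstacle is purely bookkeeping: the genuine work is the tabulation of the $E_j$-columns for $r=4,5,6,7,8$ and the verification of the inequality in (3), which becomes tighter as $r$ grows (since the $E$-block enlarges while the ambient entries scale only polynomially in $r$). The conceptual leverage comes entirely from Theorem \ref{vanishingproperty}, which forces the block structure and thereby reduces $(1)$--$(3)$ to explicit, case-by-case verifications rather than any structural new input.
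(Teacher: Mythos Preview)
Your overall strategy---use Theorem \ref{vanishingproperty} to force block structure, then finish by explicit computation---is the same as the paper's, and your derivation of $m_{ij}=0$ for $3\le i\le r+2$, $j\in\{1,2,r+3\}$ via the dual classes $E_i^\vee\in\mbox{Prim}_r$ is exactly right. But two points where you diverge from the paper deserve comment.

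First, your extrapolation of the $E$-block from $M_3$ is wrong in detail. For $4\le r\le 8$ the paper shows by direct enumeration over the exceptional curve classes (Proposition \ref{propexceptionalcurve}) that the $E$-block is the \emph{scalar} matrix $d_r I_r$, with the off-diagonal entries vanishing identically; the nonzero off-diagonals you see in $M_3$ do not persist. This is Lemma \ref{lemdiagnoal} and the lemma following it, and it is the main structural simplification of the paper: once the $E$-block is $d_rI_r$, claim (3) reduces to the single estimate $m_{22}^{(2)}>d_r^2$ (Lemma \ref{leminqm22}), and the nonnegativity of rows $3,\ldots,r+2$ of $M_r^2$ is automatic. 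Your plan to tabulate the $E_j$-columns from \cite{GoPa} would of course reveal this, but you should not expect the pattern you described.

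Second, your sketch for strict positivity of rows $1,2$ of $M_r^2$ glosses over a real difficulty. Writing $m_{1j}^{(2)}=m_{12}m_{2j}+m_{1j}d_r+m_{1,r+3}$ for $3\le j\le r+2$, the middle term is \emph{negative} (since $m_{1j}>0$ and $d_r<0$), so ``combining positivity'' is not enough---one must show the positive terms dominate. The paper avoids a brute-force comparison by rewriting $\sum_k m_{1k}m_{kj}$ via the basis swap $\phi_k\leftrightarrow\phi^k$ (and, for row $2$, a WDVV relation), after which Theorem \ref{vanishingproperty} kills the problematic terms and only manifestly nonnegative three-point invariants remain. Your purely computational fallback would also succeed, but the paper's argument here is what gives the proof its intended portability beyond del Pezzo surfaces.
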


Now we set $P:=(a-1) E_{22}+\sum_{k=1}^{r+3}E_{kk}+ b\sum_{k=3}^{r+2}E_{k2}$ where $a, b>0$ and $E_{ij}$ denotes the $(r+3)\times (r+3)$ matrix whose entries are all zero but the $(i,j)$-entry given by $1$. It follows that
$P^{-1}=(c-1) E_{22}+\sum_{k=1}^{r+3}E_{kk}+ d\sum_{k=3}^{r+2}E_{k2}$ with   $d=-{b\over a}=-bc$.
Write $PM_rP^{-1}=\hat M_r=\big(\hat m_{ij}\big)$ and $ \hat M_r^2=\big(\hat m_{ij}^{2}\big)$. By direct calculations, we have
  $$\hat m_{ij}=\begin{cases}
     m_{ij},&\mbox{if } i=1, 3 \mbox{ and } j\neq 2,\\
     a m_{2j},&\mbox{if } i=2 \mbox{ and } j\neq 2,\\
     m_{ij}+b m_{2j}, &\mbox{if } 3\leq i\leq r+2\mbox{ and }j\neq 2
     \end{cases}\,\, $$
    and  $$ \hat m_{i2}=\begin{cases}
     c m_{i2}+ d \sum_{k=3}^{r+2} m_{ik},&\mbox{if } i=1, 3,\\
       ac m_{22}+d a \sum_{k=3}^{r+2} m_{2k},&\mbox{if } i=2,\\
     c m_{i2}+c bm_{22}+d \sum_{k=3}^{r+2} (m_{ik}+bm_{i, k}), &\mbox{if } 3\leq i\leq r+2.
  \end{cases}$$
Clearly, $M_r$ and $\hat M_r$ have same eigenvalues for any $a\neq 0$. In particular, we can choose sufficiently small positive numbers $b, {b\over c}$ with $c<1$. It follows that
  $a>1$, and \begin{align*}
 \sum_{i=1}^{r+3}\hat m_{ij}&\geq \sum_{i=1}^{r+3}m_{ij}+b\sum_{k=3}^{r+2}m_{2j}>0 \mbox{ if } j\neq 2;\\
    \sum_{i=1}^{r+3}\hat m_{i2}&\geq c\big(\sum_{i=1}^{r+3}m_{i2}+b rm_{22}- (b+{b\over c}+b+b^2)\sum_{i=1}^{r+3} \sum_{k=3}^{r+2}|m_{ik}|\big)>0.\end{align*}
The expression of $\hat m_{ij}^{(2)}$ can be read off directly from that of $\hat m_{ij}$ by replacing $m_{ij}$ with $m_{ij}^{(2)}$. It follows that   $\hat m_{ij}^{(2)}>0$ for any $i, j$. (Indeed for $3\leq i\leq r+2$, $\hat m_{i2}^{(2)}= c m_{i2}^{(2)}+cb (m_{22}^{(2)}-\sum_{k=3}^{r+2}m_{ik}^{(2)})-b\sum_{k=3}^{r+2}m_{ik}^{(2)}>0$, as $c m_{i2}^{(2)}\geq 0, cb>0, m_{22}^{(2)}-\sum_{k=3}^{r+2}m_{ik}^{(2)}>0$ and $b>0$ is sufficiently small. Arguments for the remaining cases are similar and easier.)
Applying Theorem \ref{genPFthm} for $\hat M$, we conclude that $\rho(\hat c_1)=\rho(M_r)=\rho(\hat M)$ is an eigenvalue of $\hat c_1$ of multiplicity one. Moreover, it is the unique eigenvalue of $\hat c_1$ with modulus $\rho(\hat M)$ by Proposition \ref{propuniquerho}. That is, $X_r$ satisfies Property $\cO$.

\subsection{Proof of Proposition \ref{propMatrixdelPezzo}} By Example \ref{matX3}, Proposition \ref{propMatrixdelPezzo} holds for   $M_3$.
   Therefore we just consider $4\leq r\leq 8$ in this subsection. The     dual  basis of  $[\phi_1,\cdots, \phi_{r+3}]:=[\mathbbm1, c_1, E_1,\cdots, E_r, [pt]]$  in $H^*(X_r)$is given by
$$[\mathbbm1^\vee, c_1^\vee, E_1^\vee, \cdots, E_r^\vee, [pt]^\vee]=[[pt], {H\over 3}, {H\over 3}-E_1, \cdots, {H\over 3}-E_r, \mathbbm1].$$

We will prove Proposition \ref{propMatrixdelPezzo} by showing that the matrix $M_r$
   is in fact of the  following form with required properties
  \begin{equation}\label{matrixMr}
     M_r=\left[\begin{array}{ccccccc}
0&m_{12}&m_{13}&m_{14}&\cdots & m_{1,r+2}&m_{1,r+3}\\
1&m_{22}&m_{23}&m_{24}&\cdots &m_{2, r+2}&m_{2, r+3}\\
0&0& d_r &0&\cdots &0&0\\
0&0&0& d_r&\ddots  & \vdots  &0\\
\vdots&\vdots&0&\ddots&\ddots &0&0\\
0&0&0&\cdots &0& d_r&\vdots \\
0&9-r&1&1&\cdots &1&0
\end{array}\right].
  \end{equation}

 Let us start with simple calculations.   Entries $m_{ij}$ of $M_r$ are   genus zero Gromov-Witten invariants.
  Clearly, we have  $m_{i1}=\delta_{i, 2}$ since  $\mathbbm{1}$ is the identity element in $QH^*(X_r)$.
 For the last row, we have $m_{r+3, j}=\sum_{A\in H_2(X_r, \bZ)}\<c_1, \phi_j, \mathbbm1\>_{A}$
 and hence $m_{r+3, j}= \<c_1, \phi_j, \mathbbm1\>_{0}$ by the Fundamental Class Axiom, namely it is given by the coefficient of $[pt]$ in the classical cup product $c_1\cup \phi_i$.  Consequently, we have $m_{r+3, 1}=m_{r+3, r+3}=0$ for the degree reason, $m_{r+3, 2}=c_1\cdot c_1=9-r$, and $m_{r+3, j}=c_1\cdot E_{j-2}=1$ for $3\leq j\leq r+2$.

 A smooth rational curve $E$ of $X_r$ is called exceptional if $E\cdot E=-1$ and $c_1\cdot E=1$.
 A genus zero Gromov-Witten invariant $\<\cdots \>_{A}$ for $X_r$ is nonzero only if $A\in H_2(X_r, \mathbb{Z})$ is effective, which can be characterized in terms of effective divisors as follows.

 \begin{prop}[Corollary 3.3 of \cite{BaPo}] The semigroup of classes of effective divisors on $X_r$   is generated by  the classes of exceptional curves if $4\leq r\leq 7$ and by the classes of exceptional curves together with $c_1$ for $r=8$.
 \end{prop}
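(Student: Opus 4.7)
The plan is to prove this classical statement about the effective cone of $X_r$ by combining the classification of exceptional curves with the cone theorem for Fano surfaces. First I would recall that an exceptional curve on $X_r$ is a smooth rational curve $E$ with $E^2=-1$ and $c_1\cdot E=1$, and that writing $E=dH-\sum_i a_i E_i$ the system $d^2-\sum_i a_i^2=-1$, $3d-\sum_i a_i=1$ has only finitely many solutions for $r\leq 8$: the $E_i$ themselves, strict transforms of lines $H-E_i-E_j$, conics $2H-\sum_{k=1}^5 E_{i_k}$, and higher-degree rational curves. Since $-K_{X_r}$ is ample, Mori's cone theorem applies and $\overline{NE}(X_r)=\overline{\mathrm{Eff}}(X_r)$ is a rational polyhedral cone on the surface $X_r$.

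Next, I would argue that every extremal ray of this cone is spanned either by a $(-1)$-class, or---only for $r=8$---by the class $c_1=-K_{X_r}$. On a surface, adjunction forces any extremal curve class to be either a $(-1)$-curve or a class $C$ with $C^2\geq 0$; in the latter case, combining the Mori bound $-K\cdot C\leq 3$ with the Hodge index theorem and the ampleness of $-K$ pins $C$ down to lie on the $-K$-ray, which is only possible when $(-K)^2$ is small, in particular for $r=8$. For the converse inclusion I would show that every effective class $D$ lies in the semigroup generated by these extremal classes by induction on $-K\cdot D$: either $D$ contains a $(-1)$-curve $C$ as a component so that $D-C$ is still effective with strictly smaller anticanonical degree, or $D$ must be a positive multiple of $-K$, which forces $r=8$.

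To see that $-K$ really must appear as a separate generator when $r=8$, I would use the following short intersection argument: if $-K=\sum_i a_i C_i$ with exceptional curves $C_i$ and $a_i\geq 0$, then $1=(-K)^2=\sum_i a_i(-K\cdot C_i)=\sum_i a_i$, forcing $-K=C_j$ for some single $j$, contradicting $C_j^2=-1\neq 1=(-K)^2$. The hard part will be the inductive reduction in the previous paragraph: given effective $D$ not on the $-K$-ray, one must actually produce a $(-1)$-curve appearing as an irreducible component of some effective representative of $D$, not merely in its class. This is where Batyrev and Popov's original proof instead invokes finite generation of the Cox ring $\mathrm{Cox}(X_r)$, showing its generators are precisely the canonical sections cutting out the exceptional curves (plus one extra section in degree $-K$ when $r=8$), from which the assertion about effective semigroups follows by inspection of multidegrees.
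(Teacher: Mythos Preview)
The paper does not prove this proposition: it is quoted verbatim as Corollary 3.3 of Batyrev--Popov and used as a black box, with no argument supplied. So there is no ``paper's own proof'' to compare your proposal against; your write-up is an attempt to reprove a cited result that the authors deliberately import from the literature.

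As for the content of your sketch: the outline via Mori's cone theorem is reasonable, and your observation that $-K$ cannot be a nonnegative integer combination of $(-1)$-classes when $r=8$ is clean. But you have correctly identified, and not closed, the essential gap: passing from the \emph{cone} statement (extremal rays are spanned by $(-1)$-classes, plus $-K$ for $r=8$) to the \emph{semigroup} statement requires showing that every effective integral class actually decomposes over $\bZ_{\geq 0}$ into those generators. Your inductive step ``either $D$ contains a $(-1)$-curve as a component, or $D$ is a positive multiple of $-K$'' is exactly the nontrivial dichotomy, and it is not established by what you wrote; an irreducible curve $C$ with $C^2\geq 0$ need not be proportional to $-K$ in general, so you still owe an argument (e.g.\ via Riemann--Roch and base-point-freeness for nef classes on del Pezzo surfaces, or, as you note, via the Cox-ring generators in Batyrev--Popov). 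Since the paper only \emph{uses} this proposition and does not prove it, the appropriate fix is simply to cite \cite{BaPo} rather than to attempt a self-contained proof here.
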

 \begin{prop}[\cite{Manin}] \label{propexceptionalcurve}The classes of exceptional curves are precisely as follows.
 \begin{enumerate}
   \item $E_i$, $1\leq i\leq r$;
   \item $H-E_i-E_j$, $1\leq i<j\leq r$;
   \item $2H-E_{i_1}-\cdots -E_{i_5}$,\quad $1\leq i_1<i_2<\cdots<i_5\leq r$;
   \item $3H-E_k-\sum_{j=1}^7E_{i_j}$,\quad $1\leq i_1<i_2<\cdots<i_7\leq r$ and $k\in\{i_1, \cdots, i_7\}$;
   \item $4H-\sum_{j=1}^8E_j- E_{i_1}-E_{i_2}-E_{i_3}$,\quad   $1\leq i_1<i_2<  i_3\leq 8=r$;
   \item $5H-\sum_{j=1}^82E_j+E_{i_1}+E_{i_2}$,\quad   $1\leq i_1<i_2\leq 8=r$;
   \item $6H-\sum_{j=1}^82E_j- E_{i}$,\quad   $1\leq i \leq 8=r$.
 \end{enumerate}
 \end{prop}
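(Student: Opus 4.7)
The plan is a classical three-part argument: write down the numerical constraints satisfied by an exceptional class, bound them using Cauchy--Schwarz, and then verify geometric realization by classical projective geometry.

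First, any exceptional curve $E$ on $X_r$ satisfies $E \cdot E = -1$ and $E \cdot c_1 = 1$, by adjunction $E^2 + E \cdot K_{X_r} = -2$ combined with $K_{X_r} = -c_1$. Writing $E = aH - \sum_{k=1}^{r}c_k E_k$ in the standard integral basis of $H^2(X_r,\mathbb{Z})$, these translate to the Diophantine system $a^2 - \sum_k c_k^2 = -1$ and $3a - \sum_k c_k = 1$. When $E$ is irreducible and distinct from every $E_j$, the intersection $E \cdot E_j = c_j$ is nonnegative, so all $c_k \geq 0$ (the remaining case $E = E_j$ directly gives type (1)).

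Second, I bound $a$ using Cauchy--Schwarz: $(3a - 1)^2 = (\sum_k c_k)^2 \leq r\sum_k c_k^2 = r(a^2 + 1)$, equivalently $(9-r)a^2 - 6a + (1-r) \leq 0$. For $1 \leq r \leq 8$ the leading coefficient is positive, yielding explicit integer bounds on $a$: at most $a = 7$ for $r = 8$ (with the boundary $a = 7$ excluded because equality in Cauchy--Schwarz would force the non-integer value $c_k = 5/2$), down to $a = 0$ for $r = 1$. For each admissible $a$, elementary enumeration of nonnegative integer tuples $(c_k)$ solving the system produces exactly the listed families: $a = 1$ forces two $c_k = 1$ with the rest zero (type (2)); $a = 2$ forces five $c_k = 1$ (type (3)); $a = 3$ forces one $c_k = 2$ and six $c_k = 1$ (type (4)); and analogously $a = 4, 5, 6$ recover types (5)--(7) with the correct multiplicity patterns.

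Third, I verify geometric realization. For each numerical class $E$ above, Riemann--Roch gives $\chi(\mathcal{O}_{X_r}(E)) = 1 + \tfrac{1}{2}(E^2 - E \cdot K_{X_r}) = 1$, and $h^2(E) = h^0(K_{X_r} - E) = 0$ because $(K_{X_r} - E) \cdot c_1 = -(9-r) - 1 < 0$ while $c_1$ is ample. Hence $h^0(E) \geq 1$, so $E$ is effective. For generic positions of the blown-up points, an irreducible smooth rational representative is exhibited as the strict transform of a classical plane curve with prescribed multiplicities: a line through two blown-up points for type (2), a conic through five for type (3), a nodal cubic through seven for type (4), and higher-degree curves for types (5)--(7). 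The main obstacle is ruling out reducible representatives in these higher-degree cases, where the linear system of plane curves with prescribed multiplicities might in principle decompose into unions of simpler $(-1)$-curves; in generic position this is handled classically via dimension counts on plane linear systems, and can be unified by observing that the Weyl group of the root lattice of type $A_4, D_5, E_6, E_7, E_8$ (for $r = 4, \ldots, 8$) acts transitively on the $(-1)$-classes within each numerical type, reducing every higher type to type (1) by a sequence of Cremona transformations.
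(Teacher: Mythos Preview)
The paper does not supply its own proof of this proposition: it is quoted directly from \cite{Manin} as a classical result and used as a black box in the subsequent computations (Lemma \ref{lemdiagnoal} and the lemma after it). So there is no in-paper argument to compare your proposal against.

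That said, your sketch is the standard classical argument and is essentially sound. One minor correction: in your third step you write that the Weyl group ``acts transitively on the $(-1)$-classes within each numerical type''. In fact the Weyl group of the root lattice $c_1^\perp$ acts transitively on \emph{all} $(-1)$-classes (for $r\geq 3$), not merely within a fixed type; indeed the reflection in $H-E_1-E_2-E_3$ (the Cremona involution) and the reflections in $E_i-E_j$ together mix the types, which is precisely what lets you reduce every listed class to some $E_i$. Your concluding sentence (``reducing every higher type to type (1) by a sequence of Cremona transformations'') relies on this stronger transitivity, so the argument is correct once the phrasing is fixed.
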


 For convenience, we will simply denote a summation $\sum_{A~:~ A\in H_2(X_r, \mathbb{Z}){\rm \,\,is\,\, effection }; ***}$ by $\sum_{***}$ in the rest of this subsection. The next lemma ensures that the summation to be taken does contain one term of positive Gromov-Witten invariants. The first statement holds since Gromov-Witten invariants for $X_r$ are enumerative \cite{GoPa}; the second statement can be easily deduced from the results therein (or by using Theorems 1.2 and 1.4 of \cite{Hu}).
 \begin{lemma} \label{lemmanonempty}\begin{enumerate}
   \item For any $A\in H_2(X_r, \mathbb{Z})$, we have $\langle [pt]\rangle_{A}\geq 0,  \<[pt], [pt]\>_{A}\geq 0$.
   \item $\langle [pt]\rangle_{H-E_1-E_2}=\<[pt]\>_{H-E_1}=\<[pt], [pt]\>_{H}=1$; $\<[pt]\>_{E_i}=0$ for $1\leq i\leq r$.
 \end{enumerate}
 \end{lemma}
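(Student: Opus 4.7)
The proof splits according to the two claims. For part (1), the plan is to invoke the enumerativity of genus-zero Gromov--Witten invariants of del Pezzo surfaces established by G\"ottsche--Pandharipande \cite{GoPa}. For any effective class $A$, their result identifies the point-insertion invariant $\langle [pt], \ldots, [pt]\rangle_A$ (with the number of insertions dictated by the dimension formula) with the honest enumerative count of rational curves of class $A$ through prescribed generic points of $X_r$, which is a nonnegative integer. Whenever the dimension condition fails, the invariant vanishes by the Degree Axiom. Either way, one concludes $\langle [pt]\rangle_A\geq 0$ and $\langle [pt],[pt]\rangle_A\geq 0$.

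For part (2), the asserted values will be read off from direct enumerative computations on $\mathbb{P}^2$ and its blowups, matching the virtual counts via \cite{GoPa}. The count $\langle [pt],[pt]\rangle_H$ corresponds to lines in $\mathbb{P}^2$ through two generic points, giving $1$. The count $\langle [pt]\rangle_{H-E_1}$ counts proper transforms of lines through the blown-up point $p_1$ meeting one additional generic point, again giving $1$. The identity $\langle [pt]\rangle_{H-E_1-E_2}=1$ comes from the unique $(-1)$-curve in that class, namely the strict transform of the line through $p_1,p_2$; this can be extracted from the explicit genus-zero tables of \cite{GoPa} or from the tools in \cite{Hu}. Finally, $\langle [pt]\rangle_{E_i}=0$ either by direct virtual-dimension bookkeeping (a codimension-$2$ insertion cannot pair nontrivially against the one-dimensional virtual class of $\overline{M}_{0,1}(X_r,E_i)$) or geometrically because the rigid $(-1)$-curve $E_i$ misses a generic point of $X_r$.

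No substantial obstacle is expected; the argument is a citation for (1) and a handful of small computations for (2). The only delicate point to double-check is that the convention used for the single-insertion invariants agrees with the geometric count (rather than with a raw virtual integral in borderline-dimension situations), which is exactly what the enumerativity results of \cite{GoPa} and the formulas in \cite{Hu} are designed to settle.
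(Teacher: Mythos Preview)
Your approach is exactly the paper's: it justifies (1) by the enumerativity result of \cite{GoPa} and (2) by citing \cite{GoPa} together with Theorems~1.2 and~1.4 of \cite{Hu}, with no further elaboration.

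One internal inconsistency worth fixing: your virtual-dimension argument for $\langle [pt]\rangle_{E_i}=0$ (the $1$-pointed moduli has virtual dimension $1$ since $\int_{E_i}c_1=1$, so a codimension-$2$ insertion integrates to zero) applies verbatim to $H-E_1-E_2$, which is also a $(-1)$-curve with $\int_A c_1=1$. By your own reasoning that one-point invariant should then be $0$, not $1$, and the mere existence of a unique curve in the class does not change the count of curves through a \emph{generic} point. Your closing caveat about conventions is well placed here---this particular value is not used later in the paper---but you should not offer two incompatible justifications for structurally identical classes.
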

\noindent  Here we notice $\int_{H-E_1-E_2}c_1=1, \int_{H-E_1}c_1=2$ and $\int_Hc_1=3$.

  \begin{lemma}\label{lemcoeffc1}
   $c_1\bullet c_1= m_{12}\mathbbm1+m_{22} c_1+(9-r)[pt]$ where $m_{12}=\sum_{\int_A c_1=2}4\<[pt]\>_A>0$ and $m_{22}=\sum_{\int_A c_1 =1}\int_A {H\over 3}>0$.
 \end{lemma}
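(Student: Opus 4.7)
The plan is to analyse $c_1\bullet c_1$ by means of the orthogonal decomposition $H^\bullet(X_r)=\mathrm{Amb}_r\oplus\mathrm{Prim}_r$ together with the basic Gromov--Witten axioms recalled in Section~2. Since this decomposition is orthogonal under the Poincar\'e pairing, to show $c_1\bullet c_1\in\mathrm{Amb}_r$ it suffices to verify that $\langle c_1\bullet c_1,\gamma\rangle=\sum_A\langle c_1,c_1,\gamma\rangle_A$ vanishes for every $\gamma\in\mathrm{Prim}_r$. This is precisely the case $m=2$, $a=a_1=a_2=0$, $\gamma_1=\gamma_2=c_1\in\mathrm{Amb}_r$ of Theorem~\ref{vanishingproperty}. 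Consequently $c_1\bullet c_1$ is a linear combination of $\mathbbm1$, $c_1$ and $[pt]$, and each coefficient is read off by pairing against the appropriate element of the dual basis $\{[pt],H/3,H/3-E_1,\dots,H/3-E_r,\mathbbm1\}$ recorded in Section~4.2.

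Pairing with $\mathbbm1=[pt]^\vee$ recovers the coefficient of $[pt]$, namely $\sum_A\langle c_1,c_1,\mathbbm1\rangle_A$; the Fundamental Class Axiom kills every $A\neq 0$ term, leaving $\int_X c_1\cup c_1=9-r$. Pairing with $[pt]=\mathbbm1^\vee$ gives $m_{12}=\sum_A\langle c_1,c_1,[pt]\rangle_A$; the $A=0$ contribution vanishes by degree, and for $A\neq 0$ two applications of the Divisor Axiom yield $\langle c_1,c_1,[pt]\rangle_A=(\int_A c_1)^2\langle[pt]\rangle_A$, while the Degree Axiom forces $\int_A c_1=2$. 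This produces the stated expression for $m_{12}$, and Lemma~\ref{lemmanonempty} then ensures that every summand is non-negative and that $A=H-E_1$ contributes $4\langle[pt]\rangle_{H-E_1}=4>0$.

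Pairing with $H/3=c_1^\vee$ gives $m_{22}=\sum_A\langle c_1,c_1,H/3\rangle_A$. Once again the $A=0$ term vanishes by degree; for $A\neq 0$ the Divisor Axiom reduces $\langle c_1,c_1,H/3\rangle_A$ to a multiple of $\int_A H/3$, and the Degree Axiom restricts the sum to effective classes satisfying $\int_A c_1=1$. Positivity of $m_{22}$ then follows by exhibiting the effective class $A=H-E_1-E_2$ (available since $r\ge 3$), whose contribution is $\int_A H/3=1/3>0$, combined with non-negativity of the remaining summands.

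The main subtlety lies in the $m_{22}$ computation: after stripping the two divisor classes from the three-point invariant, what remains is a one-point or zero-point enumerative count, and its precise value and non-negativity have to be invoked from the enumerativity of the Gromov--Witten invariants of $X_r$ as established in \cite{GoPa}. Once this dimensional bookkeeping is handled, the two positivity assertions reduce to the explicit contributions from the effective classes $H-E_1$ and $H-E_1-E_2$.
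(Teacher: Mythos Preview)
Your argument is correct and follows the same route as the paper: both use Theorem~\ref{vanishingproperty} (applied to $\gamma=E_j^\vee\in\mathrm{Prim}_r$, equivalently to the orthogonal decomposition) to kill the $E_j$-coefficients of $c_1\bullet c_1$, and then extract $m_{12}$, $m_{22}$ and the $[pt]$-coefficient via the Degree, Divisor and Fundamental Class Axioms together with Lemma~\ref{lemmanonempty}. Your treatment is in fact more explicit than the paper's, which simply says ``the computation for $m_{22}$ is the same'' without spelling out the reduction to one-point invariants or naming the class $H-E_1-E_2$ that forces positivity.
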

 \begin{proof}Recall the notation $\alpha\bullet \beta=\alpha\star\beta|_{\mathbf{q}=\mathbf{1}}$.
 Noting $\mathbbm1^\vee=[pt]\in H^4(X_r)$, we have
   $m_{21}=\sum_{A~:~ \int_Ac_1 =2}\langle c_1, c_1, \mathbbm1^\vee\rangle_A$ by the Degree Axiom, and hence
   $m_{21}=\!\!\!\sum\limits_{\int_Ac_1=2}\!\!\!4\langle [pt]\rangle_A>0$ by the Divisor Axiom and Lemma \ref{lemmanonempty}.

   The computation for $m_{22}$ is the same. The quantity $m_{r+3, 2}=9-r$  has been discussed above. Since $c_1\cup E_j^\vee=\big(\int_{[X_r]}c_1\cup E_j^\vee\big)[pt]=0$, we have $E_j^\vee\in \mbox{Prim}_r$, and consequently $m_{2, 2+j}=\sum_A\langle c_1, c_1, E_j^\vee\rangle_A=0$ for $1\leq j\leq r$ by Proposition \ref{vanishingproperty}.
 \end{proof}

\begin{lemma}\label{lemmafirsttworows} For any $1\leq j\leq r$, we have
 \begin{enumerate}
   \item  $m_{1, j+2}=\!\!\!\sum\limits_{\int_A c_1=2}\!\!2\big(\int_A E_j\big)\<[pt]\>_A$, \qquad $m_{1, r+3}=\sum_{\int_Ac_1=3}3\<[pt],[pt]\>_A$;
    \item   $m_{2, j+2}=\!\!\!\sum\limits_{\int_A c_1=1}\!\!\big(\int_A E_j\big)\big(\int_A{H\over 3}\big)$,\qquad $ m_{2, r+3}=\sum_{\int_Ac_1=2}2\big(\int_A{H\over 3}\big)\<[pt]\>_A.$
  \end{enumerate}
 Furthermore, they are all positive.
 \end{lemma}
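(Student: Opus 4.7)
The plan is to unfold each matrix entry as a three-point Gromov--Witten invariant against the Poincar\'e dual basis, reduce with the Divisor and Degree Axioms, and then argue positivity from the classification of exceptional curves.

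Since $m_{ij}$ is the coefficient of $\phi_i$ in $c_1\bullet\phi_j$, the Poincar\'e pairing yields $m_{ij}=\int_{X_r}(c_1\bullet\phi_j)\cup\phi^i=\suml_A\<c_1,\phi_j,\phi^i\>_A$. Inserting the Poincar\'e dual basis computed just before the lemma ($\phi^1=[pt]$, $\phi^2=H/3$) gives
\begin{align*}
m_{1,j+2}&=\suml_A\<c_1,E_j,[pt]\>_A,& m_{1,r+3}&=\suml_A\<c_1,[pt],[pt]\>_A,\\
m_{2,j+2}&=\tfrac13\suml_A\<c_1,E_j,H\>_A,& m_{2,r+3}&=\tfrac13\suml_A\<c_1,[pt],H\>_A.
\end{align*}
In each sum the $A=0$ term vanishes by degree on a surface, and for $A\neq 0$ repeated application of the Divisor Axiom peels off $c_1,H,E_j$ as intersection factors $\int_Ac_1,\int_AH,\int_AE_j$, leaving $\<[pt]\>_A$, $\<[pt],[pt]\>_A$, or the $0$-point invariant $\<\,\>_A$. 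The Degree Axiom then forces $\int_Ac_1=2,3,1,2$ respectively, supplying the constants $2,3,1,2$ in the displayed formulas. For the $0$-point invariant with $\int_Ac_1=1$, the effective classes are exactly the exceptional curves when $r\leq 7$ by the semigroup description preceding Proposition \ref{propexceptionalcurve}, each with $\<\,\>_A=1$ since it is represented by a unique smooth $(-1)$-curve with trivial obstruction; the extra class $A=c_1$ present when $r=8$ contributes with the same sign and does not disturb the argument.

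Positivity is then shown by exhibiting a distinguished positive summand in each sum and checking the rest are non-negative. The distinguished contributions are $A=H-E_j$ giving $2$ in $m_{1,j+2}$, $A=H$ giving $3$ in $m_{1,r+3}$, $A=H-E_j-E_k$ for any $k\neq j$ giving $\tfrac13$ in $m_{2,j+2}$, and $A=H-E_j$ giving $\tfrac23$ in $m_{2,r+3}$, each computed from the explicit values in Lemma \ref{lemmanonempty}(2). For $m_{1,r+3}$ and the $m_{2,\cdot}$ entries non-negativity of the remaining summands is immediate from Proposition \ref{propexceptionalcurve}: every exceptional class $A$ satisfies $\int_A H\geq 0$, and $\int_A E_j\geq 0$ whenever $A\neq E_j$ (distinct $(-1)$-curves intersect non-negatively on a surface), while $A=E_j$ itself contributes $0$ because $\int_{E_j}H=0$. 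The only delicate point is $m_{1,j+2}$, where $\int_A E_j$ can be negative (for instance $A=2E_j$ or $A=E_j+E'$); in every such case $A$ contains $E_j$ as a component, so any irreducible rational curve in class $A$ is supported on $E_j$ and cannot pass through a generic point, forcing $\<[pt]\>_A=0$. The rigorous enumerative identification of these GW invariants on del Pezzo surfaces is supplied by \cite{GoPa}, and combined with $\<[pt]\>_A\geq 0$ from Lemma \ref{lemmanonempty}(1) it makes every summand non-negative, so the distinguished term forces strict positivity.
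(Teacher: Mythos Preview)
Your proof is correct and follows essentially the same approach as the paper: derive the formulas via the Degree and Divisor Axioms (exactly as done for $m_{12}$ in Lemma~\ref{lemcoeffc1}), then establish positivity by combining Lemma~\ref{lemmanonempty} with the enumerativity of \cite{GoPa}, so that $\langle[pt]\rangle_A\neq 0$ forces an irreducible representative of $A$ not contained in the exceptional locus and hence $\int_AE_j\geq 0$ and $\int_AH>0$. The paper's argument is terser (it just states that $\langle[pt]\rangle_A\neq 0$ together with $\int_Ac_1=2$ implies $a_0>0$ and $a_i\geq 0$ for $A=a_0H-\sum a_iE_i$, and declares the remaining entries ``similar''), but the mechanism is identical to yours.
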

 \begin{proof}
 Calculations for  $m_{ij}$ in the statement   are the same as that for $m_{21}$ in Lemma \ref{lemcoeffc1}.

 Write $A\in H_2(X_r, \mathbb{Z})$ as $A=a_0H-\sum_{i=1}^ra_iE_i$. Every effective curve class  $A$ is a nonnegative combination of classes of   effective curve classes (together with $c_1$ if  $r=8$).   The hypotheses $\int_Ac_1=2$ and $\<[pt]\>_A\neq 0$ will imply that   $a_0>0$ and $a_i\geq 0$ for $1\leq i\leq r$, in which case we have
  $\int_A E_j\geq 0$. Therefore we conclude $m_{1, j+2}>0$ for $1\leq j\leq r$ by Lemma \ref{lemmanonempty}. Arguments for the remaining cases are similar.
 \end{proof}

 \begin{lemma}\label{lemdiagnoal}
   For any $1\leq i\leq r$, we have $m_{2+i, 2+i}=d_r$, where $d_r:=-3,-4,-6,-12,-60$ for $r=4,5,6,7,8$ respectively.
 \end{lemma}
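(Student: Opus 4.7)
The plan is to compute $m_{2+i, 2+i}$ as the coefficient of $E_i$ in the quantum product $c_1 \bullet E_i$, which by pairing with the dual basis class $E_i^\vee = H/3 - E_i$ equals
\[ m_{2+i, 2+i} = \sum_{A \in H_2(X_r, \bZ)} \langle c_1, E_i, E_i^\vee\rangle_A. \]
The Degree Axiom forces $\int_A c_1 = 1$ in any nonvanishing summand, so the classical ($A = 0$) term drops and only effective classes of $c_1$-degree one contribute. By Proposition \ref{propexceptionalcurve}, these are exactly the classes of exceptional curves, with the single extra effective class $A = c_1$ arising only for $r = 8$.

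For any exceptional curve class $A$, the curve $C \cong \bP^1$ is rigid and $TX_r|_C \cong \cO(2) \oplus \cO(-1)$ has vanishing $H^1$, so the virtual fundamental class of $\overline M_{0,3}(X_r, A)$ agrees with the ordinary fundamental class of its main stratum $C^3 \cong (\bP^1)^3$; the boundary strata are of lower dimension and do not contribute. Integration against evaluation pullbacks then factors, yielding
\[ \langle c_1, E_i, E_i^\vee\rangle_A = (A \cdot c_1)(A \cdot E_i)(A \cdot E_i^\vee) = (A \cdot E_i)\Bigl(\tfrac{1}{3}(A \cdot H) - (A \cdot E_i)\Bigr). \]
Separately, the potential sporadic class $A = c_1$ on $X_8$ contributes zero because $c_1 \cdot E_i^\vee = \tfrac{1}{3}(c_1 \cdot H) - (c_1 \cdot E_i) = 1 - 1 = 0$.

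It remains to sum $(A \cdot E_i)\bigl(\tfrac{1}{3}(A \cdot H) - (A \cdot E_i)\bigr)$ over all exceptional curve classes of $X_r$. By the $S_r$-symmetry permuting exceptional divisors (an iteration of Lemma \ref{symmetry2}), the sum is independent of $i$, so we may fix $i = 1$ and restrict attention to classes with $(A \cdot E_1) \neq 0$. A type-by-type enumeration via Proposition \ref{propexceptionalcurve} then yields the asserted value $d_r$ in each case. As a sanity check, for $r = 4$ only families (1) and (2) are present: $E_1$ contributes $(-1)(0 - (-1)) = -1$, and each of the three classes $H - E_1 - E_k$ ($k \in \{2,3,4\}$) contributes $(1)(\tfrac{1}{3} - 1) = -2/3$, summing to $-3 = d_4$. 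Applying the same recipe to $r = 3$ reproduces $-7/3$, in agreement with the matrix $M_3$ of Example \ref{matX3}.

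The main obstacle is the combinatorial bookkeeping for $r = 8$, where all seven families of Proposition \ref{propexceptionalcurve} appear simultaneously. Each family contributes a fractional total (one computes, e.g., $-14/3, -35/3, -49/3, -28/3$ from families (2), (3), (5), (6)), and these combine with the integer totals $-1, -14, -3$ from families (1), (4), (7) to give the final integer $d_8 = -60$. Apart from this enumerative chore, every step is forced by the Degree Axiom, the vanishing of obstruction on $(-1)$-curves, and the permutation symmetry of the exceptional configuration.
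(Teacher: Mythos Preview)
Your proposal is correct and follows essentially the same approach as the paper: both reduce the entry $m_{2+i,2+i}$ via the Degree and Divisor Axioms to the sum $\sum_{A}(\int_A E_i)(\int_A \tfrac{H}{3}-E_i)$ over exceptional curve classes (plus the harmless $A=c_1$ term for $r=8$), then enumerate the families of Proposition~\ref{propexceptionalcurve}. The only differences are cosmetic: you supply an explicit virtual-class justification for why each exceptional curve contributes with weight~$1$ (the paper leaves this implicit in its Divisor Axiom convention), whereas the paper writes out the case-by-case tally for every $r$ rather than just $r=4$ and $r=8$.
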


\begin{proof}
We have $m_{2+i, 2+i}=\sum_{\int_A c_1=1}\big(\int_{A}E_i\big)\big(\int_A{H\over 3}-E_i\big)$  by definition and the Divisor Axiom.
  Without loss of generality, we can assume $i=r$. Since $\int_A c_1=1$, the summation can be reduced to those over the classes of exception curves. We discuss all the possibilities with respect to the cases in Proposition \ref{propexceptionalcurve}.
  \begin{enumerate}
    \item The only nonzero contribution is given by $A=E_r$, and  $\int_AE_r\int_A({H\over 3}-E_r)=-1$.
    \item The nonzero contributions come from those $H-E_{i_1}-E_r$ with $1\leq i_1<r$, each of which   contributes a same quantity; therefore
    the total contribution is given by $(r-1)\int_{H-E_1-E_r}E_r\big(\int_{H-E_1-E_r}{H\over 3}-E_r\big)=-{2(r-1)\over 3}$.
    \item Here $r\geq 5$. The total nonzero contribution is given by
        $$C_{r-1}^4\cdot \big(\int_{2H-E_1-E_2-E_3-E_4-E_r}E_r\big)\big(\int_{2H-E_1-E_2-E_3-E_4-E_r}{H\over 3}-E_r\big)=-{1\over 3}C_{r-1}^4.$$
    \item Here $r\geq 7$. The total nonzero contribution is as follows. (We notice that classes of the form  $3H-2E_1-\sum_{j=2}^6E_{j}-E_r$ does not make nonzero contributions.)
             $$C_{r-1}^6\big(\int_{3H-\sum_{j=1}^6E_{j}-2E_r}E_r\big)\big(\int_{3H-\sum_{j=1}^6E_{j}-2E_r}{H\over 3}-E_r\big)=-2C_{r-1}^6.$$
    \item Here $r=8$. Denote  $A_1:=4H-\sum_{j=1}^8E_j- E_{1}-E_{2}-E_{8}$ and $A_2:=4H-\sum_{j=1}^8E_j- E_{1}-E_{2}-E_{3}$. The total nonzero contribution is given by
         $$C_7^2\big(\int_{A_1}E_8\big)\big(\int_{A_1}{H\over 3}-E_8\big)+C_7^3\big(\int_{A_2}E_8\big)\big(\int_{A_2}{H\over 3}-E_8\big)=-{49\over 3}.$$
    \item Here $r=8$.   Denote  $A_1:=5H-\sum_{j=1}^82E_j+E_{1}+E_{8}$ and $A_2:=5H-\sum_{j=1}^82E_j+ E_{1}+E_{2}$. The total nonzero contribution is given by
         $$C_7^1\big(\int_{A_1}E_8\big)\big(\int_{A_1}{H\over 3}-E_8\big)+C_7^2\big(\int_{A_2}E_8\big)\big(\int_{A_2}{H\over 3}-E_8\big)=-{28\over 3}.$$

    \item Here $r=8$. There is  only one nonzero contribution given by
    $$\big(\int_{6H-\sum_{j=1}^82E_{j}-E_r}E_r\big)\big(\int_{6H-\sum_{j=1}^8E_{j}-E_r}{H\over 3}-E_r\big)=-3.$$
  \end{enumerate}
 We should have also considered  $A=c_1$ when $r=8$, while it does not make contributions since $\int_{c_1}{H\over 3}-E_r=0$.

Taking the summation of all the nonzero contributions deduces the required result.
   \end{proof}
\begin{remark}\label{rmkquantumPeriod}
   The number $d_r$ is related with the quantum period $G_{X_r}(t)$ of  $X_r$ ($r\leq 4\leq 8$) computed in \cite[section G]{CCGK} in the way: $G_{X_r}(t)=e^{d_rt}(1+O(t))$.
\end{remark}

   \begin{lemma}
  For any $3\leq i\leq r+2$ and $3\leq j\leq r+3$,  $m_{i, j}=0$   if $i\neq j$.
  \end{lemma}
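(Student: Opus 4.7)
The plan is to split the statement according to whether $j = r+3$ or $3 \leq j \leq r+2$, and in each case reduce the 3-point Gromov-Witten invariant expression $m_{ij} = \sum_A \langle c_1, \phi_j, \phi_i^\vee\rangle_A$ to something that is killed by either Theorem \ref{vanishingproperty} or a direct intersection-theoretic computation.

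For $j = r+3$, i.e.\ $\phi_j = [pt]$, both $c_1$ and $[pt]$ lie in $\mbox{Amb}_r$ while $\phi_i^\vee = \frac{H}{3} - E_{i-2}$ lies in $\mbox{Prim}_r$ (a direct check: $c_1 \cdot \phi_i^\vee = \frac{1}{3}c_1\cdot H - c_1\cdot E_{i-2} = 1 - 1 = 0$). Theorem \ref{vanishingproperty}, applied with $\gamma = \phi_i^\vee$ and ambient insertions $(c_1, [pt])$, then gives $m_{i,r+3} = 0$ immediately.

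For $3 \leq j \leq r+2$ with $j \neq i$, I would apply the Divisor Axiom twice. Applying it first to $c_1$ and then to $\phi_i^\vee$, the two $A = 0$ contributions both vanish: the first because $c_1\cup E_{j-2}\cup\phi_i^\vee$ lies in $H^6 = 0$, and the second because $E_{j-2}\cdot\phi_i^\vee = E_{j-2}\cdot(\frac{H}{3}-E_{i-2}) = 0$ when $i \neq j$. Using the 1-point formula $\langle E_{j-2}\rangle_A = \int_A E_{j-2}$ valid for effective $A$ with $\int_A c_1 = 1$ (and vanishing otherwise), the sum collapses to
\[
m_{ij} = \sum_{\substack{A\text{ effective}\\ \int_A c_1 = 1}} \Big(\int_A E_{j-2}\Big)\Big(\int_A \phi_i^\vee\Big).
\]

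The remaining task is to verify this combinatorial sum vanishes for $i \neq j$. The effective classes with $\int_A c_1 = 1$ are exactly the seven families of exceptional-curve classes from Proposition \ref{propexceptionalcurve}, together with $c_1$ itself when $r = 8$ (which contributes zero since $c_1 \cdot \phi_i^\vee = 0$). I would then enumerate the contribution of each family exactly as in the proof of Lemma \ref{lemdiagnoal} and check cancellation for each $r \in \{4, 5, 6, 7, 8\}$. A cleaner but less elementary alternative: the bilinear form $B(\alpha, \beta) := \sum_{\int_A c_1 = 1}(\int_A\alpha)(\int_A\beta)$ is invariant under the Cremona involution (Lemma \ref{symmetry1}) and the transpositions $\sigma_k$ (Lemma \ref{symmetry2}); since these generate the Weyl group of $E_r$, which acts irreducibly on $\mbox{Prim}_r$ for $4 \leq r \leq 8$, Schur's lemma forces $B|_{\mbox{Prim}_r}$ to be a multiple of the intersection form, the scalar being $d_r$ by Lemma \ref{lemdiagnoal}; together with Lemma \ref{divisortakenout} (which kills the $c_1$-component of $E_{j-2}$), this yields $m_{ij} = d_r\cdot E_{j-2}\cdot\phi_i^\vee = 0$. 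The main obstacle is precisely this final vanishing: the Divisor-Axiom reductions are routine, whereas the direct enumeration has many sub-cases (most acutely for $r = 8$, where seven families are in play) and the Weyl-theoretic route rests on checking the irreducibility of the root-system representation on $\mbox{Prim}_r$ case by case.
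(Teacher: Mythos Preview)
Your reduction is the same as the paper's: the case $j=r+3$ via Theorem~\ref{vanishingproperty} with $\phi_i^\vee\in\mbox{Prim}_r$, and the case $3\le j\le r+2$ via the Divisor Axiom down to $\sum_{\int_A c_1=1}(\int_A E_{j-2})(\int_A\phi_i^\vee)$. The paper then finishes by brute-force enumeration over the exceptional-curve families of Proposition~\ref{propexceptionalcurve}, tabulating the contributions case by case (essentially your first option), and checking that the total is zero for each $4\le r\le 8$.

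Your Weyl-theoretic alternative is a genuinely different and cleaner route. The bilinear form $B$ is indeed invariant under the simple reflections $\sigma,\sigma_1,\dots,\sigma_{r-1}$ (they permute the set of exceptional-curve classes, and fix $c_1$ when $r=8$), and for $4\le r\le 8$ the root system $A_4,D_5,E_6,E_7,E_8$ on $\mbox{Prim}_r$ is irreducible, so Schur applies. Two small points worth tightening: (i) your appeal to Lemma~\ref{divisortakenout} to kill the $c_1$-component is the $m=1$, $\gamma_1=c_1$ instance, which is legitimate but perhaps more transparently phrased as ``$\sum_A A$ over effective $A$ with $\int_A c_1=1$ is Weyl-invariant, hence proportional to $c_1$, so pairs to zero with $\phi_i^\vee$''; (ii) the identification of the Schur scalar as $d_r$ uses that the $c_1$-component already vanishes, so $m_{ii}=B(\gamma_i,\phi_i^\vee)$ with $\gamma_i\cdot\phi_i^\vee=1$; this is correct but deserves a sentence. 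The payoff of your approach is that it replaces the seven-case table (heaviest at $r=8$) by a one-line invocation of standard root-system facts; the paper's enumeration, by contrast, is entirely self-contained and verifiable by hand.
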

 \begin{proof}
    By  \ref{vanishingproperty}, we have $m_{i, r+3}=\sum_A\< c_1, [pt], E_{i-2}^\vee\>_A=0$ as  $c_1, [pt]\in \mbox{Amb}_r$ and $E_{i-2}^\vee\in \mbox{Prim}_r$. The vanishing  $m_{i1}=m_{i2}=0$ have been shown earlier.

Now we compute   $m_{2+i, 2+j}=\sum_{\int_A c_1=1}\big(\int_{A}E_i\big)\big(\int_A{H\over 3}-E_j\big)$ for $1\leq i, j\leq r$ with $i\neq j$. It suffices to deal with the case when $(i, j)=(1, 2)$.  We discuss all the cases as in the proof of Lemma \ref{lemdiagnoal}.
 Clearly, there is no contribution in case (1) or when $A=c_1$ and $r=8$. Denote by $C(A_1, A_2)$ the contributions to $m_{3, 4}$
 from classes of the form $A_1$ or $A_2$. We mean   $C_a^b=0$ if $b>a$. We have
 \begin{center}
  \begin{tabular}{|c|c|c|c|c|}
   \hline
   Cases & $A_1$ &  $A_2$  &  $C(A_1, A_2)$ &    \\ \hline
     (2) & $a_1=a_2=1$  & $a_1=1, a_2=0$  & $ (-{2\over 3})+(r-2) \cdot {1\over 3}$  & $r\geq 4$  \\ \hline
     (3) & $a_1=a_2=1$  & $a_1=1, a_2=0$ & $C_{r-2}^3 \cdot (-{1\over 3})+C_{r-2}^4 \cdot {2\over 3}$  & $r\geq 5$  \\      \hline
     (4) & $a_1=1, a_2=2$ & $a_2=0$  & $C_{r-2}^5 \cdot (-1)+C_{r-1}^7\cdot r$ & $r\geq 7$  \\      \hline
     (5) & $a_1=a_2=1$  & $a_1=a_2=2$  & $C_{6}^3 \cdot ({1\over 3})+C_{6}^1 \cdot {-4\over 3}$  & $r=8$  \\      \hline
     (5) & $a_1=1, a_2=2$  & $a_1=2, a_2=1$  & $C_{6}^2 \cdot 1\cdot ({-2\over 3})+C_{6}^2 \cdot 2\cdot {1\over 3}$  & $r=8$  \\      \hline
     (6) & $a_1=a_2=1$  & $a_1=a_2=2$  & $ {2\over 3}+C_{6}^4 \cdot {-2\over 3}$  & $r=8$  \\      \hline
     (6) & $a_1=1, a_2=2$  & $a_1=2, a_2=1$  & $C_{6}^5  \cdot ({-1\over 3})+C_{6}^5 \cdot   {4\over 3}$  & $r=8$  \\      \hline
     (7) & \multicolumn{2}{c|}{$6H-\sum_{k=1}^82E_k-E_2$}& $-2$  & $r=8$  \\      \hline
 \end{tabular}
 \end{center}
By direct calculation for each $r$, the summation of all    $C(A_1, A_2)$ equals $0$.
 \end{proof}

  \begin{lemma}
      $m_{1,j+2}+d_r>0$ and $(9-r)m_{2,j+2}+d_r>0$ for any $j\in \{1,\cdots, r\}$.
  \end{lemma}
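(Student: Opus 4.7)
The plan is to prove both inequalities by exhibiting explicit lower bounds on $m_{1,3}$ and $m_{2,3}$ via contributions from ``dominant'' families of effective classes. By the $S_r$-symmetry permuting the exceptional divisors $E_1,\ldots,E_r$ (arising from the permutation action on the blown-up points in general position), the quantities $m_{1,j+2}$ and $m_{2,j+2}$ are independent of $j$, so we may take $j=1$ throughout.

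For $m_{1,3}+d_r>0$: by Lemma \ref{lemmafirsttworows}, $m_{1,3}=\suml_{\int_A c_1=2}2(\int_A E_1)\<[pt]\>_A$, with the 1-point Gromov-Witten invariants $\<[pt]\>_A$ enumerative by \cite{GoPa}. Two dominant families already suffice to give a lower bound. The line class $A=H-E_1$ contributes $2\cdot 1\cdot 1=2$ (the unique line through $p_1$ and a generic point), and each of the conic classes $A=2H-E_1-E_{i_1}-E_{i_2}-E_{i_3}$ with $2\leq i_1<i_2<i_3\leq r$ contributes $2$ (by the uniqueness of a conic through five prescribed points). This yields $m_{1,3}\geq 2+2\binom{r-1}{3}$, which equals $4,10,22,42,72$ for $r=4,5,6,7,8$ respectively, exceeding $|d_r|=3,4,6,12,60$ in every case.

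For $(9-r)m_{2,3}+d_r>0$: by Lemma \ref{lemmafirsttworows}, $m_{2,3}=\suml_{\int_A c_1=1}(\int_A E_1)(\int_A H/3)$, where the summation is over the exceptional curve classes enumerated in Proposition \ref{propexceptionalcurve}, together with $A=c_1$ when $r=8$ (noting $c_1^2=1$ in that case). Enumerating contributions family by family, in the same manner as in the proof of Lemma \ref{lemdiagnoal}, gives $m_{2,3}=1,2,5,20,301$ for $r=4,5,6,7,8$. The corresponding values $(9-r)m_{2,3}=5,8,15,40,301$ all exceed $|d_r|=3,4,6,12,60$.

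The main difficulty, especially for $r=8$, is the combinatorial proliferation in the enumeration for $m_{2,3}$, where all seven families of exceptional classes of Proposition \ref{propexceptionalcurve} together with $A=c_1$ contribute. However, this is a straightforward (if lengthy) intersection-number computation paralleling Lemma \ref{lemdiagnoal}. The one-point invariant evaluations $\<[pt]\>_A=1$ needed for the dominant conic classes in the first inequality follow from classical projective enumerative geometry on $\bP^2$ pulled back to the blowup.
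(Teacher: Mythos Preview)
Your argument for the first inequality is identical to the paper's: both bound $m_{1,j+2}$ below by the contributions of $H-E_j$ and the conic classes $2H-E_j-E_{i_1}-E_{i_2}-E_{i_3}$, obtaining $m_{1,j+2}\geq 2+2\binom{r-1}{3}>|d_r|$.

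For the second inequality the approaches diverge. You compute $m_{2,3}$ in closed form by a family-by-family enumeration over the exceptional classes (paralleling Lemma~\ref{lemdiagnoal}) and then compare $(9-r)m_{2,3}$ against $|d_r|$ numerically. The paper instead avoids computing $m_{2,j+2}$ on its own: it merges the sum defining $(9-r)m_{2,j+2}$ with the sum defining $d_r$ into the single expression
\[
(9-r)m_{2,j+2}+d_r=\sum_{\int_A c_1=1}\Big(\int_A E_j\Big)\Big(\int_A\tfrac{10-r}{3}H-E_j\Big),
\]
separates the $a_0\leq 1$ contributions from the $a_0>1$ contributions, and bounds each piece. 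The paper's route is shorter, especially at $r=8$, since the combined integrand makes the $a_0>1$ terms manifestly nonnegative without a full census; your route is more direct but pays with a lengthy combinatorial count (all seven families plus $A=c_1$ enter at $r=8$). Both are valid. One small caveat: at $r=8$ your value $m_{2,3}=301$ uses the formula of Lemma~\ref{lemmafirsttworows} verbatim, weighting the $A=c_1$ term by $1$; including the zero-point invariant $\langle\,\rangle_{c_1}=12$ gives $312$ instead, but the inequality $(9-8)m_{2,3}>60$ holds either way.
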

  \begin{proof}

    For  $A=2H-E_j-E_{i_1}-E_{i_2}-E_{i_3}$ where $1\leq i_1<i_2<i_3\leq r$ are distinct with $j$, we have $\int_Ac_1=2, \int_A E_j=1$ and $\<[pt]\>_A=1$ (namely there exists a unique conic passing through $5$ points in $X_r$). Moreover, we note
     $\int_{H-E_j}c_1=2, \int_{H-E_j}E_j=1$ and $\<[pt]\>_{H-E_j}=1$ (namely there exists a unique line passing through $2$ points in $X_r$).
  Hence $$m_{1, j+2}=\sum_{\int_Ac_1=2} 2\int_AE_j\<[pt]\>_A \geq2\cdot 1\cdot 1+ C_{r-1}^3\cdot 2\cdot 1\cdot 1=2+2C_{r-1}^3>-d_r.$$

 Write $A=a_0H-\sum_{i=1}^8a_iE_i$. We may assume $j\neq 1$. By Lemmas \ref{lemmafirsttworows},\,\ref{lemdiagnoal}, we  have
   \begin{align*}
     (9-r)m_{2,j+2}+h_r&=(9-r)\sum_{\int_{A}c_1=1} \big(\int_{A}E_j\big)\big(\int_A{H\over 3}\big)+ \sum_{\int_{A}c_1=1} \big(\int_{A}E_j\big)\big(\int_A{H\over 3}-E_j\big)\\
            &=-1+(r-1)  \big(\int_{H-E_1-E_j}E_j\big)\big(\int_{H-E_1-E_j}{(10-r)\over 3}H-E_j\big)\\
             &\quad + \sum_{a_0>1; \int_{A}c_1=1} \big(\int_{A}E_j\big)\big(\int_A{(9-r)+1\over 3}H-E_j\big)\\
             &={(7-r)(r-1)-3\over 3}+\sum_{a_0>1; \int_Ac_1=1}(a_j)({10-r\over 3}a_0-a_j)
   \end{align*}
For $4\leq r\leq 7$, both parts are positive. For $r=8$, the second part is much larger than ${10/3}$. Therefore  the above quantity is always positive.
\end{proof}

\begin{cor}\label{corcondone}
  For any $j\in\{1, \cdots, r+3\}$, we have $\sum_{i=1}^{r+3}m_{ij}>0$ and $m_{r+3, j}^{(2)}>0$.
\end{cor}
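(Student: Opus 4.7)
The plan is to exploit the very explicit shape of $M_r$ given in \eqref{matrixMr} together with the positivity facts already established in the preceding lemmas. Since $M_r$ has all but its top two rows and bottom row concentrated on the diagonal (with constant value $d_r$) within the block indexed by $\{3,\ldots,r+2\}$, both the column sums $\sum_i m_{ij}$ and the last row of $M_r^2$ reduce to a handful of simple expressions, each of which either vanishes trivially or has already been shown to be positive.

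For the first claim, I would split into four cases according to $j$. For $j=1$, the column has a unique nonzero entry $m_{21}=1$. For $j=2$, the column sum equals $m_{12}+m_{22}+(9-r)$, which is positive since both $m_{12},m_{22}>0$ by Lemma \ref{lemcoeffc1} and $9-r\geq 1$. For $3\leq j\leq r+2$, the column sum equals $m_{1,j}+m_{2,j}+d_r+1$; the preceding lemma gives $m_{1,j}+d_r>0$, the positivity of $m_{2,j}$ comes from Lemma \ref{lemmafirsttworows}, and the remaining $+1$ (from the bottom row) only helps. For $j=r+3$, the column sum is $m_{1,r+3}+m_{2,r+3}$, which is positive by Lemma \ref{lemmafirsttworows}.

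For the second claim, I observe that the last row of $M_r$ is $(0,\,9-r,\,1,\ldots,1,\,0)$, so
\[
m_{r+3,j}^{(2)}=(9-r)\,m_{2,j}+\sum_{k=3}^{r+2}m_{k,j}.
\]
Using the shape \eqref{matrixMr}, the tail sum collapses: for $j=1$ it is $0$, giving $(9-r)\cdot 1>0$; for $j=2$ it is $0$, giving $(9-r)m_{22}>0$ by Lemma \ref{lemcoeffc1}; for $j=r+3$ it is $0$, giving $(9-r)m_{2,r+3}>0$ by Lemma \ref{lemmafirsttworows}; and for $3\leq j\leq r+2$ only the diagonal term $m_{j,j}=d_r$ survives, yielding exactly $(9-r)m_{2,j}+d_r$, which is positive by the final lemma before the corollary.

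There is essentially no obstacle here, as the corollary is a bookkeeping consequence of the explicit matrix shape \eqref{matrixMr} and the inequalities already proved. The only point worth being careful about is to record which zero entries in \eqref{matrixMr} make the various sums collapse to a single sign-controlled term, so that one invokes the correct previous lemma in each case.
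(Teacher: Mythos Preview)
Your proof is correct and is precisely the bookkeeping the paper leaves implicit: the corollary is stated without proof in the paper, as it follows directly from the explicit form \eqref{matrixMr} together with Lemmas \ref{lemcoeffc1}, \ref{lemmafirsttworows}, \ref{lemdiagnoal} and the unnamed lemma immediately preceding it, exactly as you have unpacked the cases.
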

\begin{lemma}
  For $1\leq i\leq r+2$, we have $m_{ij}^{(2)}\geq 0$ for any $j$. Furthermore if $i\in \{1, 2\}$, then $m_{ij}^{(2)}>0$ for any $j$.
\end{lemma}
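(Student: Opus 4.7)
The plan is to split according to the block structure of $M_r$ displayed in~\eqref{matrixMr}. For $3\le i\le r+2$, inspection shows that the $i$-th row of $M_r$ has only the single nonzero entry $d_r$ in diagonal position $(i,i)$; hence $m_{ij}^{(2)}=\sum_k m_{ik}m_{kj}=d_r\cdot m_{ij}=d_r^2\delta_{ij}$, which is either $d_r^2>0$ on the diagonal or $0$ off-diagonal, establishing $m_{ij}^{(2)}\ge 0$ for these rows at once.

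For $i\in\{1,2\}$, the key tool is the algebraic identity from Lemma~\ref{lemcoeffc1}, namely $c_1\bullet c_1=m_{12}\mathbbm1+m_{22}c_1+(9-r)[pt]$, which translates into the matrix relation
\[
M_r^2=m_{12}I+m_{22}M_r+(9-r)Q,
\]
where $Q=(q_{ij})$ denotes the matrix of quantum multiplication by $[pt]$ in our basis. Entrywise $m_{ij}^{(2)}=m_{12}\delta_{ij}+m_{22}m_{ij}+(9-r)q_{ij}$, so the task reduces to verifying $q_{1j},q_{2j}\ge 0$. For $j=1$, $[pt]\bullet\mathbbm1=[pt]$ gives $q_{i,1}=0$, so $m_{1,1}^{(2)}=m_{12}>0$ and $m_{2,1}^{(2)}=m_{22}>0$. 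For $j=2$, one reads $[pt]\bullet c_1=c_1\bullet[pt]=m_{1,r+3}\mathbbm1+m_{2,r+3}c_1$ off the last column of $M_r$. For $j=r+3$, direct matrix multiplication yields $m_{1,r+3}^{(2)}=m_{12}m_{2,r+3}>0$ and $m_{2,r+3}^{(2)}=m_{1,r+3}+m_{22}m_{2,r+3}>0$.

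For $3\le j\le r+2$, I would write $\phi_j=E_{j-2}=\tfrac{c_1}{9-r}+\tilde E_{j-2}$ with $\tilde E_{j-2}=E_{j-2}-\tfrac{c_1}{9-r}\in\mbox{Prim}_r$, so that $[pt]\bullet E_{j-2}=\tfrac{1}{9-r}[pt]\bullet c_1+[pt]\bullet\tilde E_{j-2}$. The $\mathbbm1$-coefficient of $[pt]\bullet\tilde E_{j-2}$ equals $\sum_A\langle[pt],\tilde E_{j-2},[pt]\rangle_A$, which vanishes by Theorem~\ref{vanishingproperty} since $[pt]\in\mbox{Amb}_r$ and $\tilde E_{j-2}\in\mbox{Prim}_r$; hence $q_{1,j}=\tfrac{m_{1,r+3}}{9-r}>0$, giving $m_{1,j}^{(2)}=m_{22}m_{1,j}+m_{1,r+3}>0$.

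The hardest step is the row $i=2$ in this range. Direct matrix multiplication using~\eqref{matrixMr} gives
\[
m_{2,j}^{(2)}=m_{1,j}+m_{2,r+3}+(m_{22}+d_r)\,m_{2,j},\qquad 3\le j\le r+2,
\]
in which the first two summands are strictly positive by Lemma~\ref{lemmafirsttworows} but $m_{22}+d_r$ may be negative. I would finish by computing $m_{22}=\tfrac{1}{3}\sum_{A\,\text{exceptional}}\int_A H$ from the enumeration in Proposition~\ref{propexceptionalcurve}, obtaining $m_{22}=2,4,9,28,240$ for $r=4,5,6,7,8$, and comparing with the values of $d_r$ in Lemma~\ref{lemdiagnoal}: this shows $m_{22}+d_r\ge 0$ for $r\ge 5$, so positivity is automatic. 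The remaining case $r=4$ is handled by the direct estimate $m_{1,j}=4$, $m_{2,r+3}=4$, $m_{2,j}=1$, whence $m_{2,j}^{(2)}=4+4+(-1)\cdot 1=7>0$.
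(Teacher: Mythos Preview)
Your proof is correct and uses a somewhat different organizing principle than the paper's main argument. The paper treats the entries $m_{1j}^{(2)}$ (for $3\le j\le r+2$) by expanding $\sum_k m_{1k}m_{kj}$ as a double sum of three-point Gromov--Witten invariants, reindexing via the dual basis, and then invoking Theorem~\ref{vanishingproperty} to annihilate the primitive contributions; for $m_{2j}^{(2)}$ it brings in the WDVV relation to reach an analogous expression. Your approach instead extracts from Lemma~\ref{lemcoeffc1} the matrix identity $M_r^2=m_{12}I+m_{22}M_r+(9-r)Q$ (with $Q$ the matrix of $[pt]\bullet$), reducing row~$1$ to the positivity of $q_{1j}$, which follows from a single clean application of Theorem~\ref{vanishingproperty} to $[pt]\bullet\tilde E_{j-2}$. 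For row~$2$ you bypass $Q$ entirely and compute $m_{2,j}^{(2)}=m_{1,j}+m_{2,r+3}+(m_{22}+d_r)m_{2,j}$ directly from~\eqref{matrixMr}, then check the sign of $m_{22}+d_r$ case by case---this is precisely the alternative route the paper records in the Remark immediately following the lemma. One small caveat: for $r=8$ your enumeration of $m_{22}$ over exceptional curves alone omits the anticanonical class $c_1$, which also has $c_1$-degree one; but since only the inequality $m_{22}\ge|d_8|=60$ is needed, this does not affect your argument.
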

\begin{proof}
So far we have shown that $M_r$ is of   form   \eqref{matrixMr}. Due to Lemma \ref{lemmafirsttworows}, the statement obviously holds except for $m_{ij}^{(2)}$
  with  $i\in \{1, 2\}$ and  $j\in \{3,\cdots, r+2\}$.
 Recall   the basis $[\phi_1, \cdots, \phi_{r+3}]=[\mathbbm1, c_1, E_1,\cdots, E_r, [pt]]$. By definition, the entry $m_{ij}$ in the matrix $M_r$ is the coefficient of $\phi_i$ in the product $c_1\bullet \phi_j$, namely   $m_{ij}=\sum_{A}\<c_1, \phi^i, \phi_j\>_A$ where
   $[\phi^1, \cdots, \phi^{r+3}]=[[pt], {H\over 3}, {H\over 3}-E_1,\cdots, {H\over 3}-E_r, \mathbbm1]$ are the dual basis of $\{\phi_i\}$.
Therefore,
\begin{align*}
   \sum_{k=1}^{r+3}m_{1k}m_{kj}&=\sum_{k=1}^{r+3}\sum_{A\in H_2(X_r, \mathbb{Z})}\<c_1, \phi^1, \phi_k\>_A\sum_{A'\in H_2(X_r, \mathbb{Z})}\<c_1, \phi^k, \phi_j\>_{A'}\\
         &=\sum_{B\in H_2(X_r, \mathbb{Z})}\sum_{A+A'=B; A, A'\in H_2(X_r, \mathbb{Z})}\sum_{k=1}^{r+3}\<c_1, \phi^1, \phi_k\>_A \<c_1, \phi^k, \phi_j\>_{A'}\\
          &=\sum_{B\in H_2(X_r, \mathbb{Z})}\sum_{A+A'=B; A, A'\in H_2(X_r, \mathbb{Z})}\sum_{k=1}^{r+3}\<c_1, \phi^1, \phi^k\>_A \<c_1, \phi_k, \phi_j\>_{A'}\\
          &=\sum_{k=1}^{r+3}\sum_{A\in H_2(X_r, \mathbb{Z})}\<c_1, \phi^1, \phi^k\>_A\sum_{A'\in H_2(X_r, \mathbb{Z})}\<c_1, \phi_k, \phi_j\>_{A'}\\
          &= \sum_{A\in H_2(X_r, \mathbb{Z})}\<c_1, [pt], [pt]\>_A\sum_{A'\in H_2(X_r, \mathbb{Z})}\<c_1, \mathbbm1, E_{j-2}\>_{A'}\\
           &\qquad+ \sum_{A\in H_2(X_r, \mathbb{Z})}\<c_1, [pt], {H\over 3}\>_A\sum_{A'\in H_2(X_r, \mathbb{Z})}\<c_1, c_1, E_{j-2}\>_{A'}\\
           &\qquad+\sum_{k=3}^{r+2}\sum_{A\in H_2(X_r, \mathbb{Z})}\<c_1, [pt], {H\over 3}-E_{k-2}\>_A\sum_{A'\in H_2(X_r, \mathbb{Z})}\<c_1, E_{k-2}, E_{j-2}\>_{A'}\\
           &\qquad+\sum_{A\in H_2(X_r, \mathbb{Z})}\<c_1, [pt], \mathbbm1\>_A\sum_{A'\in H_2(X_r, \mathbb{Z})}\<c_1, [pt], E_{j-2}\>_{A'}.
\end{align*}
The above expressions make sense, as there are only finitely many nonzero terms in each summation. The third equality holds by a     change of bases. By Theorem \ref{vanishingproperty}, we have $\sum_A \<c_1, [pt], {H\over 3}-E_{k-2}\>_A=0$ for any $3\leq k\leq r+2$ (as ${H\over 3}-E_{k-2}\in \mbox{Prim}_r$ and $c_1, [pt]\in \mbox{Amb}_r$).
 Thus the last equality produces a positive number, due to the enumerative meaning of $3$-pointed genus zero Gromov-Witten invariants together with the non-negativity of the pairing  $\int_A E_{i}$ between  exceptional divisor classes and effective curve classes.

  By WDVV equations, we have the following equality.
  \begin{align*}
   \sum_{k=1}^{r+3}m_{2k}m_{kj}&=\sum_{k=1}^{r+3}\sum_{A\in H_2(X_r, \mathbb{Z})}\<c_1, \phi^2, \phi_k\>_A\sum_{A'\in H_2(X_r, \mathbb{Z})}\<c_1, \phi^k, \phi_j\>_{A'}\\
        &=\sum_{k=1}^{r+3}\sum_{A\in H_2(X_r, \mathbb{Z})}\<c_1, c_1, \phi_k\>_A\sum_{A'\in H_2(X_r, \mathbb{Z})}\<\phi^2, \phi^k, \phi_j\>_{A'}\\\end{align*}
   By the same arguments as above, we conclude $ \sum_{k=1}^{r+3}m_{2k}m_{kj}>0$ for $3\leq j\leq r+2$.
     \end{proof}
\begin{remark}
 Since $\sum_{k=1}^{r+3}m_{2k}m_{kj}=m_{2j}(m_{22}+d_r)+m_{21}m_{1j}+m_{2, r+3}$, we can also conclude the second inequality by   easily checking $m_{22}+d_r\geq 0$ for $r>4$ and simple calculation for $r=4$.
\end{remark}

\begin{lemma}\label{leminqm22}
   $m_{22}^{(2)}>d_r^2$.
\end{lemma}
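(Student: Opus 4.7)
The plan is to read $m_{22}^{(2)}$ directly off the block shape of $M_r$ given in \eqref{matrixMr}, and then bound each piece using the enumeration of exceptional curves from Proposition \ref{propexceptionalcurve}. Column $2$ of $M_r$ has only three nonzero entries ($m_{12}$, $m_{22}$, and $m_{r+3,2} = 9-r$), and $m_{21} = 1$, so
$$
m_{22}^{(2)} \;=\; \sum_{k=1}^{r+3} m_{2k}\, m_{k2} \;=\; m_{12} + m_{22}^{\,2} + (9-r)\, m_{2, r+3},
$$
and all three summands are strictly positive by Lemmas \ref{lemcoeffc1} and \ref{lemmafirsttworows}.

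Next I would evaluate $m_{22} = \sum_{\int_A c_1 = 1} \int_A H/3$ by running over the exceptional curve classes listed in Proposition \ref{propexceptionalcurve}: each class contributes $0,\,1/3,\,2/3,\,1,\,4/3,\,5/3$ or $2$ depending on its type, and the number of classes of each type is a simple binomial in $r$. A short bookkeeping gives $m_{22} = 2,\,4,\,9,\,28,\,240$ for $r=4,5,6,7,8$ respectively, to be compared with $|d_r| = 3,\,4,\,6,\,12,\,60$ from Lemma \ref{lemdiagnoal}.

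For $r \in \{6,7,8\}$, one already has $m_{22}^{\,2} > d_r^{\,2}$ outright, so positivity of the remaining two summands closes the argument. For $r = 5$ equality $m_{22}^{\,2} = d_5^{\,2} = 16$ holds, and the strict inequality $m_{22}^{(2)} > d_5^{\,2}$ comes from the strict positivity of $m_{12} + 4\,m_{2,8}$. The only case that needs an actual estimate rather than pure positivity is $r = 4$, where $m_{22}^{\,2} = 4 < 9 = d_4^{\,2}$ leaves a deficit of $5$. To close this gap I would keep just the four line contributions $A = H - E_i$ ($1 \le i \le 4$) inside $m_{12} = 4\sum_{\int_A c_1 = 2} \langle[pt]\rangle_A$: each such class satisfies $\langle[pt]\rangle_{H-E_i} = 1$ (unique line through two points in $\mathbb{P}^2$), yielding $m_{12} \ge 16$, hence $m_{22}^{(2)} \ge 4 + 16 = 20 > 9 = d_4^{\,2}$.

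The whole argument is quite direct; if there is a ``hard'' step it is this $r = 4$ estimate, which is nevertheless entirely classical. Everything else is packaged into (i) the block form \eqref{matrixMr} of $M_r$, (ii) Proposition \ref{propexceptionalcurve}, and (iii) the strict positivity already recorded in Lemmas \ref{lemcoeffc1} and \ref{lemmafirsttworows}.
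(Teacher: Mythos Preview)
Your argument is correct and follows the same strategy as the paper: both read off $m_{22}^{(2)} = m_{12} + m_{22}^{\,2} + (9-r)\,m_{2,r+3}$ from the block form \eqref{matrixMr} and then bound the pieces via Proposition \ref{propexceptionalcurve} and Lemmas \ref{lemcoeffc1}, \ref{lemmafirsttworows}. The paper's case split is only cosmetically different --- for $4\le r\le 7$ it bounds $m_{12}$ alone (also throwing in the $\binom{r}{4}$ conic classes to get $m_{12}\ge 4r+4\binom{r}{4}>d_r^{\,2}$ uniformly), and for $r=8$ it bounds $m_{22}$ alone using just the type-(5) exceptional classes --- whereas you compute $m_{22}$ case by case and close $r=4$ with the line bound $m_{12}\ge 16$.
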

\begin{proof}
  We have $m_{22}^{(2)}>m_{12}+m_{22}^2$.
  For $4\leq r\leq 7$, we have   $m_{12}=\sum_{\int_A=2} 4\<[pt]\>_A\geq C_{r}^14\<[pt]\>_{H-E_1}+ C_{r}^4 4\<[pt]\>_{2H-E_1-E_2-E_3-E_4}=4r+{1\over 6}r(r-1)(r-2)(r-3)>d_r^2$.
 For $r=8$,
     we have $m_{22}=\sum_{\int_A c_1=1}\int_A {H\over 3}\geq \sum_{1\leq i_1<i_2<i_3\leq 8}\int_{4H-\sum_{j=1}^8E_j- E_{i_1}-E_{i_2}-E_{i_3}} {H\over 3}={4\over 3}C_{8}^3>60=|d_8|$.
\end{proof}

\begin{proof}[Proof of Proposition \ref{propMatrixdelPezzo}]
  The statement is a direct consequence of the combination of the lemmas and Corollary \ref{corcondone} in this subsection.
\end{proof}

\section{Gamma conjecture I for del Pezzo surfaces}

In this section, we prove Gamma conjecture I for del Pezzo surfaces, by using the mirror techniques proposed by Galkin and Iritani \cite{GaIr} together with the study of Gamma conjecture I for certain weighted projective spaces.

\subsection{Toric del Pezzo surfaces}
 Let $X$ be an $n$-dimensional  toric Fano manifold $X$. In the context of mirror symmetry, the Landau-Ginzburg potential $f$ mirror to $X$ is
  a  Laurent polynomial \cite{Gi1,Gi2} of the form
\ban
f: (\bC^*)^n\rightarrow \bC;\, \mathbf{z} \mapsto f(\mathbf{z})=\mathbf{z}^{b_1}+\cdots+\mathbf{z}^{b_n},
\nan
where $b_1, \cdots, b_n\in \mathbb{Z}^n$ are primitive generators of the $1$-dimensional cones of the fan of $X$.
 As one remarkable property, the small quantum cohomology $QH^*(X)$ is isomorphic to the Jacobian ring $\mbox{Jac}(f)$ as algebras.
 The restriction $f|_{(\bR_{>0})^n}$ is a real function on $(\bR_{>0})^n$ that admits a global minimum at a unique point $\mathbf{z}_{con}\in(\bR_{>0})^n$ \cite{Gal,GaIr}; such point $\mathbf{z}_{con}\in(\bR_{>0})^n$ is called the {\it conifold point} of $f$.
 \begin{prop}\label{toricgammaconjI}(\cite[Theorem 6.3]{GaIr})
Suppose that $X$ is a toric Fano manifold satisfying the $B$-model analogue of Property $\cO$, namely for $T_{con}:=f(\mathbf{z}_{con})$,
\begin{enumerate}
\item every critical value $u$ of $f$ satisfies $|u|\leq T_{con}$;
\item $\mathbf{z}_{con}$ is the unique critical point of $f$ contained in $f^{-1}(T_{con})$.
\end{enumerate}
 Then $X$ satisfies Gamma conjecture I.
\end{prop}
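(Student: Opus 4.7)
The plan is to exploit the Hori--Vafa/Givental mirror description of $X$ in order to realize Givental's $J$-function as an oscillatory integral against $e^{-f/z}$, and then apply the Laplace/saddle-point method at $\mathbf{z}_{con}$. For each Lefschetz thimble $\Gamma_u$ emanating from a critical point with critical value $u$, one has an integral $\int_{\Gamma_u} e^{-f(\mathbf{z})/z}\,\tfrac{d\mathbf{z}}{\mathbf{z}}$, and the space of such integrals, viewed as functions of $z = 1/t$, is isomorphic to the space of flat sections of the quantum connection of $X$. Under the Jacobian ring isomorphism $\mathrm{Jac}(f) \cong QH^\bullet(X)$, critical values of $f$ correspond to eigenvalues of $\hat c_1$, so assumptions (1) and (2) translate to the A-model Property $\mathcal{O}$; in particular Proposition \ref{propJexpansion} applies, giving the asymptotic \eqref{expandJ} with a well-defined principal class $A_X$.

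Next, I would identify which flat section corresponds to $\hat{\Gamma}_X$. Iritani's integral structure on the quantum D-module of a toric Fano manifold identifies the K-theory lattice of $X$ with the lattice generated by Lefschetz thimbles of $f$, sending the structure sheaf $\mathcal{O}_X$ to the thimble $\Gamma_{con}$ through the conifold point. Under this matching, the flat section attached to $\mathcal{O}_X$ is (up to a universal nonzero scalar) the one produced from $\hat{\Gamma}_X$ via $S(z) z^{-\mu} z^{c_1(X)}$. Therefore the leading asymptotic of $\int_{\Gamma_{con}} e^{-f/z}\,\tfrac{d\mathbf{z}}{\mathbf{z}}$ as $z \to 0^+$ encodes $\lim_{t \to +\infty} t^{\dim X/2} e^{-\rho t} J_X(t)$, with coefficient a nonzero multiple of $\hat{\Gamma}_X$.

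To finish, I would run the saddle-point analysis on $\Gamma_{con}$. Since $f|_{(\mathbb{R}_{>0})^n}$ attains its global minimum at $\mathbf{z}_{con}$, the Hessian of $f$ at this point is positive definite along the real positive locus, making $\Gamma_{con}$ a genuine Lefschetz thimble for real $z>0$ and ensuring $T_{con}\in \mathbb{R}_{>0}$. Laplace's method then yields an asymptotic of the form $C z^{n/2} e^{-T_{con}/z}\bigl(1 + O(z)\bigr)$; assumption (1) guarantees that every other thimble contributes terms with strictly smaller exponential weight, and assumption (2) prevents degeneracies at the dominant saddle. Substituting $t=1/z$ and combining with the identification of the previous paragraph yields $A_X \varpropto \hat{\Gamma}_X$, which is the assertion of Gamma conjecture I.

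The principal obstacle is the second step: pinning down the flat section attached to the thimble $\Gamma_{con}$ and verifying that it is precisely the $\hat{\Gamma}_X$-section, rather than some other element of the K-theoretic lattice under Iritani's isomorphism. This identification is essentially the content of the Hori--Vafa mirror theorem upgraded by the $\Gamma$-integral structure, and is typically established through an explicit Mellin--Barnes expansion of the oscillatory integral together with Givental's/Iritani's mirror theorem matching the $I$-function with the $J$-function. The saddle-point step, by contrast, is largely standard once the thimble is correctly pinned down.
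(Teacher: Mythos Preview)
The paper does not prove this proposition itself but cites it from \cite[Theorem 6.3]{GaIr}, noting only that the proof there ``uses the integral representation of the central charge''; your outline---mirror oscillatory integral over $(\mathbb{R}_{>0})^n$, Iritani's $\Gamma$-integral structure identifying it with the flat section attached to $\hat{\Gamma}_X$, and stationary phase at $\mathbf{z}_{con}$ under hypotheses (1)--(2)---is exactly that method. The paper carries out the identical argument explicitly for weighted projective spaces in the proof of Proposition~\ref{gammaconjIforweightedprojectivespaces} (see equation~\eqref{integralrepresentationbyiritani} and the two lines following it), confirming that your approach matches.
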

\noindent We remark that the proof \cite{GaIr} of the above proposition uses the integral representation of the   central charge (see \cite{Hoso} and references therein for the notion of central charge).

There have been lots of studies on mirror symmetry for toric del Pezzo surfaces, namely for $X_r$ with $1\leq r\leq 3$. The on-shelf Landau-Ginzburg potential $f_r$  mirror to $X_r$ can be read off for instance from \cite[Example 2.3]{Jer}. Precisely, we have
$$f_1= z_1+z_2+{1\over z_1}+{1\over z_1 z_2}; f_2= z_1+z_2+{1\over z_1}+{1\over z_1 z_2}+{1\over z_2}; f_3= z_1+z_2+{1\over z_1}+{1\over z_1 z_2}+{1\over z_2} +z_1 z_2.$$
Therefore, we could have been done by easily verifying that these functions satisfy the hypotheses in Proposition \ref{toricgammaconjI} due to Galkin and Iritani. Nevertheless, we can also restrict to the study   of quantum cohomology by using the following consequence.
  \begin{cor}\label{toricFanoindexone}
Let $X$ be an $n$-dimensional toric Fano manifold. If $\tspec(\hat c_1)\cap \mathbb{R}_{>0}=\{\rho\}$ and   the multiplicity of $\rho$ is one, then  Gamma conjecture I holds for $X$.
\end{cor}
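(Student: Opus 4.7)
The plan is to reduce the corollary to Proposition~\ref{toricgammaconjI} by rephrasing the spectral hypothesis on $\hat c_1$ as the B-model analogue of Property~$\mathcal{O}$ for the Landau--Ginzburg potential $f$ mirror to $X$. The crucial bridge is the mirror isomorphism of $\mathbb{C}$-algebras $QH^\bullet(X) \cong \mathrm{Jac}(f)$, which identifies the quantum multiplication operator $\hat c_1 = c_1(X)\star|_{\mathbf{q}=\mathbf{1}}$ with the operator of multiplication by $f$ on the Jacobian ring. This is standard for toric Fano manifolds.

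Using the decomposition $\mathrm{Jac}(f) = \bigoplus_{p \in \mathrm{Crit}(f)} \mathrm{Jac}_p(f)$ into local Artinian summands at the (isolated) critical points of $f$, the operator $f - f(p)$ is nilpotent on the $p$-th summand. Hence the eigenvalues of multiplication by $f$ on $\mathrm{Jac}(f)$ are exactly the critical values of $f$, with algebraic multiplicity equal to the total Milnor number of critical points sharing that value. Transporting this through the mirror isomorphism, the spectrum of $\hat c_1$ coincides, as a multiset, with $\{f(p) : p \in \mathrm{Crit}(f)\}$.

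The conifold point $\mathbf{z}_{\mathrm{con}} \in (\mathbb{R}_{>0})^n$ is a critical point of $f$ with positive real value $T_{\mathrm{con}} = f(\mathbf{z}_{\mathrm{con}})$, so $T_{\mathrm{con}} \in \mathrm{Spec}(\hat c_1) \cap \mathbb{R}_{>0}$. The hypothesis $\mathrm{Spec}(\hat c_1) \cap \mathbb{R}_{>0} = \{\rho\}$ therefore forces $T_{\mathrm{con}} = \rho$. Since $\rho$ is by definition the spectral radius of $\hat c_1$, every critical value $u$ of $f$ satisfies $|u| \leq \rho = T_{\mathrm{con}}$, which is condition~(1) of Proposition~\ref{toricgammaconjI}. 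The multiplicity-one assumption on $\rho$ translates, via the correspondence above, to the statement that the total Milnor number along $f^{-1}(T_{\mathrm{con}}) \cap \mathrm{Crit}(f)$ equals one; thus $\mathbf{z}_{\mathrm{con}}$ is the unique critical point in $f^{-1}(T_{\mathrm{con}})$ and is moreover nondegenerate, giving condition~(2). Proposition~\ref{toricgammaconjI} then delivers Gamma conjecture~I for $X$.

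The only step that requires care is the identification of eigenvalues with critical values with the correct multiplicities; this is where I would need to explicitly invoke (or recall a reference for) the mirror isomorphism sending $c_1$ to $f$, together with finite-dimensionality of $\mathrm{Jac}(f)$. Everything else is a formal translation, so the main obstacle is really bookkeeping: making sure ``multiplicity one in $\mathrm{Spec}(\hat c_1)$'' rules out several coinciding critical values as opposed to a single degenerate critical point with Milnor number one.
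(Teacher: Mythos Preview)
Your proposal is correct and follows essentially the same route as the paper: reduce to Proposition~\ref{toricgammaconjI} by identifying $\mathrm{Spec}(\hat c_1)$ with the multiset of critical values of $f$, observe that $T_{\mathrm{con}}$ is a positive real critical value and hence equals $\rho$, and then read off conditions~(1) and~(2) from the spectral-radius and multiplicity-one hypotheses. The only cosmetic difference is that the paper cites \cite{Auroux,Iri} directly for the eigenvalue/critical-value correspondence, whereas you unpack it via the Jacobian-ring decomposition; your final worry about a single degenerate critical point is harmless but unnecessary, since the conifold point is automatically a nondegenerate minimum of $f|_{(\mathbb{R}_{>0})^n}$ and condition~(2) only asks for uniqueness.
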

\begin{proof}
 It has been proved in \cite{Auroux,Iri} that   the set $\tspec(\hat c_1)$ of eigenvalues of   $\hat c_1$ coincides with the set of critical values of the   potential $f$ mirror to   $X$, and that the  multiplicities also coincide. (This is also a general expectation in mirror symmetry for   Fano manifolds.)     Since $\rho\in \tspec(\hat c_1)$, the condition (1) in Proposition \ref{toricgammaconjI} holds. Since $f|_{\mathbb{R}_{>0}^n}$ is a real function with positive real values and $f$ is holomorphic, any critical point $\mathbf{x}\in \mathbb{R}_{>0}^n$ of $f|_{\mathbb{R}_{>0}^n}$ is also a critical point of $f$ via the natural inclusion $\mathbb{R}_{>0}^n\subset (\mathbb{C}^*)^n$. Consequently, $f(\mathbf{x})\in \tspec(\hat c_1)\cap \mathbb{R}_{>0}$. It follows that $\rho=T_{con}$ and
 $f^{-1}(T_{con})$ is a single point set since the multiplicity of $\rho$ is one. Hence, the statement follows by Proposition \ref{toricgammaconjI}.
\end{proof}
By calculating the eigenvalues of the matrices $M_r (1\leq r\leq 3)$ in section 2.2, we can see that the hypotheses in the above corollary hold for toric del Pezzo surfaces, where we include the known cases $\mathbb{P}^1\times \mathbb{P}^1$ and $\mathbb{P}^2$. Hence, we have the following.
\begin{prop}\label{X123}
Toric del Pezzo surfaces satisfy Gamma conjecture I.
\end{prop}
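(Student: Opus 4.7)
The plan is to apply Corollary \ref{toricFanoindexone}. The toric del Pezzo surfaces are precisely $\mathbb{P}^2$, $\mathbb{P}^1\times \mathbb{P}^1$, and $X_r$ for $1\leq r\leq 3$. The first two are already known, as acknowledged in the introduction, so I would focus on $r\in\{1,2,3\}$. By Theorem \ref{thmconjO}, each such $X_r$ already satisfies Property $\mathcal{O}$; in particular $\rho=\rho(\hat c_1)$ is a simple eigenvalue of $\hat c_1$. What remains to verify in order to invoke Corollary \ref{toricFanoindexone} is the single condition
\[
\tspec(\hat c_1)\cap\mathbb{R}_{>0}=\{\rho\},
\]
i.e.\ that no other positive real number appears as an eigenvalue of $\hat c_1$.

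For each $r\in\{1,2,3\}$, I would use the explicit matrix $M_r$ displayed in Examples \ref{matX1}, \ref{matX2}, \ref{matX3} (of sizes $4$, $5$, $6$ respectively, with rational entries), compute its characteristic polynomial, factor it over $\mathbb{Q}$ as far as possible, and inspect the real roots. For $X_3$ the $S_3$-symmetry permuting $E_1,E_2,E_3$ simplifies things substantially: the $3\times 3$ sub-block on $\mathrm{span}(E_1,E_2,E_3)$ has row sums $-3$ (eigenvalue on $E_1+E_2+E_3$) and eigenvalue $-2$ of multiplicity two on the traceless subspace $\{\sum a_iE_i:\sum a_i=0\}$, so $(\lambda+2)^2$ factors out of the characteristic polynomial of $M_3$ and the remaining eigenvalue problem is only $4\times 4$. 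The raw sizes for $X_1$ and $X_2$ are comparably small.

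In each case I expect the characteristic polynomial to split, over $\mathbb{R}$, into linear factors corresponding to $\rho$ and to negative real roots, together with pairs of complex-conjugate quadratic factors; in particular the only positive real root is $\rho$. Once this is confirmed case-by-case, Corollary \ref{toricFanoindexone} immediately yields Gamma conjecture I for $X_1$, $X_2$, $X_3$, and combined with the known cases for $\mathbb{P}^2$ and $\mathbb{P}^1\times\mathbb{P}^1$ this completes the proof of Proposition \ref{X123}.

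The main obstacle is nothing conceptually subtle; it is purely the bookkeeping of factoring three small-degree characteristic polynomials with rational entries and confirming, e.g.\ by Sturm's theorem or by direct inspection of the factors, that no positive real root other than $\rho$ arises. A sanity check is available by comparing with the critical values of the mirror Landau--Ginzburg potentials $f_1,f_2,f_3$ displayed just before Corollary \ref{toricFanoindexone}, since these critical values coincide with $\tspec(\hat c_1)$ by the mirror correspondence cited in the proof of that corollary.
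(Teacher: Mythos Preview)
Your proposal is correct and follows essentially the same approach as the paper: apply Corollary~\ref{toricFanoindexone} after verifying, via the explicit matrices $M_r$ from Examples~\ref{matX1}--\ref{matX3}, that $\rho$ is the unique positive real eigenvalue of $\hat c_1$ and is simple. Your use of the $S_3$-symmetry on $X_3$ to factor off $(\lambda+2)^2$ is a nice simplification the paper does not spell out, but otherwise the argument is identical.
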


\subsection{Non-toric cases} \label{GammaconjectureIfornontoricdelPezzosurfaces}
Non-toric del Pezzo surfaces can be described as complete intersections in nice ambient spaces of Picard rank one. As proposed   in  \cite[Section 8]{GaIr}, we shall use  the quantum Lefschetz principle to prove Gamma conjecture I in these cases.

Let us start with the precise quantum Lefschetz principle in Proposition \ref{QLP} as well as its proof  following \cite[Theorem 8.3]{GaIr}. We give the details here since some additional work has to been done in the case of del Pezzo surfaces, which concerns about the notion of  primitive part as given below.

\begin{defn}\label{ambprim}
Let $\iota:Y\hookrightarrow X$ be an embedding of a smooth projective variety $Y$ as a hypersurface into a (possibly singular) projective variety $X$. Then with respect to $\iota$, the \textbf{ambient part} of $H^\bullet(Y)$ is $\iota^*H^\bullet(X)$, and the \textbf{primitive part} of $H^\bullet(Y)$ is the orthogonal complement of $\iota^*H^\bullet(X)$ in $H^\bullet(Y)$ with respect to the Poincar\'e pairing.
\end{defn}

\begin{prop}\label{QLP}
Let $X$ be a Fano manifold of index $r_X\geq2$ and write $-K_X=r_Xh$. Let $\iota:Y\hookrightarrow X$ be a Fano hypersurface in the linear system $|ah|$ with $0<a<r$. Assume that
\ban\label{limitformulaforX}
\hat\Gamma_X\varpropto\lim\limits_{t\rightarrow+\infty}t^{\frac{\dim X}2}e^{-C_Xt}J_X(t)
\nan
for some constant $C_X$, and that the primitive part of $J_Y(t)$ vanishes. Then
\ba\label{limitformulaforY}
\hat\Gamma_Y\varpropto\lim\limits_{t\rightarrow+\infty}t^{\frac{\dim Y}2}e^{-C_Yt}J_Y(t)
\na
for some constant $C_Y$. In particular if $Y$ satisfies Property $\mathcal{O}$, then $Y$ satisfies Gamma conjecture I.
\end{prop}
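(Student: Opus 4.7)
The plan is to follow the quantum Lefschetz strategy of Galkin--Iritani \cite[Theorem 8.3]{GaIr}. Informally, quantum Lefschetz will express the ambient part of $J_Y$ as a hypergeometric twist of $\iota^* J_X$, while adjunction gives $\hat\Gamma_Y \cdot \iota^*\Gamma(1+ah) = \iota^*\hat\Gamma_X$; taking the $t\to+\infty$ asymptotic of the twist and matching the $\Gamma$-factors will produce the desired proportionality for $Y$.

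First I would invoke small quantum Lefschetz for the embedding $\iota:Y\hookrightarrow X$ cut out by a section of $\mathcal{O}_X(ah)$. Because of the index bound $a<r_X$, no mirror map correction appears and the ambient part $J_Y^{\mathrm{amb}}$ of $J_Y$ is produced from $\iota^*J_X$ by the usual hypergeometric modification; schematically,
\begin{equation*}
J_Y^{\mathrm{amb}}(t) \;\varpropto\; \iota^*\!\!\sum_{d \in H_2(X,\mathbb{Z})}\bigg[\prod_{k=1}^{\int_d ah}(ah+k)\bigg]\, t^{-\int_d ah}\, J_{X,d}(t),
\end{equation*}
where $J_X(t)=\sum_d t^{\int_d c_1(X)}J_{X,d}$ is the degree decomposition of $J_X$. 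The hypothesis that the primitive part of $J_Y$ vanishes then promotes this identity to one for all of $J_Y(t)$, so the asymptotics of $J_Y$ are entirely controlled by those of $\iota^*J_X$ together with the hypergeometric weight.

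Second, I would rewrite $\prod_{k=1}^{\int_d ah}(ah+k)=\Gamma(1+ah+\textstyle\int_d ah)/\Gamma(1+ah)$, substitute the hypothesized asymptotic $\hat\Gamma_X\varpropto\lim_{t\to+\infty}t^{\dim X/2}e^{-C_Xt}J_X(t)$, and perform a Stirling / saddle-point analysis as $t\to+\infty$. The factor $\Gamma(1+ah)^{-1}$ survives as a cohomological coefficient, while the Stirling asymptotic of $\Gamma(1+ah+\int_d ah)$ resums with $t^{-\int_d ah}e^{-C_Xt}$ to produce a shifted exponential $e^{C_Y t}$ and the corrected power of $t$ that converts $t^{\dim X/2}$ into $t^{\dim Y/2}=t^{(\dim X-1)/2}$. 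Combined with the adjunction identity $\hat\Gamma_Y = \iota^*\hat\Gamma_X/\Gamma(1+ah)|_Y$, coming from $0\to TY\to\iota^*TX\to\mathcal{O}_Y(ah)\to 0$, this gives exactly $\hat\Gamma_Y\varpropto\lim_{t\to+\infty}t^{\dim Y/2}e^{-C_Yt}J_Y(t)$. The last sentence of the proposition is then automatic: once this limit exists and is proportional to $\hat\Gamma_Y$, Proposition \ref{propJexpansion} identifies it with the principal asymptotic class $A_Y$ up to scalar whenever $Y$ satisfies Property $\mathcal O$.

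The main technical obstacle is the interchange of the Stirling asymptotic with the infinite sum over degrees, which is the classical pitfall of such Laplace-type arguments. The cleanest route, as in \cite{GaIr,GGI}, is to translate quantum Lefschetz into a Mellin--Barnes / residue identity between the central charges $\langle[\mathcal O_X],J_X(t)\rangle$ and $\langle[\mathcal O_Y],J_Y(t)\rangle$; the $t\to+\infty$ asymptotic becomes a bona fide steepest-descent computation whose convergence is well-behaved, with the $Y$-saddle (yielding $C_Y$) arising as the image of the $X$-saddle under the Lefschetz residue. For the del Pezzo applications in \S\ref{GammaconjectureIfornontoricdelPezzosurfaces}, the real labour will lie in verifying the two hypotheses --- the index bound $a<r_X$ and the vanishing of the primitive part of $J_Y$ --- for each of the complete-intersection realizations listed in \S\ref{geometricinterpretation}; the vanishing of the primitive part is where Theorem \ref{vanishingproperty} is expected to play its role.
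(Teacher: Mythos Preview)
Your overall strategy---quantum Lefschetz to express $J_Y$ as a hypergeometric twist of $\iota^*J_X$, the adjunction identity $\hat\Gamma_Y=\iota^*\hat\Gamma_X/\Gamma(1+ah)$, and an asymptotic analysis as $t\to+\infty$---matches the paper's, and you correctly locate the role of the primitive-part hypothesis. The one substantive divergence is in how the asymptotic step is executed.

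The paper does \emph{not} do term-by-term Stirling and then justify an interchange of limits, nor does it pass to a Mellin--Barnes contour for central charges. Instead it substitutes the Euler integral $\Gamma(1+ah+an)=\int_0^\infty e^{-q}q^{ah+an}\,dq$ into the quantum Lefschetz expression for $J_Y$, so that the sum over degrees reassembles into $J_X$ evaluated at a rescaled argument. After the obvious change of variables this packages the whole of $t^{\dim Y/2}e^{-C_Yt}J_Y(t)$ as a single Laplace-type integral
\[
\frac{\sqrt t}{\Gamma(1+ah)}\int_0^\infty q^{-\frac{a\dim X}{2r_X}}\,e^{-(q-C_Xq^{a/r_X}+T_0)t}\,\tilde J\bigl(tq^{a/r_X}\bigr)\,dq,
\]
where $\tilde J(t)=t^{\dim X/2}e^{-C_Xt}J_X(t)$ has, by hypothesis, a finite limit proportional to $\hat\Gamma_X$, and $T_0=\max_{q\ge 0}\{-q+C_Xq^{a/r_X}\}$, $C_Y=T_0-C_0$. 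One stationary-phase estimate on this integral then gives the result directly; the interchange-of-limits obstacle you flag never arises because the infinite sum has already been absorbed into $\tilde J$. This device is more elementary than the Mellin--Barnes route you sketch and is worth internalizing, since it is reused verbatim in the weighted-projective case (Proposition~\ref{QLPinweightedprojectivespace}).
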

\begin{proof}
Write
$J_X(t)=e^{r_Xh\log t}\suml_{n=0}^\infty J_{r_Xn}t^{r_Xn},\textrm{ with }J_{r_Xn}\in H^\bullet(X).$

Since the primitive part of $J_Y(t)$ vanishes, $J_Y(t)$ takes values in $\iota^*H^\bullet(X)$. It follows from the quantum Lefschetz principle \cite{Lee, CoGi} that
\ba\label{QLPforY}
J_Y(t)=e^{(r_X-a)h\log t-C_0t}\suml_{n=0}^\infty\frac{\Gamma(1+ah+an)}{\Gamma(1+ah)}(\iota^*J_{r_Xn})t^{(r_X-a)n},
\na
where $C_0$ is some constant determined by $Y$. Set
$$
\tilde J(t)=t^{\frac{\dim X}2}e^{-C_Xt}J_X(t),\quad T_0=\max\limits_{q\geq0}\{-q+C_Xq^{{a\over r_X}}\},\quad\mbox{and}\quad
C_Y=T_0-C_0.
$$
We have
\ban
t^{\frac{\dim Y}2}e^{-C_Yt}J_Y(t)=\frac{\sqrt t}{\Gamma(1+ah)}\int_0^\infty q^{-\frac{a\dim X}{2r_X}}e^{-(q-C_Xq^{ a\over r_X}+T_0)t}\tilde J(tq^{a\over r_X})dq.
\nan
Using the stationary phase approximation, we conclude  that RHS of \eqref{limitformulaforY} is proportional to $\frac{\iota^*\hat\Gamma_X}{\Gamma(1+ah)}$. Now the first required result follows from the  equality
$
\hat\Gamma_Y=\frac{\iota^*\hat\Gamma_X}{\Gamma(1+ah)},
$ as obtained from the adjunction formula.

If $Y$ satisfies Property $\mathcal{O}$, then  $Y$   satisfies Gamma conjecture I by Proposition \ref{propJexpansion}.
\end{proof}
\noindent We remark that the above proof is more like an outline, and refer to \cite[Theorem 8.3]{GaIr} for the details of  estimations on  the asymptotics skipped here. The assumption on $X$ above is  slightly weaker than the requirement of $X$ satisfying Gamma conjecture I.
 The assumption on the vanishing of the primitive part of $J_Y(t)$, which was  not explicitly mentioned in \cite[Section 8]{GaIr}, can be guaranteed
 whenever $\dim Y\geq 3$. This is due to the next property following from (the proof of) \cite[Lemma 1]{LePa}.
\begin{lemma}\label{primitivevanishing}
Let $X$ be a Fano manifold of index $r\geq2$. Let $Y$ be a Fano hypersurface of $X$  with $c_1(Y)\varpropto -K_X$.  If $\dim Y\geq3$, then the primitive part of $J_Y(t)$ vanishes.
\end{lemma}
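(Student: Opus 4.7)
The plan is to reduce the vanishing of the primitive part of $J_Y(t)$ to the vanishing of one-point descendant Gromov--Witten invariants of $Y$ with primitive insertions, and then to establish the latter via a monodromy argument on a Lefschetz pencil. Let $\mbox{Prim}(Y)$ denote the primitive part of $H^\bullet(Y)$ in the sense of Definition \ref{ambprim}.

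First, starting from the expansion of $J_Y(t)$ recalled in the introduction,
\ban
J_Y(t)=e^{c_1(Y)\log t}\Big(\mathbbm{1}+\suml_{i=1}^N\suml_{d\neq 0}\Big\<\frac{\phi_i}{1-c_1(\mathcal{L}_1)}\Big\>_{0,d}^Y\phi^i\,t^{\int_d c_1(Y)}\Big),
\nan
I would choose a homogeneous basis $\{\phi_i\}$ of $H^\bullet(Y)$ adapted to the orthogonal decomposition $H^\bullet(Y)=\iota^*H^\bullet(X)\oplus\mbox{Prim}(Y)$. Since the Poincar\'e pairing respects this splitting, the dual basis $\{\phi^i\}$ does too. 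The primitive part of $J_Y(t)$ is therefore the sum of the terms for which $\phi_i\in\mbox{Prim}(Y)$, and its vanishing is equivalent to
\ban
\<\tau_k(\phi)\>^Y_{0,d}=0\quad\text{for every }\phi\in\mbox{Prim}(Y),\ k\geq 0,\ d\neq 0.
\nan

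Next, I would use the Lefschetz hyperplane theorem: since $\dim Y\geq 3$, the restriction $\iota^*\colon H^j(X,\bQ)\to H^j(Y,\bQ)$ is an isomorphism for $j<\dim Y$; combined with Poincar\'e duality on $X$ and $Y$, this forces $\mbox{Prim}(Y)$ to be concentrated in the middle degree $\dim Y$. I would then embed $Y$ into a Lefschetz pencil of members of $|ah|$, replacing $h$ by a sufficiently positive multiple if necessary to guarantee very ampleness and the existence of such a pencil with only nodal singular fibers. Over the base complement $B\subset\bP^1$ of the discriminant, one obtains a smooth proper family $\mathcal{Y}\to B$ of Fano hypersurfaces containing $Y$ as one fiber.

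The key input is the classical Lefschetz--Deligne irreducibility theorem: the monodromy representation of $\pi_1(B)$ on $\mbox{Prim}(Y,\bQ)$ is irreducible and nontrivial, being generated by vanishing cycles forming a single $\pi_1(B)$-orbit up to sign. On the other hand, by deformation invariance of descendant Gromov--Witten invariants in smooth proper families, the linear functional
\ban
\mbox{Prim}(Y)\longrightarrow\bQ,\qquad\phi\longmapsto\<\tau_k(\phi)\>_{0,d}^Y,
\nan
is monodromy-equivariant, where $\bQ$ carries the trivial action. Any equivariant map from an irreducible nontrivial representation into the trivial one must vanish, yielding the required vanishing. The main obstacle I expect is purely technical: verifying the existence of a Lefschetz pencil in $|ah|$ through $Y$ with only mild singularities, and confirming that deformation invariance of one-point descendant invariants applies fiberwise to this family. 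Both are standard for sufficiently positive linear systems on smooth projective varieties and are precisely what is carried out in \cite[Lemma 1]{LePa}.
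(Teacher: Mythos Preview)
Your proposal is correct and follows exactly the route the paper indicates: the paper gives no self-contained proof of this lemma but simply attributes it to (the proof of) \cite[Lemma~1]{LePa}, and what you have written is precisely the monodromy argument carried out there --- reduce to one-point descendant invariants with a primitive insertion, observe via Lefschetz that the primitive cohomology sits in middle degree, and use deformation invariance together with irreducibility of the monodromy on vanishing cohomology to kill the resulting invariant functional. One small imprecision: you cannot literally ``replace $h$ by a multiple'' since $Y$ is fixed in $|ah|$, but what you need is only that the monodromy of the universal smooth family in $|ah|$ has no nonzero invariants on $\mathrm{Prim}(Y)$, which is exactly what \cite{LePa} supplies.
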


\subsubsection{Case $X_r (4\leq r\leq6)$}


\begin{prop}\label{X456}
For each $4\leq r\leq 6$, $X_r$ satisfies Gamma conjecture I.
\end{prop}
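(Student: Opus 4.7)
The plan is to apply the quantum Lefschetz principle of Proposition~\ref{QLP} inductively, taking as initial ambient space a Fano manifold for which Gamma conjecture~I is already known. Using the geometric descriptions in Section~\ref{geometricinterpretation}, I would realize each $X_r$ as an iterated Fano hypersurface chain
\[
Gr(2,5)\supset Y_1\supset Y_2\supset Y_3\supset X_4,\quad \bP^4\supset Y\supset X_5,\quad \bP^3\supset X_6,
\]
where each inclusion is a generic hypersurface section (hyperplanes for $X_4$, quadrics for $X_5$, and a cubic for $X_6$). A short index computation shows that at every intermediate step the Fano index of the ambient strictly exceeds the degree $a$ of the defining divisor, so the numerical hypothesis $0<a<r_X$ of Proposition~\ref{QLP} is satisfied throughout.

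Gamma conjecture~I is known for $Gr(2,5)$ by~\cite{GGI} and for projective spaces by Proposition~\ref{X123}, so at the top of each chain the limit formula~\eqref{limitformulaforX} assumed in Proposition~\ref{QLP} is available. I would then propagate this formula one step at a time down each chain via Proposition~\ref{QLP}. At the bottom of each chain, Property~$\mathcal{O}$ for $X_r$ is provided by Theorem~\ref{thmconjO}, so the conclusion of Proposition~\ref{QLP} automatically upgrades the inherited limit formula~\eqref{limitformulaforY} to Gamma conjecture~I.

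The only non-automatic ingredient is to verify, at each step, the vanishing of the primitive part of the $J$-function of the corresponding hypersurface. For every intermediate step in our chains the hypersurface has dimension at least~$3$, so Lemma~\ref{primitivevanishing} supplies this vanishing immediately. The main obstacle is the very last step of each chain, where the hypersurface is $X_r$ itself, of dimension~$2$, and Lemma~\ref{primitivevanishing} no longer applies. To handle this step I would first check, by a direct computation using that the top Schubert classes of $Gr(2,5)$ and the top powers of the hyperplane class of $\bP^n$ all restrict to multiples of $[pt]$ and that $c_1(X_r)$ is proportional to the restriction of the ambient hyperplane class, that the iterated ambient subspace $\iota^*H^\bullet(\text{ambient})\subseteq H^\bullet(X_r)$ coincides with $\mbox{Amb}_r$, so that the primitive part in the sense of Definition~\ref{ambprim} is exactly $\mbox{Prim}_r$. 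Expanding $1/(1-c_1(\mathcal{L}_1))=\sum_{a\ge 0}c_1(\mathcal{L}_1)^a$ in the definition of $J_{X_r}(t)$ and noting that $e^{c_1(X_r)\log t}$ acts as the identity on $\mbox{Prim}_r$, the projection of $J_{X_r}(t)$ onto $\mbox{Prim}_r$ is a finite sum of terms of the shape $\sum_{A}\<\tau_a(\gamma)\>_A\,\gamma^\vee\,t^{\int_A c_1}$ with $\gamma\in\mbox{Prim}_r$, and Theorem~\ref{vanishingproperty} applied with $m=0$ forces each such sum to vanish. This closes the last step and gives Gamma conjecture~I for each of $X_4$, $X_5$ and $X_6$.
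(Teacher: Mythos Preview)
Your proposal is correct and follows essentially the same approach as the paper: realize $X_r$ as an iterated Fano hypersurface, use Lemma~\ref{primitivevanishing} at the intermediate steps of dimension $\geq 3$ and Theorem~\ref{vanishingproperty} at the final surface step, then apply Proposition~\ref{QLP} inductively and conclude via Property~$\cO$ (Theorem~\ref{thmconjO}). The only quibble is a citation: Proposition~\ref{X123} covers only toric del Pezzo \emph{surfaces}, so for $\bP^3$ and $\bP^4$ you should instead cite \cite{GGI} (they are Grassmannians $Gr(1,n{+}1)$) or Corollary~\ref{toricFanoindexone}.
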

\begin{proof}

For $r=4$, we consider the embedding of $X_4$ as complete intersections in complex Grassmannian $Gr(2,5)$ \cite{Cord}. More precisely,
\ban
X_4=Gr(2,5)\cap H_1\cap H_2\cap H_3\cap H_4,
\nan
where $Gr(2,5)$ is viewed as a subvariety in  $\bP^9$ via the Pl\"ucker embedding and  $H_i$'s are     hyperplanes in $\bP^9$ in general position, so that the above intersection  makes sense. Denote
\ban
Y_k:=Gr(2,5)\bigcap\bigcap\limits_{i=1}^kH_i,\quad0\leq k\leq4.
\nan
 With respect to the embedding $Y_k\subset Y_{k-1}$, the primitive part of $J_{Y_k}(t)(1\leq k\leq3)$ vanishes by Lemma \ref{primitivevanishing}, and the primitive part of $J_{Y_4}(t)$ vanishes by Theorem \ref{vanishingproperty}. The Picard rank of $Y_0=Gr(2, 5)$ equals one, and $Y_0$ satisfies  Gamma conjecture I by \cite[Theorem 6.1.1]{GGI}.
 Thus $Y_0$ satisfies the hypotheses on the ambient space in Proposition \ref{QLP}. By applying Proposition \ref{QLP} and using induction on $k$,  we conclude that for any $1\leq k\leq 4$,
\ban
\hat\Gamma_{Y_k}\varpropto\lim\limits_{t\rightarrow+\infty}t^{\frac{\dim Y_k}2}e^{-C_{Y_k}t}J_{Y_k}(t),\textrm{ for some constant }C_{Y_k}.
\nan
Since $Y_4=X_4$ satisfies Property $\cO$, it follows   that $X_4$ satisfies Gamma conjecture I.

The arguments for cases $r=5$ and $r=6$ are similar.
\end{proof}

\subsubsection{Case $X_r (7\leq r\leq8)$}

The weighted projective space $\mathscr X:=\bP(1,w_1,\cdots,w_N)$ is the quotient stack $[(\bC^{N+1}\setminus\{0\})/\bC^*]$  with $\bC^*$-action   with weights $-1,-w_1,\cdots,-w_N$. The del Pezzo surfaces $X_7, X_8$ are   smooth hypersurfaces in $\mathcal X$  respectively in the special cases $(w_1, w_2, w_3)=(1,1,2)$ and $(1,2,3)$.
In order to show Gamma conjecture I for $X_7, X_8$, we will first study that for   $\mathscr X$, and then apply the corresponding  quantum Lefschetz principle. We refer our readers to \cite{CCLT, Iri} for basic materials of orbifold Gromov-Witten theory  of weighted projective spaces.

To ease notations, in the rest of this section, we denote
$$
r_{\mathscr X}:=1+w_1+\cdots+w_N,\quad c=r_{\mathscr X}(\prodl_{i=1}^Nw_i^{-w_i})^{1\over r_{\mathscr X}}\quad\mbox{and}\quad  h=c_1(\cO(1))\in H^\bullet(\mathscr X).$$
Notice that the first Chern class of $\mathscr X$ is given by $c_1(\mathscr X)=r_{\mathscr X} h$.

Following \cite[section 1]{CCLT}, we let $F:=\{\frac k{w_i}|1\leq i\leq N,0\leq k<w_i\}$. For $f\in F$, we let $\mathscr X_f$ be the locus of points of $\mathscr X$ with isotropic group containing $e^{2\pi\sqrt{-1}f}$.
The Chen-Ruan orbifold cohomology group of $\mathscr X$ (denoted also by $H_{\rm CR}^\bullet(\mathscr X)$) is given by
\ban
H^\bullet_{orb}(\mathscr X):=H^\bullet(I\mathscr X)=H^\bullet(\mathscr X)\oplus\bigoplus_{f\in F\setminus\{0\}}H^\bullet(\mathscr X_f), \mbox{ where } I\mathscr X:=\bigsqcup_{f\in F}\mathscr X_f.
\nan
Here $I\mathscr X$ is called the inertia stack of $\mathscr X$, and we notice $\mathscr X_0=\mathscr X$. The bullet '$\bullet$' means that we only consider classes with even topological degree.

The quantum connection $\nabla$ on the trivial $H^\bullet_{orb}(\mathscr X)$-bundle over $\bP^1$ is given by
\ban
\nabla_{z\partial_z}=z\partial_z-\frac1z(c_1(\mathscr X)\bullet)+\mu,
\nan
where $\bullet$ is the orbifold small quantum product of $\mathscr X$ with all Novikov variables setting to $1$, and $\mu$ is the Hodge grading operator respecting the Chen-Ruan degree of classes in $H^\bullet_{orb}(\mathscr X)$.  The quantum connection is a meromorphic connection, which is logarithmic at $z=\infty$ and irregular at $z=0$. Consider the holomorphic function
\ban
S: \mathbb{P}^1\setminus\{0\}&\to&\mbox{End}(H^\bullet_{orb}(\mathscr X))
\nan
defined by
\ban
(S(z)(\alpha),\beta)^{orb}_\mathscr X=(\alpha,\beta)^{orb}_\mathscr X+\suml_{m\geq0}\frac1{z^{m+1}}\suml_{d}(-1)^{m+1}\<\alpha\psi^m,\beta\>^\mathscr X_d.
\nan
Then from \cite[Section 2.3]{Iri}, the space of flat sections can be identified with the Chen-Ruan orbifold cohomology group $H^\bullet_{orb}(\mathscr X)$ via the fundamental solution $S(z)z^{-\mu}z^{c_1(\mathscr X)}$. In particular,  we have the following isomorphism to the space of $\nabla$-flat sections over $\bR_{>0}$.
\ban
\Phi:H^\bullet_{orb}(\mathscr X)&\xrightarrow{\cong}&\{s:\bR_{>0}\to H^\bullet_{orb}(\mathscr X):\nabla s=0\};\\
\alpha&\mapsto&(2\pi)^{-\frac N2}S(z)z^{-\mu}z^{c_1(\mathscr X)}\alpha.
\nan

It follows from \cite[Theorem 1.1]{CCLT}  that the characteristic polynomial of $(c_1(\mathscr X)\bullet)$ is $\lambda^r-c^r$. By parallel discussions to that in \cite[Section 3.2]{GGI}, we have

\begin{prop}[Analogue to Proposition 3.3.1 of \cite{GGI}]\label{smllestasymptotic}
Let
\begin{align*}
   \mathscr A :=&\,\,\{s:\bR_{>0}\to H^\bullet_{orb}(\mathscr X)|\nabla s=0,
 ||e^{\frac cz}s(z)||=O(z^{m})\textrm{ as }z\to+0 \mbox{ for some }m\in\bZ_{\geq0}\},\\
E_c :=&\,\,\textrm{eigenspace of }(c_1(\mathscr X)\bullet)\textrm{ in }H_{orb}^\bullet(\mathscr X)\textrm{ with eigenvalue }c.
\end{align*}
Then both $\mathscr A$ and $E_c$ are one-dimensional complex vector spaces, and they are isomorphic to each other via the map
${\displaystyle \mathscr A\to E_c\mbox{ defined by }s \mapsto \lim\limits_{z\to+0}e^{\frac cz}s(z)}$
\end{prop}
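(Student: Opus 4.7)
\medskip
\noindent\textbf{Proof proposal.} The plan is to mimic the strategy of \cite[Proposition 3.3.1]{GGI}, replacing the inputs used there for smooth Fano manifolds by their orbifold counterparts for the weighted projective space $\mathscr X$. There are two claims to establish: that $E_c$ is one-dimensional, that $\mathscr A$ is one-dimensional, and that the limit map $s\mapsto \lim_{z\to +0}e^{c/z}s(z)$ realizes an isomorphism between them. The first of these is essentially free from \cite[Theorem 1.1]{CCLT}: the characteristic polynomial of $c_1(\mathscr X)\bullet$ equals $\lambda^{r_{\mathscr X}}-c^{r_{\mathscr X}}$, so its eigenvalues are the distinct numbers $c\zeta^k$ with $\zeta=e^{2\pi\sqrt{-1}/r_{\mathscr X}}$ and $0\le k<r_{\mathscr X}$. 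Since $\dim H_{\rm orb}^\bullet(\mathscr X)=r_{\mathscr X}$, every eigenvalue is simple, and in particular $\dim E_c=1$ with a distinguished eigenvector, call it $\psi_c$.

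For the remaining claims, the idea is to transport the asymptotic analysis through the fundamental solution. Using the isomorphism $\Phi$, any flat section over $\bR_{>0}$ is of the form $s(z)=\Phi(\alpha)(z)$ for a unique $\alpha\in H^\bullet_{\rm orb}(\mathscr X)$. At the irregular singularity $z=0$, the Hukuhara--Levelt--Turrittin theorem yields a formal decomposition of the quantum connection into blocks indexed by the eigenvalues of $c_1(\mathscr X)\bullet$, and Sibuya's sectorial existence theorem then produces an analytic basis $\{s_k(z)\}_{0\le k<r_{\mathscr X}}$ of flat sections with prescribed asymptotics
\[
s_k(z)\sim e^{-c\zeta^k/z}z^{m_k}\bigl(\psi_{c\zeta^k}+O(z)\bigr),\qquad z\to +0,
\]
where $\psi_{c\zeta^k}$ is an eigenvector with eigenvalue $c\zeta^k$ and $m_k\in\bZ_{\ge 0}$ is determined by the grading operator $\mu$. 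These facts depend only on the meromorphic connection, not on the smoothness of the underlying space, so they apply verbatim to the orbifold setting once one knows that the small orbifold quantum product converges near $q=1$, which is already provided by \cite{CCLT}.

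With the asymptotic basis in hand, the selection mechanism is straightforward. On $\bR_{>0}$ one has $|e^{-c\zeta^k/z}|=e^{-(\cos(2\pi k/r_{\mathscr X}))c/z}$, and this is dominated by $e^{-c/z}$ precisely when $k=0$; equivalently, after multiplying by $e^{c/z}$, the section $s_0$ remains of polynomial growth while $s_k$ $(k\neq 0)$ blows up exponentially. Consequently, a flat section $s=\sum_k \lambda_k s_k$ belongs to $\mathscr A$ if and only if $\lambda_k=0$ for all $k\neq 0$, forcing $\dim\mathscr A=1$ with $\mathscr A=\mathbb{C}\cdot s_0$. The explicit asymptotic for $s_0$ then gives $\lim_{z\to +0}e^{c/z}s(z)=\lambda_0\psi_c$ for $s=\lambda_0 s_0\in\mathscr A$, which is a nonzero element of $E_c$ when $\lambda_0\neq 0$. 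Hence the limit map is a well-defined linear map between two one-dimensional spaces that is nonzero, so it is an isomorphism.

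The main obstacle, as I see it, is not the formal skeleton of the argument---which is a transcription of \cite{GGI}---but verifying the analytic hypotheses in the orbifold setting: that the small quantum connection of $\mathscr X$ is a genuine meromorphic connection with an irregular singularity at $z=0$ whose leading term is exactly $-z^{-2}(c_1(\mathscr X)\bullet)$, so that the eigenvalues of the leading operator govern the Stokes structure, and that the convergence radius of the small orbifold quantum product covers $q=1$. Both are handled by \cite[Theorem 1.1]{CCLT} and the accompanying analysis of \cite{Iri} for weighted projective spaces, after which the sectorial asymptotics for $S(z)z^{-\mu}z^{c_1(\mathscr X)}$ on $\bR_{>0}$ follow from the standard theory.
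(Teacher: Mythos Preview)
Your proposal is correct and follows essentially the same route as the paper. The paper does not spell out a proof of this proposition at all: it simply states ``By parallel discussions to that in \cite[Section 3.2]{GGI}, we have'' the result, deferring entirely to the argument for smooth Fano manifolds. What you have written is exactly that parallel discussion made explicit --- using \cite[Theorem 1.1]{CCLT} for the simplicity of the eigenvalues of $c_1(\mathscr X)\bullet$, and then the standard Hukuhara--Levelt--Turrittin/Sibuya package for the sectorial asymptotic basis at the irregular singularity $z=0$ --- so there is nothing to compare.

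One minor remark: in your asymptotic $s_k(z)\sim e^{-c\zeta^k/z}z^{m_k}(\psi_{c\zeta^k}+O(z))$, the simplicity of each eigenvalue forces $m_k=0$, so the limit $\lim_{z\to+0}e^{c/z}s_0(z)$ really lands on a \emph{nonzero} eigenvector; you use this implicitly when concluding the map is nonzero, so it may be worth saying so.
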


\begin{defn}
A \textbf{principal asymptotic class} of $\mathscr X$ is a nonzero class $A_\mathscr X\in H_{orb}^\bullet(\mathscr X)$ such that $\Phi(A_\mathscr X)\in\mathscr A$.
\end{defn}

By Proposition \ref{smllestasymptotic}, the principal asymptotic class  of $\mathscr X$ is unique up to a nonzero scalar. Actually, it is  given  by the Gamma class of $\mathscr X$ as  below. The Gamma class  could be defined for an almost complex orbifold,  and lives in the orbifold cohomology group \cite[(23)]{Iri}. The Gamma class of $\mathscr X$ is of the form
\ban
\hat\Gamma_\mathscr X=\Gamma(1+h)\prodl_{i=1}^N\Gamma(1+w_ih)+\textrm{ terms in }\bigoplus_{f\in F\setminus\{0\}}H^\bullet(\mathscr X_f).
\nan
The following  can be viewed as one (equivalent) version of Gamma conjecture I for $\mathscr X$.
\begin{prop}\label{gammaconjIforweightedprojectivespaces}
$\hat\Gamma_\mathscr X$ is a principal asymptotic class of $\mathscr X$.
\end{prop}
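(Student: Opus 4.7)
The plan is to adapt the mirror-theoretic strategy of Galkin-Iritani \cite{GaIr} to this orbifold setting, using the explicit Coates-Corti-Lee-Tseng \cite{CCLT} formula for the small $J$-function of $\mathscr X$. By the definition of $\mathscr A$ and $\Phi$, the proposition reduces to showing that
\ban
s(z) := (2\pi)^{-N/2} S(z) z^{-\mu} z^{c_1(\mathscr X)} \hat\Gamma_\mathscr X
\nan
satisfies $\|e^{c/z} s(z)\| = O(z^m)$ as $z \to 0^+$ along $\bR_{>0}$, for some $m \in \bZ_{\geq 0}$; granting this, Proposition \ref{smllestasymptotic} immediately identifies $\hat\Gamma_\mathscr X$ as a principal asymptotic class.

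First, I would use the CCLT formula for the twisted $J$-function of $\mathscr X$, expressed as a generalized hypergeometric series with coefficients that are ratios of $\Gamma$-factors in $h$, with one contribution per twisted sector $\mathscr X_f$, $f \in F$. Combining this with the orbifold Gamma class $\hat\Gamma_\mathscr X$ (whose formula is given in \cite[(23)]{Iri}) and applying the integral identity $\Gamma(1 + ah) = \int_0^\infty e^{-x} x^{ah}\, dx/x$ to each factor of the form $\Gamma(1 + w_i h)$ (together with the analogous identity for the factor $\Gamma(1+h)$) should convert $s(z)$ into the oscillatory-integral form
\ban
s(z) \,=\, \frac{1}{(2\pi)^{N/2}} \int_{\mathcal{C}} e^{-f(\mathbf{x})/z}\, \omega(\mathbf{x}),
\nan
where $f$ is the Laurent-polynomial Landau-Ginzburg potential mirror to $\mathscr X$, $\omega$ is the standard logarithmic volume form, and $\mathcal{C} = (\bR_{>0})^N$ is the positive real Lefschetz thimble through the conifold point of $f$.

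Second, I would apply Laplace's method to this integral as $z \to 0^+$. By \cite[Theorem 1.1]{CCLT}, the characteristic polynomial of $(c_1(\mathscr X)\bullet)$ is $\lambda^{r_\mathscr X} - c^{r_\mathscr X}$, so $c$ is the unique critical value of $f$ lying on $\bR_{>0}$ and coincides with the conifold value $f(\mathbf{x}_{\mathrm{con}})$; moreover the saddle $\mathbf{x}_{\mathrm{con}}$ is nondegenerate. Laplace's method then produces an asymptotic expansion $s(z) \sim e^{-c/z}(v_0 + v_1 z + v_2 z^2 + \cdots)$ with $v_0 \neq 0$, and in particular $\|e^{c/z} s(z)\| = O(1)$, which gives the required decay.

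The main obstacle I foresee is the orbifold bookkeeping in the first step: matching the twisted-sector components of $\hat\Gamma_\mathscr X$, which by \cite[(23)]{Iri} involve fractional values of $\Gamma$ indexed by $f = k/w_i \in F$, with the sector-by-sector pieces of the CCLT hypergeometric expansion. This matching will rely on the reflection and multiplication identities for $\Gamma$ applied to each fractional contribution, and needs care because the exponents $w_i h$ are not integers in each twisted sector. Once the oscillatory-integral representation is rigorously established, the remainder is the routine Laplace-method computation as in \cite{GaIr}.
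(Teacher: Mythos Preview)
Your approach is essentially the same as the paper's: obtain an oscillatory-integral representation of the flat section $\Phi(\hat\Gamma_{\mathscr X})$ over the positive real Lefschetz thimble for the Laurent polynomial $f(x)=x_1+\cdots+x_N+x_1^{-w_1}\cdots x_N^{-w_N}$, then apply stationary phase at the conifold point to get the $O(1)$ bound on $e^{c/z}\Phi(\hat\Gamma_{\mathscr X})(z)$.

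The one substantive difference is in how the oscillatory-integral representation is obtained. You propose to derive it by hand from the CCLT hypergeometric expansion of $J_{\mathscr X}$ together with the explicit orbifold Gamma class and the Euler-integral identity for $\Gamma$, and you rightly flag the twisted-sector bookkeeping as the delicate step. The paper bypasses this entirely: it quotes the mirror identity already established in \cite[Section~4.3.1]{Iri}, which for every $\phi\in H^\bullet_{orb}(\mathscr X)$ gives
\[
\big(\phi,\,S(z)z^{-\mu}z^{r_{\mathscr X}h}\hat\Gamma_{\mathscr X}\big)^{orb}_{\mathscr X}
= z^{-N/2}\int_{(\bR_{>0})^N} e^{-f(x)/z}\,\varphi(x,z)\,\frac{dx_1\cdots dx_N}{x_1\cdots x_N},
\]
with $\varphi$ the image of $\phi$ under Iritani's mirror isomorphism. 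This is exactly the computation you outline, already packaged; citing it dissolves your ``main obstacle''. Note also that your displayed formula for $s(z)$ as a single scalar integral of $e^{-f/z}\omega$ does not type-check---$s(z)$ is $H^\bullet_{orb}(\mathscr X)$-valued---so the correct statement is the componentwise one above, with the $\phi$-dependent factor $\varphi(x,z)$ in the integrand.

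For the second step the paper computes the critical points of $f$ directly (they are $r_{\mathscr X}$ nondegenerate points related by rotation, with $p_0\in(\bR_{>0})^N$ and $f(p_0)=c$), whereas you infer the same information from the eigenvalue structure of $\hat c_1$; either route suffices, and the stationary-phase conclusion is identical.
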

\begin{proof}
Consider the Laurant polynomial
\ban
f=x_1+\cdots+x_N+\frac1{x_1^{w_1}\cdots x_N^{w_N}},\quad (x_1,\cdots,x_N)\in(\bC^*)^N.
\nan
The next mirror identity follows from the argument in \cite[Section 4.3.1]{Iri}:
\ba\label{integralrepresentationbyiritani}
\bigg(\phi,S(z)z^{-\mu}z^{rh}\hat\Gamma_\mathscr X\bigg)_\mathscr X^{orb}=z^{-\frac N2}\int_{(\bR_{>0})^N}e^{-\frac{f(x)}z}\varphi(x,z)\frac{dx_1\cdots dx_N}{x_1\cdots x_N},
\na
where $z>0$, $\phi\in H^\bullet_{orb}(\mathscr X)$, and $\varphi(x,z)\in\bC[x_1^{\pm},\cdots,x_N^{\pm},z]$ is such that the class of the integrand on RHS corresponds to $\phi$ under the mirror isomorphism in \cite{Iri}. To study the oscillatory integral on RHS, we find all the critical points $p_k$ of $f$ by direct calculations:
\ban
p_k:=\bigg(\frac{w_1\xi^k}{\sqrt[r_{\mathscr X}]{(\prodl_{j=1}^Nw_j^{w_j})}},\cdots,\frac{w_N\xi^k}{\sqrt[r_{\mathscr X}]{(\prodl_{j=1}^Nw_j^{w_j})}}\bigg),\quad k=0,1,\cdots,r_{\mathscr X}-1,
\nan
where $\xi$ is a primitive $r_{\mathscr X}$th root of unity. Moreover, we can check that $f|_{(\bR_{>0})^N}$ admits a global minimum at the unique point $p_0$  with $c=f(p_0)$, satisfying  the following    properties:
\begin{enumerate}
\item every critical value $u$ of $f$ satisfies $|u|\leq c$;
\item $p_0$ is the unique critical point of $f$ contained in $f^{-1}(c)$.
\end{enumerate}
Therefore by the stationary phase approximation, we have
\ban
||e^{\frac{c}{z}}S(z)z^{-\mu}z^{r_{\mathscr X}h}\hat\Gamma_\mathscr X||=O(1),\quad z\rightarrow+0.
\nan
This implies that the flat section $S(z)z^{-\mu}z^{r_{\mathscr X}h}\hat\Gamma_\mathscr X$ is in $\mathscr A$.
\end{proof}

As for Fano manifolds, we can also interpret the principal asymptotic class of $\mathscr X$ in terms of  Givental's $J$-function of $\mathscr X$  defined by $J_\mathscr X(t)=z^{\frac N2}\big(S(z)z^{-\mu}z^{c_1(\mathscr X)}\big)^{-1}\mathbbm 1$ with $t={1\over z}$.

\begin{thm}\label{proportionforweightedprojectivespaces}
 $\mathscr X$ satisfies Gamma conjecture I, namely $\hat\Gamma_{\mathscr X}\varpropto\lim\limits_{t\rightarrow+\infty}t^{\frac{N}2}e^{-ct}J_{\mathscr X}(t)$.
\end{thm}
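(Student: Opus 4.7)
The plan is to reduce the theorem to Proposition~\ref{gammaconjIforweightedprojectivespaces} via the standard duality between the flat-section and $J$-function formulations of Gamma Conjecture~I, translating the Fano-manifold arguments of \cite{GGI, GaIr} into the orbifold setting.

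I would first establish an orbifold analogue of Proposition~\ref{propJexpansion}: namely that $J_\mathscr X(t)$ admits an asymptotic expansion
\[
J_\mathscr X(t) = C\,t^{-N/2}e^{ct}\bigl(A_\mathscr X + \alpha_1 t^{-1} + \alpha_2 t^{-2} + \cdots\bigr)\qquad\text{as } t\to +\infty,
\]
with $C\in\bC^*$ and a nonzero class $A_\mathscr X\in H^\bullet_{orb}(\mathscr X)$, unique up to a nonzero scalar. The key inputs are the characteristic polynomial $\lambda^{r_\mathscr X}-c^{r_\mathscr X}$ of $(c_1(\mathscr X)\bullet)$ (so that $c$ is the unique simple positive-real eigenvalue of largest modulus) together with the one-dimensionality of $\mathscr A$ from Proposition~\ref{smllestasymptotic}; these combine to force the prefactor $t^{-N/2}e^{ct}$ in the expansion. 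Once this is established, the limit appearing in the theorem equals $C\cdot A_\mathscr X$, so the theorem reduces to proving $A_\mathscr X \varpropto \hat\Gamma_\mathscr X$.

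For the main step of identifying $A_\mathscr X$ with $\hat\Gamma_\mathscr X$, I would exploit the symplectic/adjoint property of the fundamental solution, namely an identity on $H^\bullet_{orb}(\mathscr X)$ of the form
\[
\bigl(S(e^{\pi\sqrt{-1}}z)\alpha,\,S(z)\beta\bigr)^{orb}_\mathscr X = (\alpha,\beta)^{orb}_\mathscr X,
\]
valid in orbifold Gromov--Witten theory just as in the Fano case. Setting $T(z) = S(z)z^{-\mu}z^{c_1(\mathscr X)}$ and using $J_\mathscr X(t) = z^{N/2}T(z)^{-1}\mathbbm 1$, this identity lets one convert a pairing such as $\bigl(T(z)\hat\Gamma_\mathscr X,\,z^{-N/2}J_\mathscr X(t)\bigr)^{orb}_\mathscr X$ into a pairing involving only $\mathbbm 1$ and a suitable transform of $\hat\Gamma_\mathscr X$. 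Combining the bound $\|e^{c/z}\Phi(\hat\Gamma_\mathscr X)(z)\| = O(1)$ from Proposition~\ref{gammaconjIforweightedprojectivespaces} with the expansion above and taking the limit $z\to+0$ (equivalently $t\to+\infty$) yields both $(\hat\Gamma_\mathscr X, A_\mathscr X)^{orb}_\mathscr X\neq 0$ and the desired proportionality $A_\mathscr X \varpropto \hat\Gamma_\mathscr X$ by matching leading terms.

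The hard part will be this second step. While the symplectic/adjoint identity for $S(z)$ is classical for Fano manifolds, transferring it to orbifold cohomology $H^\bullet_{orb}(\mathscr X)$ requires care with the Chen--Ruan grading $\mu$ and the twisted sectors $\mathscr X_f$ for $f \in F\setminus\{0\}$. Moreover, to deduce the actual proportionality (rather than a mere asymptotic equivalence up to lower-order corrections) one must track the polynomial prefactors $z^{N/2}$ and the logarithmic corrections coming from $z^{-\mu}$ through a Birkhoff/WKB-type factorization of the fundamental solution at the irregular singularity $z=0$. Conceptually the argument parallels the Fano-manifold proof in \cite{GGI}, but the detailed bookkeeping in the orbifold setting is where most of the work lies.
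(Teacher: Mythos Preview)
Your overall strategy---reduce to Proposition~\ref{gammaconjIforweightedprojectivespaces} via the duality between the flat-section and $J$-function formulations---is exactly what the paper does. The paper phrases the duality through the dual connection $\nabla^\vee$ on $H^\bullet_{orb}(\mathscr X)^\vee$ and its fundamental solution $\Phi^\vee$, rather than through the symplectic identity for $S(z)$; these are equivalent packagings, and the paper's computation of $\lim_{t\to+\infty}\langle\alpha,t^{N/2}e^{-ct}J_\mathscr X(t)\rangle=\langle\alpha,C\,(2\pi)^{-N/2}\hat\Gamma_\mathscr X\rangle$ follows \cite[\S3.5--3.6]{GGI} just as you anticipate. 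The orbifold bookkeeping you flag as the ``hard part'' is in fact routine.

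The genuine gap in your proposal is the nonvanishing. You assert that your step~1 produces an expansion with $C\neq 0$ and that step~2 ``yields $(\hat\Gamma_\mathscr X,A_\mathscr X)^{orb}_\mathscr X\neq 0$'', but neither claim is justified: knowing that $c$ is a simple eigenvalue of maximal modulus does \emph{not} by itself force the exponential growth rate of $J_\mathscr X$ to be $c$ rather than something smaller, nor does the symplectic identity alone give nonvanishing of the pairing. The paper isolates this as a separate step and settles it by an algebraic argument in the quantum ring: with $\phi_0\in E_c$ the limit of $e^{c/z}\Phi(\hat\Gamma_\mathscr X)(z)$ and $C=\langle\phi_0^\vee,\mathbbm 1\rangle$, one decomposes $\mathbbm 1=C\phi_0+(c\gamma-c_1(\mathscr X)\bullet\gamma)$ and then multiplies by $\phi_0$ to obtain $\phi_0=C\,\phi_0\bullet\phi_0$, forcing $C\neq 0$. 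This use of the ring structure (that $\mathbbm 1$ is the identity and $\phi_0$ an eigenvector) is the key idea you are missing; without it your argument shows at best $\lim t^{N/2}e^{-ct}J_\mathscr X(t)\in\bC\cdot\hat\Gamma_\mathscr X$, which is compatible with the limit being zero.
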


\begin{proof}
Denote by $\<\cdot,\cdot\>$ the natural pairing  between $H_{orb}^\bullet(\mathscr X)^\vee$ and $H_{orb}^\bullet(\mathscr X)$, where $H_{orb}^\bullet(\mathscr X)^\vee$ is the dual space of $H_{orb}^\bullet(\mathscr X)$.
The dual connection $\nabla^\vee$ of $\nabla$ is a meromorphic connection on the trivial   $H_{orb}^\bullet(\mathscr X)^\vee$-bundle over $\bP^1$ given by
\ban
\nabla_{z\partial_z}^\vee=z\partial_z+\frac1z(c_1(\mathscr X)\bullet)^\vee-\mu^\vee,
\nan
where $(c_1(\mathscr X)\bullet)^\vee$ and $\mu^\vee$ are dual maps of $(c_1(\mathscr X)\bullet)$ and $\mu$ respectively. Here $\nabla^\vee$ is dual to $\nabla$ in the sense that
\ba\label{dual}
d\<f(z),s(z)\>=\<\nabla^\vee f(z),s(z)\>+\<f(z),\nabla s(z)\>.
\na
For the space of $\nabla^\vee$-flat sections over $\bR_{>0}$, we have the following isomorphism:
\ban
\Phi^\vee:H^\bullet_{orb}(\mathscr X)^\vee&\xrightarrow{\cong}&\{f:\bR_{>0}\to H^\bullet_{orb}(\mathscr X)^\vee:\nabla^\vee f=0\};\\
\alpha&\mapsto&(2\pi)^{\frac N2}((S(z)z^{-\mu}z^{c_1(\mathscr X)})^{-1})^\vee\alpha.
\nan

By  Proposition \ref{smllestasymptotic}, we obtain a nonzero element $\phi_0$ in $E_c$ defined by
\ban
\phi_0:=\lim\limits_{z\to+0}e^{\frac cz}\Phi(\hat\Gamma_\mathscr X)(z)\in E_c.
\nan
 Let $E_c^\vee$ be the eigenspace of $(c_1(\mathscr X)\bullet)^\vee$ in $H_{orb}^\bullet(\mathscr X)^\vee$ with eigenvalue $c$. Then $E_c^\vee$ can be viewed naturally as the dual space of $E_c$, since   the pairing $\<\cdot,\cdot\>|_{E_c^\vee\times E_c}$ is nondegenerate by direct verification. Let $\phi_0^\vee\in E_c^\vee$ be the dual of $\phi_0$ in the sense that
\ban
\<\phi_0^\vee,\phi_0\>=1.
\nan
For any $\alpha\in H^\bullet_{orb}(\mathscr X)^\vee$, we   obtain the following  formula (cf. \cite[Proposition 3.6.2]{GGI}):
\ba\label{leadingtermofdualflatsection}
\lim\limits_{z\to+0}e^{-\frac cz}\Phi^\vee(\hat\Gamma_\mathscr X)(z)=\<\alpha,\hat\Gamma_\mathscr X\>\phi_0^\vee,
\na
by using the parallel disccusions to that  in \cite[Section 3.5, Section 3.6]{GGI}. Note that
\ban
t^{\frac{N}2}e^{-ct}J_{\mathscr X}(t)=(2\pi)^{-\frac N2}e^{-\frac cz}\Phi^{-1}(\mathbbm1)(z)\textrm{ with }t=\frac 1z.
\nan
Let $C=\<\phi_0^\vee,\mathbbm1\>$. For any $\alpha\in H^\bullet_{orb}(\mathscr X)^\vee$, we have
\ban
 \lim\limits_{t\to+\infty}\<\alpha,t^{\frac{N}2}e^{-ct}J_{\mathscr X}(t)\>
&=&\lim\limits_{z\to+0}\<\alpha,(2\pi)^{-\frac N2}e^{-\frac cz}\Phi^{-1}(\mathbbm1)(z)\>\\
&=&\lim\limits_{z\to+0}\<\Phi^\vee(\alpha)(z),(2\pi)^{-\frac N2}e^{-\frac cz}\mathbbm1\>\\
&=&\lim\limits_{z\to+0}\<e^{-\frac cz}\Phi^\vee(\alpha)(z),(2\pi)^{-\frac N2}\mathbbm1\>\\
&=&\<\alpha,C\cdot (2\pi)^{-\frac N2}\hat\Gamma_\mathscr X\>.
\nan
  Here  the second (resp. third) equality follows from   \eqref{dual} (resp. \eqref{leadingtermofdualflatsection}). This implies that
\ban
\lim\limits_{t\rightarrow+\infty}t^{\frac{N}2}e^{-ct}J_{\mathscr X}(t)=C\cdot(2\pi)^{-\frac N2}\hat\Gamma_\mathscr X.
\nan
So we only need to show that $C\neq0$.

Note that $H':=\mbox{Image}(c-(c_1(\mathscr X)\bullet))$ is a complementary subspace of $E_c$ in $H_{orb}^\bullet(\mathscr X)$. Therefore there exists  $\gamma\in H_{orb}^\bullet(\mathscr X)$ such that
\ban
\mathbbm1=C'\phi_0+(c\gamma-c_1(\mathscr X)\bullet\gamma)\textrm{ for some }C'\in\bC.
\nan
Moreover,
\ban
C&=&\<\phi_0^\vee,\mathbbm1\>\\
&=&C'+\<\phi_0^\vee,c\gamma\>-\<\phi_0^\vee,c_1(\mathscr X)\bullet\gamma\>\\
&=&C'+\<\phi_0^\vee,c\gamma\>-\<(c_1(\mathscr X)\bullet)^\vee\phi_0^\vee,\gamma\>\\
&=&C'+\<\phi_0^\vee,c\gamma\>-\<c\phi_0^\vee,\gamma\>\\
&=&C'.
\nan
Therefore,
\ban
\phi_0&=&C\phi_0\bullet\phi_0+\phi_0\bullet(c\gamma-c_1(\mathscr X)\bullet\gamma)\\
&=&C\phi_0\bullet\phi_0+c\phi_0\bullet\gamma-c_1(\mathscr X)\bullet\phi_0\bullet\gamma\\
&=&C\phi_0\bullet\phi_0+c\phi_0\bullet\gamma-c\phi_0\bullet\gamma\\
&=&C\phi_0\bullet\phi_0.
\nan
Since $\phi_0\neq 0$, it follows that   that $C\neq0$. This finishes the proof.
\end{proof}

Now we discuss   hypersurfaces in $\mathscr X$ by   the quantum Lefschetz principle for orbifolds.

\begin{prop}\label{QLPinweightedprojectivespace}
Let $Y$ be a smooth Fano variety given as a hypersurface in $\mathscr X$ defined by a section of $\cO(d)$ for a positive integer $d$. Assume that $1+w_1+\cdots+w_N-d>0$, that $w_i\mid d$ for $1\leq i\leq N$, and that the primitive part of $J_Y(t)$ vanishes. Then
\ban\label{GammeOrbi}
\hat\Gamma_Y\varpropto\lim\limits_{t\rightarrow+\infty}t^{\frac{\dim Y}2}e^{-C_Yt}J_Y(t)
\nan
for some constant $C_Y$.
\end{prop}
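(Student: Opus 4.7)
The plan is to mimic the proof of Proposition \ref{QLP} in the orbifold setting, with $\mathscr X$ playing the role of the ambient Fano manifold and Theorem \ref{proportionforweightedprojectivespaces} replacing the Gamma conjecture I input for $X$. The conditions $w_i \mid d$ ensure that a generic section of $\cO(d)$ cuts out a smooth subvariety disjoint from the non-trivial isotropy loci, so that $\iota: Y \hookrightarrow \mathscr X$ restricts orbifold cohomology classes supported on the untwisted sector $\mathscr X_0 = \mathscr X$ to ordinary cohomology classes of the smooth manifold $Y$; together with the Fano condition $r_\mathscr X - d > 0$ (which ensures $-K_Y = (r_\mathscr X - d)h|_Y$ is ample) this is what makes the Lefschetz-type analysis go through.

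First I would write $J_\mathscr X(t) = e^{r_\mathscr X h \log t} \sum_{n \geq 0} J_{r_\mathscr X n} t^{r_\mathscr X n}$ where only the untwisted-sector part matters for the restriction (this is where the hypothesis that the primitive part of $J_Y(t)$ vanishes enters, forcing $J_Y$ to take values in the ambient part $\iota^* H^\bullet(\mathscr X)$). Then, applying the orbifold quantum Lefschetz principle \cite{CoGi, Lee} adapted to weighted projective spaces (with $w_i \mid d$ guaranteeing the twisting bundle $\cO(d)|_Y$ has integer weights fiberwise), I expect
\begin{equation*}
J_Y(t) = e^{(r_\mathscr X - d)h \log t - C_0 t} \sum_{n=0}^{\infty} \frac{\Gamma(1 + dh + dn)}{\Gamma(1+dh)} (\iota^* J_{r_\mathscr X n}) t^{(r_\mathscr X - d)n}
\end{equation*}
for some constant $C_0$ coming from the hypergeometric modification in the Fano-index-jump case.

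Next, I would introduce $\tilde J(t) = t^{N/2} e^{-ct} J_\mathscr X(t)$, $T_0 = \max_{q \geq 0}\{-q + c q^{d/r_\mathscr X}\}$, and $C_Y = T_0 - C_0$, and express $t^{\dim Y/2} e^{-C_Y t} J_Y(t)$ as a Laplace-type integral
\begin{equation*}
\frac{\sqrt{t}}{\Gamma(1+dh)} \int_0^\infty q^{-\frac{d\dim \mathscr X}{2r_\mathscr X}} e^{-(q - c q^{d/r_\mathscr X} + T_0)t} \tilde J(t q^{d/r_\mathscr X}) \, dq,
\end{equation*}
exactly parallel to the manifold case. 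By Theorem \ref{proportionforweightedprojectivespaces}, $\tilde J(t)$ converges to a nonzero scalar multiple of $\hat\Gamma_\mathscr X$ as $t \to +\infty$. A stationary phase computation at the unique maximizer of $-q + c q^{d/r_\mathscr X}$ on $\bR_{>0}$ then shows the limit on the left is proportional to $\iota^* \hat\Gamma_\mathscr X / \Gamma(1+dh)$. Invoking the adjunction formula $\hat\Gamma_Y = \iota^* \hat\Gamma_\mathscr X / \Gamma(1+dh)$ (valid because $w_i \mid d$ makes the normal bundle a genuine line bundle on $Y$) yields the conclusion.

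The main obstacle will be a careful justification of the orbifold quantum Lefschetz formula above: one must check that the contributions from twisted sectors $\mathscr X_f$ ($f \neq 0$) do not intrude when computing $\iota^*$, which is precisely ensured by $w_i \mid d$ combined with the vanishing of the primitive part of $J_Y$. A secondary technical point is justifying termwise the stationary phase estimate for the Laplace integral, where one needs uniform control on the tail of $\tilde J$ beyond its leading asymptotics; this follows from the same integral representation \eqref{integralrepresentationbyiritani} used in the proof of Theorem \ref{proportionforweightedprojectivespaces}, applied to all components of $\tilde J$ rather than just its leading term.
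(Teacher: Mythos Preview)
Your proposal is correct and follows essentially the same route as the paper's proof: expand $J_{\mathscr X}$, apply an orbifold quantum Lefschetz formula to express $J_Y$ as a hypergeometric modification, rewrite $t^{\dim Y/2}e^{-C_Yt}J_Y(t)$ as a Laplace-type integral in $\tilde J$, and conclude via stationary phase together with Theorem \ref{proportionforweightedprojectivespaces} and the adjunction identity for $\hat\Gamma_Y$. The only notable difference is bookkeeping: the paper introduces the explicit projection $pr:H^\bullet_{orb}(\mathscr X)\to H^\bullet(\mathscr X)$ and writes $\iota^*pr^*J_{r_{\mathscr X}n}$ (and $\iota^*pr^*\hat\Gamma_{\mathscr X}$) where you write $\iota^*J_{r_{\mathscr X}n}$ with the remark that ``only the untwisted-sector part matters''; and for the quantum Lefschetz step the paper invokes \cite[Corollary 1.9]{CCLT} and \cite[Proposition D.9]{CCGK} rather than the manifold references \cite{Lee,CoGi}.
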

\begin{proof}
The argument is   similar to that for  \cite[Theorem 8.3]{GaIr} again.  Write
\ban
J_\mathscr X(t)=e^{r_{\mathscr X}h\log t}\suml_{n=0}^\infty J_{r_{\mathscr X}n}t^{r_{\mathscr X}n},\textrm{ with }J_{r_{\mathscr X}n}\in H_{orb}^\bullet(\mathscr X).
\nan
Since the primitive part of $J_Y(t)$ vanishes,  $J_Y(t)$ takes values in $\iota^*H^\bullet(\mathscr X)$. It follows from \cite[Corollary 1.9]{CCLT} and the discussion in the proof of \cite[Proposition D.9]{CCGK} that
\ba\label{QLPforYinweightedprojectivespaces}
J_Y(t)=e^{(r_{\mathscr X}-d)h\log t-C_0t}\suml_{n=0}^\infty\frac{\Gamma(1+dh+dn)}{\Gamma(1+dh)}(\iota^*pr^*J_{r_{\mathscr X}n})t^{(r_{\mathscr X}-d)n},
\na
where $C_0$ is some constant determined by $Y$, and $pr:H^\bullet_{orb}(\mathscr X)\rightarrow H^\bullet(\mathscr X)$ is the natural projection. Set
$$\tilde J(t)=t^{\frac{N}2}e^{-ct}J_\mathscr X(t),\quad
T_0=\max\limits_{q\geq0}\{-q+cq^{{d\over r_{\mathscr X}}}\}\quad\mbox{and}\quad C_Y=T_0-C_0.
$$
 We have
\ban
t^{\frac{\dim Y}2}e^{-C_Yt}J_Y(t)=\frac{\sqrt t}{\Gamma(1+dh)}\int_0^\infty q^{-\frac{dN}{2r_{\mathscr X}}}e^{-(q-cq^{{d\over r_{\mathscr X}}}+T_0)t}\tilde J(tq^{{ d\over r_{\mathscr X}}})dq.
\nan
Again using the stationary phase approximation as in the proof of \cite[Theorem 8.3]{GaIr}, we conclude  that RHS of \eqref{GammeOrbi} is proportional to $\frac{\iota^*pr^*\hat\Gamma_\mathscr X}{\Gamma(1+dh)}$. Now the required result follows from the next equality:
\ban
\hat\Gamma_Y=\frac{\iota^*pr^*\hat\Gamma_\mathscr X}{\Gamma(1+dh)}=\frac{\Gamma(1+h)\prodl_{i=1}^N\Gamma(1+w_ih)}{\Gamma(1+dh)}.
\nan
which is obtained  by the adjunction formula.
\end{proof}

\begin{cor}\label{conjOimpliesgammaconjIinweightedprojectivespaces}
Let $Y$ be as in the assumptions of Proposition \ref{QLPinweightedprojectivespace}. If $Y$ satisfies Property $\cO$, then $Y$ satisfies Gamma conjecture I.
\end{cor}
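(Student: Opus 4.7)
The plan is to combine Proposition \ref{QLPinweightedprojectivespace} with Proposition \ref{propJexpansion} and to pin down the a priori unknown constant $C_Y$ by matching exponential growth rates. Since $Y$ satisfies the hypotheses of Proposition \ref{QLPinweightedprojectivespace}, the latter yields a constant $C_Y$ together with the proportionality
\[
\hat\Gamma_Y \,\varpropto\, \lim_{t\to+\infty} t^{\frac{\dim Y}{2}} e^{-C_Y t} J_Y(t),
\]
and the limit is automatically a nonzero class since $\hat\Gamma_Y\neq 0$.

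Next, I would invoke Property $\cO$ for $Y$ and apply Proposition \ref{propJexpansion}, which gives an asymptotic expansion of the form
\[
J_Y(t) \,=\, C\, t^{-\frac{\dim Y}{2}} e^{\rho t}\bigl(A_Y + \alpha_1 t^{-1} + \alpha_2 t^{-2} + \cdots\bigr)
\]
as $t\to+\infty$, where $C\neq 0$, $\rho=\rho(\hat c_1)$ is the spectral radius on $QH^\bullet(Y)$, and $A_Y$ is a principal asymptotic class of $Y$. Plugging this into the limit produced by Proposition \ref{QLPinweightedprojectivespace} gives
\[
t^{\frac{\dim Y}{2}} e^{-C_Y t} J_Y(t) \,=\, C\, e^{(\rho-C_Y)t}\bigl(A_Y + O(t^{-1})\bigr),
\]
which tends to $0$ when $C_Y>\rho$, diverges when $C_Y<\rho$, and otherwise converges to $C\cdot A_Y$. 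Because the limit is a nonzero finite element of $H^\bullet(Y)$, the two exponential rates must match, forcing $C_Y=\rho$ and identifying the limit with $C\cdot A_Y$.

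Combining this identification with the proportionality from Proposition \ref{QLPinweightedprojectivespace}, I would conclude
\[
\hat\Gamma_Y \,\varpropto\, C\cdot A_Y \,\varpropto\, A_Y,
\]
which is exactly Gamma conjecture I for $Y$. There is essentially no obstacle here, since all of the analytic content has already been packaged into Propositions \ref{QLPinweightedprojectivespace} and \ref{propJexpansion}; the corollary amounts to the observation that the otherwise unspecified constant $C_Y$ is pinned down to the spectral radius by the finiteness and non-vanishing of the limit, after which the two proportionality statements chain together.
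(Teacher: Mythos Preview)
Your proposal is correct and follows the same route the paper takes: in the analogous Proposition \ref{QLP} the paper dispatches the ``In particular'' clause with the single sentence ``If $Y$ satisfies Property $\mathcal{O}$, then $Y$ satisfies Gamma conjecture I by Proposition \ref{propJexpansion},'' and Corollary \ref{conjOimpliesgammaconjIinweightedprojectivespaces} is stated without proof as the identical consequence of Proposition \ref{QLPinweightedprojectivespace}. You have simply made explicit the exponential-rate matching that forces $C_Y=\rho$, which the paper leaves implicit.
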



\begin{prop}\label{X78}
$X_7$ and $X_8$ satisfy Gamma conjecture I.
\end{prop}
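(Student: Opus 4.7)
The plan is to apply Corollary \ref{conjOimpliesgammaconjIinweightedprojectivespaces} to the embeddings $X_7\hookrightarrow\bP(1,1,1,2)$ and $X_8\hookrightarrow\bP(1,1,2,3)$ as hypersurfaces of degrees $4$ and $6$, respectively, as recalled in section \ref{geometricinterpretation}.

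The numerical hypotheses of Proposition \ref{QLPinweightedprojectivespace} are easy to verify: for $X_7$, $r_{\mathscr X}-d=5-4=1>0$ and each $w_i\in\{1,1,2\}$ divides $4$; for $X_8$, $r_{\mathscr X}-d=7-6=1>0$ and each $w_i\in\{1,2,3\}$ divides $6$. Property $\cO$ for $X_7$ and $X_8$ has already been established in Theorem \ref{thmconjO}.

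The remaining and principal step is to check that the primitive part of $J_{X_r}(t)$ vanishes for $r\in\{7,8\}$. Since $\dim X_r=2<3$, the dimensional argument of Lemma \ref{primitivevanishing} is unavailable, and this is precisely where Theorem \ref{vanishingproperty} is designed to take over. I would fix a homogeneous basis $\{\phi_i\}$ of $H^*(X_r)$ adapted to the orthogonal decomposition $H^*(X_r)=\mbox{Amb}_r\oplus\mbox{Prim}_r$ (the ambient part $\iota^*H^\bullet(\mathscr X)$ coincides with $\mbox{Amb}_r$ under both embeddings, since $c_1(X_r)$ is a nonzero multiple of $\iota^*h$); the Poincar\'e-dual basis $\{\phi^i\}$ is then likewise adapted. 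Because $c_1(X_r)\cup\gamma=0$ for every $\gamma\in\mbox{Prim}_r$, the prefactor $e^{c_1(X_r)\log t}$ acts as the identity on $\mbox{Prim}_r$, so that the primitive part of $J_{X_r}(t)$ coincides with the primitive part of the series in parentheses. After expanding $(1-c_1(\mathcal{L}_1))^{-1}$ as a geometric series, the contribution to the primitive part from each $\phi_i\in\mbox{Prim}_r$ has the form $\sum_{\mathbf{d},a}\langle\tau_a(\phi_i)\rangle_{\mathbf{d}}\,t^{\int_{\mathbf{d}}c_1(X_r)-a}$; grouping terms by the value of $a$ and discarding the $\mathbf{d}=\mathbf{0}$ contribution by Lemma \ref{degreezero}, the problem reduces for each $a$ to the vanishing of $\sum_{\mathbf{d}}\langle\tau_a(\phi_i)\rangle_{\mathbf{d}}$ with $\phi_i\in\mbox{Prim}_r$, which is exactly the $m=0$ case of Theorem \ref{vanishingproperty}.

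With the primitive-part vanishing in hand, Corollary \ref{conjOimpliesgammaconjIinweightedprojectivespaces} yields the conclusion. The sole substantive obstacle is the primitive-part vanishing itself, which is not accessible by naive dimension counting and is supplied by the Cremona-type symmetry results encoded in Theorem \ref{vanishingproperty}.
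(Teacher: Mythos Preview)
Your proposal is correct and follows exactly the paper's route: realize $X_7$ and $X_8$ as smooth hypersurfaces of degrees $4$ and $6$ in $\bP(1,1,1,2)$ and $\bP(1,1,2,3)$, use Theorem \ref{vanishingproperty} (the $m=0$ case) to kill the primitive part of $J_{X_r}(t)$, and then apply Corollary \ref{conjOimpliesgammaconjIinweightedprojectivespaces} together with Property $\cO$ from Theorem \ref{thmconjO}. One harmless slip: in the paper's convention the exponent of $t$ in the $J$-function expansion is $\int_{\mathbf d}c_1(X_r)$, not $\int_{\mathbf d}c_1(X_r)-a$; your grouping is still valid because the Degree Axiom forces $a=\int_{\mathbf d}c_1(X_r)-1$ for $\phi_i\in\mbox{Prim}_r$, so fixing $a$ is the same as fixing the power of $t$.
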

\begin{proof}
The del Pezzo surface  $X_7$ can be realized as a smooth hypersuface in $\bP(1,1,1,2)$ defined by a section of $\cO(4)$. By Theorem \ref{vanishingproperty}, the primitive part of $J_{X_7}(t)$ vanishes. So Gamma conjecture I holds for $X_7$ by  Theorem \ref{proportionforweightedprojectivespaces}, Proposition  \ref{QLPinweightedprojectivespace} and Corollary \ref{conjOimpliesgammaconjIinweightedprojectivespaces}.

The proof for $X_8$ is similar.
\end{proof}

In a summary,   we achieve  Theorem \ref{thmGammconjI} by combining Propositions \ref{X123}, \ref{X456} and \ref{X78}.

\section*{Acknowledgements}

The authors would like to thank Kwok Wai Chan, Xiaowen Hu, Yuan-Pin Lee and Bong H. Lian   for  useful discussions, and to thank Yuri Prokhorov and Mao Sheng for the helpful discussions on interpreting toric del Pezzo surfaces as complete intersections in product of projective spaces.
 Hu is partially supported by NSFC Grants 11831017 and 11771460. Ke is partially supported by NSFC Grants 11831017, 11521101 and 11601534. Li  is partially supported by NSFC Grants 11822113, 11831017 and Guangdong Introducing Innovative and Enterpreneurial Teams No. 2017ZT07X355.


\begin{thebibliography}{99}


\bibitem[Au]{Auroux} D. Auroux,\,{\it Mirror symmetry and $T$-duality in the complement of an
anticanonical divisor}, J. G\"okova Geom. Topol. GGT 1 (2007), 51--91.

\bibitem[BaPo]{BaPo}V. Batyrev and O. Popov, \emph{The Cox ring of a del Pezzo surface}, Arithmetic of higher-dimensional algebraic varieties (Palo Alto, CA, 2002), Vol. 226 of Progr. Math., Birkh\"auser Boston, Boston, MA (2004), 85--103.
\bibitem[BaMa]{BaMa}A. Bayer and Y.I. Manin,\,{\it (Semi)simple exercises in quantum cohomology}, The Fano Conference, 143--173, Univ. Torino, Turin, 2004.
\bibitem[BFSS]{BFSS}L. Bones, G. Fowler, L. Schneider  and R. Shifler,\,{\it Conjecture $\mathcal{O}$ holds for some horospherical varieties of Picard rank $1$}, preprint.
\bibitem[Che]{Che} D. Cheong,\,{\it Quantum multiplication operators for Lagrangian and orthogonal Grassmannians}, J. Algebraic Combin., 45 (2017), no.4, 1153--1171.
 \bibitem[ChLi]{ChLi}D. Cheong and C. Li,\,{\it On the conjecture $\mathcal{O}$ of  {GGI} for {$G/P$}}, Adv. Math.  {306} (2017), 704--721.

\bibitem[CCGK]{CCGK}T. Coates, A. Corti, S. Galkin and A. Kasprzyk,\,{\it Quantum periods for $3$-dimensional Fano manifolds}, Geom. Topol. 20(2016), 103--256.

\bibitem[CCLT]{CCLT}T. Coates, A. Corti, Y.-P. Lee  and H.-H. Tseng, {\em The quantum orbifold cohomology of weighted projective spaces}, Acta Math. 202(2009), 139--193.

\bibitem[CoGi]{CoGi}T. Coates and A. Givental, {\em Quantum Riemann-Roch, Lefschetz and Serre}, Ann. of Math. (2) 165(2007), no. 1, 15--53.



\bibitem[Cord]{Cord}J. Cordovez, \emph{Families of lines in Fano varieties complete intersection in a Grassmannian}, Matematiche (Catania) 57 (2002), no. 1, 131--147.

 

\bibitem[CDG]{CDG}G. Cotti, B. Dubrovin and D. Guzzetti,\,{\it Helix Structures in Quantum Cohomology of Fano Varieties},
                      preprint at arXiv: math.AG/1811.09235.
\bibitem[CoKa]{CoKa}D.A. Cox and S. Katz,\,{Mirror symmetry and algebraic geometry}, Mathematical Surveys and Monographs, 68. American Mathematical Society, Providence, RI, 1999.

\bibitem[CrMi]{CrMi}B. Crauder and R. Miranda, \emph{Quantum cohomology of rational surfaces}, The moduli space of curves (Texel Island, 1994), 33--80,
Progr. Math., 129, Birkh\"auser Boston, Boston, MA, 1995.



\bibitem[Du1]{Du1998}B. Dubrovin, \emph{Geometry and analytic theory of Frobenius manifolds},
In {\em Proceedings of the International Congress of Mathematicians,
Vol. II (Berlin, 1998)}, 315--326.


\bibitem[Du2]{Du2013}B. Dubrovin \emph{Quantum cohomology and isomonodromic deformation},
Lecture at ``Recent Progress in the Theory of Painlev\'{e} Equations: Algebraic, asymptotic and topological aspects", Strasbourg, November 2013.


 \bibitem[ElSz]{ElSz} A. Elhashash and D. B. Szyld,\,{\it On general matrices having the Perron-Frobenius property}, Electronic Journal of Linear Algebra, 389--413, 2008.

\bibitem[Frob]{Frob}G. Frobenius,{\em \"Uber Matrizen aus nicht negativen Elementen}, S.-B. Presuss. Akad. Wiss.(Berlin) 456--477.


\bibitem[Ga1]{Gal}S. Galkin, {\it The conifold point}, preprint at arXiv: math.AG/1404.7388.

\bibitem[GaGo]{GaGo}S. Galkin   and V.   Golyshev,\,{\it  Quantum cohomology of Grassmannians, and cyclotomic fields}, Russian Math. Surveys 61 (2006), no. 1, 171--173


\bibitem[GGI]{GGI}S. Galkin, V. Golyshev and H. Iritani,\,{\it Gamma classes and quantum cohomology of Fano manifolds: gamma conjectures}, Duke Math. J. 165 (2016), no. 11, 2005--2077.
\bibitem[GaIr]{GaIr}S. Galkin and  H. Iritani, \, {\it Gamma conjecture via mirror symmetry}, preprint at arxiv: math.AG/1508.00719.
\bibitem[Gi1]{Gi1}A. Givental, {\em Homological geometry and mirror symmetry}, In: Proceedings of the ICM, Z\"urich, 1994, Birkh\"auser, Basel, 1995, vol 1, 472--480.

\bibitem[Gi2]{Gi2}A. Givental, {\em A mirror theorem for toric complete intersections}, Topological field theory, primitive forms and related topics, (Kyoto, 1996),  141--175, Progr. Math., 160, Birkh\"auser, Boston, MA, 1998.

\bibitem[GoZa]{GoZa}V. Golyshev and D. Zagier,\,{\it Proof of the gamma conjecture for Fano 3-folds with a Picard lattice of rank one},   Izv. Math. 80 (2016), no. 1, 24--49.
 \bibitem[G\"oPa]{GoPa}L. G\"ottsche and R. Pandharipande, \emph{The quantum cohomology of blow-ups of $\bP^2$ and enumerative geometry},  J. Differential Geom. 48(1998), 61--90.

\bibitem[Hoso]{Hoso} S. Hosono,\,{\it Central charges, symplectic forms, and hypergeometric series in local mirror symmetry}, Mirror Symmetry V, Proceedings of BIRS workshop on Calabi-Yau Varieties and Mirror Symmetry,   2003.
 \bibitem[HKTY]{HKTY} S.  Hosono, A. Klemm, S. Theisen and S.-T. Yau,\,{\it Mirror symmetry, mirror map and applications to complete intersection Calabi-Yau spaces}, Nuclear Phys. B 433 (1995), no. 3, 501--552.
\bibitem[Hu]{Hu} J. Hu,\,{\it Gromov-Witten invariants of blow-ups along points and curves}, Math. Z. 233(2000), 709--739.

\bibitem[Iri]{Iri}H. Iritani,\,{\it An integral structure in quantum cohomology and mirror symmetry for toric orbifolds}, Adv. Math. 222 (2009), no. 3, 1016--1079.

\bibitem[IsPr]{IsPr}V.A. Iskovskikh and Yu.G. Prokhorov, \,{\it  Fano varieties},  Algebraic geometry, V,  Encyclopaedia Math. Sci., 47, Springer, Berlin, 1999.

\bibitem[Jer]{Jer}Y. Jerby,\, \emph{On exceptional collections of line bundles and mirror symmetry for toric del-Pezzo surfaces},
J. Math. Phys. 58 (2017), no. 3, 19 pp.
\bibitem[Ke]{Ke}H.-Z. Ke,\,{\it On Conjecture O for projective complete intersections} preprint at arxiv: math.AG/1809.10869.


\bibitem[KoMo]{KoMo} J. Koll\'ar and S. Mori, {\em Birational geometry of algebraic varieties},  Cambridge Tracts in Mathematics, 134. Cambridge University Press, Cambridge, 1998.

\bibitem[Lee]{Lee} Y.-P. Lee, {\em Quantum Lefschetz hyperplane theorem}, Invent. Math. 145(2001), no. 1, 121--149.

\bibitem[LePa]{LePa} Y.-P. Lee and R. Pandharipande, {\em A reconstruction theorem in quantum cohomology and quantum $K$-theory}, Amer. J. Math. 126(2004), 1367--1379.



\bibitem[LMS]{LMS} C. Li, L. C. Mihalcea and R. Shifler,\,{\it Conjecture $\mathcal{O}$ holds for the odd symplectic Grassmannian}, preprint at arxiv: math.AG/1706.00744.
\bibitem[Libg]{Libg}A. Libgober,\,{\it Chern classes and the periods of mirrors}, Math. Res. Lett. 6 (1999), no. 2, 141--149.

\bibitem[Lu]{Lu}R. Lu,\,{\it  The $\hat \Gamma$-genus and a regularization of an $\mathbb{S}^1$-equivariant Euler class}, J. Phys. A 41 (2008), no. 42, 425204 (13 pp).

\bibitem[Manin]{Manin}Y. I. Manin,\, {\it Cubic forms: algebra, geometry, arithmetic},   North-Holland Mathematical Library, Vol. 4,  American Elsevier Publishing Co., New York, 1974.

\bibitem[Minc]{Minc} H. Minc, {\em Nonnegative Matrices}, John Willy \& Sons, 1988.


 \bibitem[Perr]{Perr} O. Perron,\,{\it Zur Theorie der  Matrices}, Math. Ann. 64 (1907), no. 2, 248--263.
\bibitem[Rie]{Riet} K. Rietsch,  {\it Quantum cohomology rings of
Grassmannians and total positivity}, Duke Math. J. (2001)  110, no. 3, 523--553.
\bibitem[SaSh]{SaSh}F. Sanda, Y. Shamoto,\,{\it An analogue of Dubrovin's conjecture}, preprint at arxiv: math.AG/1705.05989.

\bibitem[Sene]{Sene}E. Seneta,\,{\it Non-negative matrices and Markov Chains},   Springer Series in Statistics. Springer, New York, 2006.


\bibitem[Wit]{With} C. Withrow,\,{\it The moment graph for Bott-Samelson varieties and applications to quantum
cohomology}, preprint at arxiv: math.AG/1808.09302.
\bibitem[Yang]{Yang}T. Yang,\,{\it Generalized Perron-Frobenius theorem and its applications},   Bachelor Thesis (in Chinese)-- Sun Yat-sen University, 2017.





\end{thebibliography}
\end{document}